\documentclass[12pt]{article}
\topmargin -1.5cm \textwidth 16cm \textheight 22cm \hoffset-1cm

\usepackage{amsmath}
\usepackage{amssymb}
\usepackage{amsthm}
\usepackage{extarrows}

\newtheorem{theorem}{Theorem}

\newtheorem{lemma}{Lemma}
\newtheorem{definition}{Definition}

\newtheorem{proposition}{Proposition}

\newcommand{\Mj}{\mathcal{M}_j}
\newcommand{\Mjj}{\mathcal{M}_{j-1}}
\newcommand{\Hj}{\mathcal{H}_j}
\newcommand{\Rm}{\mathbb{R}^m}
\newcommand{\C}{\mathbb{C}}
\newcommand{\Clm}{\mathcal{C}l_m}
\newcommand{\Ho}{\mathcal{H}_1}
\newcommand{\DD}{\mathcal{D}_{1,k}}
\newcommand{\Do}{\mathcal{D}_{1,2n-1}}
\newcommand{\De}{\mathcal{D}_{1,2n}}
\newcommand{\scs}{\mathcal{S}}
\newcommand{\Mo}{\mathcal{M}_1}

\begin{document}

\title{Higher Order Fermionic and Bosonic Operators}

\author{Chao Ding$^1$\thanks{These authors contributed equally to this manuscript.}\space\thanks{Electronic address:  {\tt dchao@uark.edu}.}, Raymond Walter$^{1,2 *}$\thanks{Electronic address:  {\tt rwalter@email.uark.edu}; R.W. acknowledges this material is based upon work supported by the National Science Foundation Graduate Research Fellowship Program under Grant No. DGE-0957325 and the University of Arkansas Graduate School Distinguished Doctoral Fellowship in Mathematics and Physics.}, and John Ryan$^1$\thanks{Electronic address: {\tt jryan@uark.edu.}} \\
\emph{\small $^1$Department of Mathematics, University of Arkansas, Fayetteville, AR 72701, USA} \\ 
\emph{\small $^2$Department of Physics, University of Arkansas, Fayetteville, AR 72701, USA}}

\date{}

\maketitle

\begin{abstract}
This paper studies a particular class of higher order conformally invariant differential operators and related integral operators acting on functions taking values in particular finite dimensional irreducible representations of the Spin group. The differential operators can be seen as a generalization to higher spin spaces of $k$th-powers of the Euclidean Dirac operator. To construct these operators, we use the framework of higher spin theory in Clifford analysis, in which irreducible representations of the Spin group are realized as polynomial spaces satisfying a particular system of differential equations. As a consequence, these operators act on functions taking values in the space of homogeneous harmonic or monogenic polynomials depending on the order. Moreover, we classify these operators in analogy with the quantization of angular momentum in quantum mechanics to unify the terminology used in studying higher order higher spin conformally invariant operators: for integer and half-integer spin, these are respectively bosonic and fermionic operators. Fundamental solutions and their conformal invariance are presented here.
\end{abstract}
{\bf Keywords:}\quad Higher order fermionic and bosonic operators, Conformal invariance, Fundamental solutions, Intertwining operators, Ellipticity.

%%%%%%%%%%%%% Introduction %%%%%%%%%%%%%%
\section{Introduction}\hspace*{\fill} 
\par Classical Clifford analysis started as a generalization of aspects of one variable complex analysis to $m$-dimensional Euclidean spaces. At the heart of this theory is the study of the Dirac operator $D_x$ on $\Rm$, a conformally invariant first order differential operator which generalizes the role of the Cauchy-Riemann operator. Moreover, this operator is related to the Laplace operator with $D_x^2=-\Delta_x$. The classical theory is centered around the study of functions on $\Rm$ and taking values in a spinor space \cite{At, Br}, and abundant results have been found. See for instance \cite{Br, De, P,G,R,R1}.\\
\par
P.A.M. Dirac constructed a first order relativistically covariant equation describing the dynamics of an electron by using Clifford modules; hence differential operators constructed using Clifford modules are called Dirac operators. In the presence of an electromagnetic field, the Dirac Hamiltonian for an electron acquires an additional contribution formally analogous to internal angular momentum called spin, from which the Spin group and related notions take their name; for the electron, spin has the value $\frac{1}{2}$ \cite{Lawson}. Indeed, in dimension four with appropriate signature, null-solutions of the Dirac operator $D_x$ from classical Clifford analysis correspond to solutions for the relativistically covariant dynamical equation of a massless particle of spin $\frac{1}{2}$, also called the Weyl equation. The Dirac equation for the electron, which has mass, may be considered an inhomogeneous equation satisfied by the Dirac operator $D_x$. The Dirac equation is not only relativistically covariant, but also conformally invariant. The construction of conformally invariant massless wave equations, in terms of invariant operators with conformal weights over spin fields, is well described by \cite{E}. The general importance of conformal invariance in physics has long been recognized \cite{RevModPhys}. \\
\par  

Rarita and Schwinger \cite{Ra} introduced a simplified formulation of the theory of particles of arbitrary half-integer spin $k+\frac{1}{2}$ and in particular considered its implications for particles of spin $\frac{3}{2}$. In the context of Clifford analysis, the so-called \emph{higher spin theory} was first introduced through the Rarita-Schwinger operator \cite{B}, which is named analogously to the Dirac operator and reproduces the wave equations for a massless particle of arbitrary half-integer spin in four dimensions with appropriate signature \cite{Ro}. (The solutions to these wave equations may not be physical \cite{VZ1, VZ2}.) The higher spin theory studies generalizations of classical Clifford analysis techniques to higher spin spaces \cite{B1, Br1, B, D, E, Li}. This theory concerns the study of the operators acting on functions on $\Rm$, taking values in arbitrary irreducible representations of $Spin(m)$. These arbitrary representations are defined in terms of polynomial spaces that satisfy certain differential equations, such as $j$-homogeneous monogenic polynomials (half-integer spin) or $j$-homogeneous harmonic polynomials (integer spin). More generally, one can consider the highest weight vector of the spin representation as a parameter \cite{HighestWeightExample}, but this is beyond our present scope. The present paper contributes to the study of conformally invariant operators in the higher spin theory.\\
\par
In principle, all conformally invariant differential operators on locally conformally flat manifolds in higher spin theory are classified by Slov\'{a}k \cite{J}; see also \cite{VS-ref}. This classification is non-constructive, showing only between which vector bundles these operators exist and what is their order; explicit expressions of these operators are still being found. Eelbode and Roels \cite{E} point out that the Laplace operator $\Delta_x$ is not conformally invariant anymore when it acts on $C^{\infty}(\Rm,\Ho)$, where $\Ho$ is the degree one homogeneous harmonic polynomial space (correspondingly $\Mo$ for monogenic polynomials). They construct a second order conformally invariant operator on $C^{\infty}(\Rm,\Ho)$, the (generalized) Maxwell operator. In dimension four with appropriate signature it reproduces the Maxwell equation, or the wave equation for a massless spin-1 particle (the massless Proca equation) \cite{E}. De Bie and his co-authors \cite{B1} generalize this Maxwell operator from $C^{\infty}(\Rm,\Ho)$ to $C^{\infty}(\Rm,\Hj)$ to provide the higher spin Laplace operators, the second order conformally invariant operators generalizing the Laplace operator to arbitrary integer spins. Their arguments also suggest that $D_x^k$ is not conformally invariant in the higher spin theory. This raises the following question: what operators generalize $k$th-powers of the Dirac operator in the higher spin theory? We know these operators exist, with even order operators taking values in homogeneous harmonic polynomial spaces and odd order operators in homogeneous monogenic polynomial spaces \cite{J}. This paper explicitly answers the question with the condition that the target space is a degree one homogeneous polynomial space, encompassing the spin-1 and spin-$\frac{3}{2}$ cases. More generally, one can consider bosonic and fermionic operators corresponding to either integer or half-integer spins, taking values in polynomial spaces of appropriate degree of homogeneity that are either harmonic or monogenic; however, their function theory is not fully examined here.\\
\par
The paper is organized as follows: We briefly introduce Clifford algebras, Clifford analysis, and representation theory of the Spin group in Section 2. In Section 3, we introduce the $k$-order higher spin operators $\DD$ as the generalization of $D_x^k$ when acting on $C^{\infty}(\Rm, U)$, where $U=\Ho$ (spin-1) or $U=\Mo$ (spin-$\frac{3}{2}$) depending on whether $k$ is even or odd. We overview classification, existence, and uniqueness results for higher spin operators. Nomenclature is given for the higher order higher spin operators that we consider: bosonic and fermionic operators. The construction and conformal invariance of the operators $\DD$ are given with the help of the concept of \emph{generalized symmetry}, as in \cite{B1,E}. Then we provide the intertwining operators for $\DD$, which also reveal that $\DD$ is conformally invariant. These intertwining operators are special cases of Knapp-Stein intertwining operators (\cite{CO,KS}) in higher spin theory. Section 4 presents the fundamental solutions  (up to a multiplicative constant) of $\DD$ with the help of Schur's Lemma from representation theory. We also present an argument that the fundamental solution to $\DD$ seen as a type of convolution operator is also conformally invariant. These convolution type operators can also be recovered as Knapp-Stein operators (\cite{CO,KS}) in higher spin theory. The expressions of the fundamental solutions also suggest that $\DD$ is a generalization of $D_x^k$. With the observation that the bases of the target spaces $U$ have simple expressions, we prove that $\DD$ is an elliptic operator in Section 5.

\section*{Acknowledgement}
The authors wish to thank an anonymous referee for helpful suggestions that significantly improved the manuscript. The authors are also grateful to Bent \O rsted for communications pointing out that the intertwining operators of our conformally invariant differential operators and  our convolution type operators can be recovered as Knapp-Stein intertwining operators  and Knapp-Stein operators in higher spin theory.

%%%%%%%%%%%         Preliminaries       %%%%%%%%%%%%%%%%%%%
\section{Preliminaries}
\subsection{Clifford algebra}\hspace*{\fill}
A real Clifford algebra, $\Clm,$ can be generated from $\mathbb{R}^m$ by considering the
relationship $$\underline{x}^{2}=-\|\underline{x}\|^{2}$$ for each
$\underline{x}\in \mathbb{R}^m$.  We have $\mathbb{R}^m\subseteq \Clm$. If $\{e_1,\ldots, e_m\}$ is an orthonormal basis for $\mathbb{R}^m$, then $\underline{x}^{2}=-\|\underline{x}\|^{2}$ tells us that $$e_i e_j + e_j e_i= -2\delta_{ij},$$ where $\delta_{ij}$ is the Kronecker delta function. An arbitrary element of the basis of the Clifford algebra can be written as $e_A=e_{j_1}\cdots e_{j_r},$ where $A=\{j_1, \cdots, j_r\}\subset \{1, 2, \cdots, m\}$ and $1\leq j_1< j_2 < \cdots < j_r \leq m.$
Hence for any element $a\in \Clm$, we have $a=\sum_Aa_Ae_A,$ where $a_A\in \mathbb{R}$. Similarly, the complex Clifford algebra $\Clm (\C)$ is defined as the complexification of the real Clifford algebra
$$\Clm (\C)=\Clm\otimes\C.$$
We consider real Clifford algebra $\Clm$ throughout this subsection, but in the rest of the paper we consider the complex Clifford algebra $\Clm (\C)$ unless otherwise specified. \\
\par
The Pin and Spin groups play an important role in Clifford analysis. The Pin group can be defined as $$Pin(m)=\{a\in \mathcal{C}l_m: a=y_1y_2\dots y_{p},\ y_1,\dots,y_{p}\in\mathbb{S}^{m-1},p\in\mathbb{N}\},$$ 
where $\mathbb{S} ^{m-1}$ is the unit sphere in $\Rm$. $Pin(m)$ is clearly a group under multiplication in $\mathcal{C}l_m$. \\
\par
Now suppose that $a\in \mathbb{S}^{m-1}\subseteq \mathbb{R}^m$, if we consider $axa$, we may decompose
$$x=x_{a\parallel}+x_{a\perp},$$
where $x_{a\parallel}$ is the projection of $x$ onto $a$ and $x_{a\perp}$ is the rest, perpendicular to $a$. Hence $x_{a\parallel}$ is a scalar multiple of $a$ and we have
$$axa=ax_{a\parallel}a+ax_{a\perp}a=-x_{a\parallel}+x_{a\perp}.$$
So the action $axa$ describes a reflection of $x$ in the direction of $a$. By the Cartan-Dieudonn$\acute{e}$ Theorem each $O\in O(m)$ is the composition of a finite number of reflections. If $a=y_1\cdots y_p\in Pin(m),$ we define $\tilde{a}:=y_p\cdots y_1$ and observe that $ax\tilde{a}=O_a(x)$ for some $O_a\in O(m)$. Choosing $y_1,\ \dots,\ y_p$ arbitrarily in $\mathbb{S}^{m-1}$, we see that the group homomorphism
\begin{eqnarray}
\theta:\ Pin(m)\longrightarrow O(m)\ :\ a\mapsto O_a,
\end{eqnarray}
with $a=y_1\cdots y_p$ and $O_ax=ax\tilde{a}$ is surjective. Further $-ax(-\tilde{a})=ax\tilde{a}$, so $1,\ -1\in Ker(\theta)$. In fact $Ker(\theta)=\{1,\ -1\}$. See \cite{P1}. The Spin group is defined as
$$Spin(m)=\{a\in \mathcal{C}l_m: a=y_1y_2\dots y_{2p};\ y_1, \dots,y_{2p}\in\mathbb{S}^{m-1},p\in\mathbb{N}\}$$
 and it is a subgroup of $Pin(m)$. There is a group homomorphism
\begin{eqnarray*}
\theta:\ Spin(m)\longrightarrow SO(m)\ ,
\end{eqnarray*}
which is surjective with kernel $\{1,\ -1\}$. It is defined by $(1)$. Thus $Spin(m)$ is the double cover of $SO(m)$. See \cite{P1} for more details.\\
\par
For a domain $U$ in $\mathbb{R}^m$, a diffeomorphism $\phi: U\longrightarrow \mathbb{R}^m$ is said to be conformal if, for each $x\in U$ and each $\mathbf{v,w}\in TU_x$, the angle between $\mathbf{v}$ and $\mathbf{w}$ is preserved under the corresponding differential at $x$, $d\phi_x$.
For $m\geq 3$, a theorem of Liouville tells us the only conformal transformations are M\"obius transformations. Ahlfors and Vahlen show that given a M\"{o}bius transformation on $\mathbb{R}^m \cup \{\infty\}$ it can be expressed as $y=(ax+b)(cx+d)^{-1}$ where $a,\ b,\ c,\ d\in \Clm$ and satisfy the following conditions \cite{Ah}:
\begin{eqnarray*}
&&1.\ a,\ b,\ c,\ d\ are\ all\ products\ of\ vectors\ in\ \mathbb{R}^m;\\
&&2.\ a\tilde{b},\ c\tilde{d},\ \tilde{b}c,\ \tilde{d}a\in\mathbb{R}^m;\\
&&3.\ a\tilde{d}-b\tilde{c}=\pm 1.
\end{eqnarray*}
Since $y=(ax+b)(cx+d)^{-1}=ac^{-1}+(b-ac^{-1}d)(cx+d)^{-1}$, a conformal transformation can be decomposed as compositions of translation, dilation, reflection and inversion. This gives an \emph{Iwasawa decomposition} for M\"obius transformations. See \cite{Li} for more details.
\par
The Dirac operator in $\mathbb{R}^m$ is defined to be $$D_x:=\sum_{i=1}^{m}e_i\partial_{x_i}.$$  Note $D_x^2=-\Delta_x$, where $\Delta_x$ is the Laplacian in $\mathbb{R}^m$.  A $\Clm$-valued function $f(x)$ defined on a domain $U$ in $\Rm$ is left monogenic if $D_xf(x)=0.$ Since multiplication of Clifford numbers is not commutative in general, there is a similar definition for right monogenic functions. Sometimes we will consider the Dirac operator $D_u$ in vector $u$ rather than $x$.\\
\par
Let $\mathcal{M}_j$ denote the space of $\mathcal{C}l_m$-valued monogenic polynomials, homogeneous of degree $j$. Note that if $h_j(u)\in\Hj$, the space of $\mathcal{C}l_m$-valued harmonic polynomials homogeneous of degree $j$, then $D_u h_j(u)\in\mathcal{M}_{j-1}$, but $D_u up_{j-1}(u)=(-m-2j+2)p_{j-1}(u),$ so
$$\mathcal{H}_j=\mathcal{M}_j\oplus u\mathcal{M}_{j-1},\ h_j=p_j+up_{j-1}.$$
This is an \emph{Almansi-Fischer decomposition} of $\Hj$. See \cite{D} for more details. In this Almansi-Fischer decomposition, we define $P_j$ as the projection map 
\begin{eqnarray*}
P_j: \mathcal{H}_j\longrightarrow \mathcal{M}_j.
\end{eqnarray*}
Suppose again $U$ is a domain in $\mathbb{R}^m$. Consider a differentiable function $f: U\times \mathbb{R}^m\longrightarrow \mathcal{C}l_m,$
such that for each $x\in U$, $f(x,u)$ is a left monogenic polynomial homogeneous of degree $j$ in $u$, then the Rarita-Schwinger operator \cite{B,D} is defined by 
 $$R_jf(x,u):=P_jD_xf(x,u)=(\frac{uD_u}{m+2j-2}+1)D_xf(x,u).$$
 Though we have presented the Almansi-Fischer decomposition, the Dirac operator, and the Rarita-Schwinger operator here in terms of functions taking values in the real Clifford algebra $\mathcal{C}l_m$, they can all be realized in the same way for spinor-valued functions in the complex Clifford algebra $\Clm (\C)$; we discuss spinors in the next section. 

%%%%%%%%%%%%%%%%%%%%        Irreducible representations of Spin group            %%%%%%%%%%%%%%%%%%%%
\subsection{Irreducible representations of the Spin group}
The following three representation spaces of the Spin group are frequently used as the target spaces in Clifford analysis. The spinor representation is the most commonly used spin representation in classical Clifford analysis and the other two polynomial representations are often used in higher spin theory.
%%%%%%%%%%%%%%
\subsubsection{Spinor representation of $Spin(m)$}
Consider the complex Clifford algebra $\mathcal{C}l_m(\mathbb{C})$ with even dimension $m=2n$. Then $\C^m$ or the space of vectors is embedded in $\Clm(\C)$ as
\begin{eqnarray*}
(x_1,x_2,\cdots,x_m)\mapsto \sum^{m}_{j=1}x_je_j:\ \C ^m\hookrightarrow \mathcal{C}l_m(\C).
\end{eqnarray*}
Define the \emph{Witt basis} elements of $\C^{2n}$ as 
$$f_j:=\displaystyle\frac{e_j-ie_{j+n}}{2},\ \ f_j^{\dagger}:=-\displaystyle\frac{e_j+ie_{j+n}}{2}.$$
Let $I:=f_1f_1^{\dagger}\dots f_nf_n^{\dagger}$. The space of \emph{Dirac spinors} is defined as
$$\mathcal{S}:=\Clm(\C)I.$$ This is a representation of $Spin(m)$ under the following action
$$\rho(s)I:=sI,\ for\ s\in Spin(m).$$
Note that $\scs$ is a left ideal of $\Clm (\C)$. For more details, we refer the reader to \cite{De}. An alternative construction of spinor spaces is given in the classical paper of Atiyah, Bott and Shapiro \cite{At}.
%%%%%%%%       Harmonic polynomials      %%%%%%%%%%%%%
\subsubsection{Homogeneous harmonic polynomials on $\mathcal{H}_j(\Rm,\mathbb{C})$}
It is a well-known fact that the space of complex-valued harmonic polynomials defined on several vector variables is invariant under the action of $Spin(m)$, since the Laplacian $\Delta_m$ is an $SO(m)$ invariant operator. But it is not irreducible for $Spin(m)$. It can be decomposed into the infinite sum of $j$-homogeneous harmonic polynomials, $0\leq j<\infty$. Each of these spaces is irreducible for $Spin(m)$. This brings us the most familiar representations of $Spin(m)$: spaces of $j$-homogeneous complex-valued harmonic polynomials defined on $\mathbb{R}^m$, henceforth denoted by $\Hj:=\mathcal{H}_j(\Rm,\mathbb{C})$. The following action has been shown to be an irreducible representation of $Spin(m)$ \cite{L}: 
\begin{eqnarray*}
\rho\ :\ Spin(m)\longrightarrow Aut(\Hj),\ s\longmapsto (f(x)\mapsto f(sx\tilde{s})).
\end{eqnarray*}
This can also be realized as follows
\begin{eqnarray*}
Spin(m)\xlongrightarrow{\theta}SO(m)\xlongrightarrow{\rho} Aut(\Hj);\\
a\longmapsto O_a\longmapsto \big(f(x)\mapsto f(O_ax)\big),
\end{eqnarray*}
where $\theta$ is the double covering map and $\rho$ is the standard action of $SO(m)$ on a function $f(x)\in\Hj$ with $x\in\mathbb{R}^m$. The function $\phi(z)=(z_1+iz_m)^j$ is the highest weight vector for $\Hj (\Rm,\C)$ having highest weight $(j,0,\cdots,0)$ (for more details, see \cite{G}). Accordingly, the spin representations given by $\mathcal{H}_j(\Rm,\mathbb{C})$ are said to have integer spin $j$; we can either specify an integer spin $j$ or the degree of homogeneity $j$ of harmonic polynomials.

%%%%%%%%%%           Monogenic polynomials        %%%%%%%%%%%%%%
\subsubsection{Homogeneous monogenic polynomials on $\mathcal{C}l_m$}\hspace*{\fill}
In $\mathcal{C}l_m$-valued function theory, the previously mentioned Almansi-Fischer decomposition shows that we can also decompose the space of $j$-homogeneous harmonic polynomials as follows
$$\Hj=\Mj\oplus u\Mjj.$$
If we restrict $\Mj$ to the spinor valued subspace, we have another important representation of $Spin(m)$: the space of $j$-homogeneous spinor-valued monogenic polynomials on $\Rm$, henceforth denoted by $\Mj:=\Mj(\Rm,\mathcal{S})$. More specifically, the following action has been shown to be an irreducible representation of $Spin(m)$:
\begin{eqnarray*}
\pi\ :\ Spin(m)\longrightarrow Aut(\Mj),\ s\longmapsto (f(x)\mapsto sf(sx\tilde{s})).
\end{eqnarray*}
When $m$ is odd, in terms of complex variables $z_s=x_{2s-1}+ix_{2s}$ for all $1\leq s\leq \frac{m-1}{2}$, the highest weight vector is
$\omega_j(x)=(\bar{z_1})^jI$ for $\Mj(\Rm,\mathcal{S})$ having highest weight $(j+\frac{1}{2},\frac{1}{2},\cdots,\frac{1}{2})$, where $\bar{z_1}$ is the conjugate of $z_1$, $\mathcal{S}$ is the Dirac spinor space, and $I$ is defined as in Section $2.2.1$; for details, see \cite{L}. Accordingly, the spin representations given by $\mathcal{M}_j(\Rm,\mathcal{S})$ are said to have half-integer spin $j+\frac{1}{2}$; we can either specify a half-integer spin $j+\frac{1}{2}$ or the degree of homogeneity $j$ of monogenic spinor-valued polynomials. 

\par

%%%%%%%%%%%%         Higher order higher spin operator  %%%%

\section{\textbf{The higher order higher spin operator $\mathcal{D}_{1,k}$}}

\subsection{Motivation}
We have mentioned that the Laplace operator (acting on a $\C$-valued field) is related to the Dirac operator (acting on a spinor-valued field) and they are both conformally invariant operators \cite{R1}. Moreover, the $k$th-power of the Dirac operator $D_x^k$ for $k$ a positive integer, is shown also to be conformally invariant in the spinor-valued function theory \cite{R1}. However, the Dirac operator $D_x$ and the Laplace operator are no longer conformally invariant when acting on functions taking values in the higher spin spaces, in the sense explained in the next paragraph; see \cite{B1,E}, and \cite{Ding} for the Dirac operator case. The first generalization of the Dirac operator to higher spin spaces is instead the so-called Rarita-Schwinger operator \cite{B,D}, and the generalization of the Laplace operator to higher spin spaces is the so-called higher spin Laplace or Maxwell operator given in \cite{B1,E}.  

Let us look deeper into this lack of conformal invariance of the Dirac operator $D_x$ when acting on functions taking values in the higher spin spaces. Given a function $f(x,u)\in C^{\infty}(\Rm,\Mj)$ such that $D_x f(x,u) = 0$, we apply inversion $x\longmapsto \displaystyle\frac{x}{||x||^2}$ to it. There is also a reflection of $u$ in the direction $x$ given by $\displaystyle\frac{xux}{||x||^2}$; this reflection involves $x$, which changes the conformal invariance of $D_x$ such that $D_x f(x,u) = 0$ does not hold in general. This explanation also applies for the Laplace operator $\Delta_x$ in the higher spin theory. The explanation we just mentioned further implies that the $k$th-power of the Dirac operator $D_x^k$ is not conformally invariant in the higher spin theory. In this section, we will provide the generalization of $D_x^k$ when it acts on $C^{\infty}(\Rm, U)$, where $U=\Ho$ or $U=\Mo$ depending on the order. We provide nomenclature for these higher order operators in higher spin theory. We begin by examining existence and uniqueness of conformally invariant differential operators in higher spin spaces.
%%%%%%%%%  Existence   %%%%%%%%%%%%%%%
\subsection{Existence of conformally invariant operators}
There is a well developed literature on the existence of conformally invariant operators \cite{Fegan, Branson, J, Slovak-Soucek, VS-ref}. In \cite{J}, Slov\'{a}k demonstrated the existence of conformally invariant differential operators in higher spin spaces. Then Sou\v{c}ek considered Slov\'{a}k's results in a form more suitable for Clifford analysis. In this section, we review Sou\v{c}ek's results. For more details, we refer the reader to \cite{VS-ref}.\\
\par
Let $M=\Rm\cup\{\infty\}$ be the conformal compactification of $\Rm$, $\Gamma_m$ be the Clifford group, and $V(m)$ be the group of Ahlfors-Vahlen matrices. We know that all conformal transformations in $\Rm,\ m>2$ can be expressed in the form $\varphi (x)=(ax+b)(cx+d)^{-1}$ with $\begin{pmatrix} a & b \\ c & d
\end{pmatrix}\in V(m)$. Let $G$ denote the identity component of the group $V(m)$. The group $G$ acts transitively on $M$ and the isotropic group of the point $0\in\Rm$ is clearly the subgroup $H$ of all matrices in G with the form
$\begin{pmatrix} a & 0 \\ c & d
\end{pmatrix}$. Hence $M\cong G/H$.
\par
For a matrix $A\in H$, the element $a\in\Gamma_m$ has a nonzero norm and can be written as the product of $\displaystyle\frac{a}{||a||}\in Spin(m)$ and $||a||\in\mathbb{R}^+$. If $\lambda$ is a dominant integral weight for $Spin(m)$ with the corresponding irreducible representation $V_{\lambda}$ and $\omega\in\mathbb{C}$ is a conformal weight, we denote $\rho_{\lambda}(\omega)$  the irreducible representation of $H$ on $V_{\lambda}$ given by
\begin{eqnarray*}
\rho_{\lambda}(\omega)(h)[v]=||a||^{-2\omega}\rho_{\lambda}(\frac{a}{||a||})[v];\ v\in V_{\lambda},\ h\in H;\ h=\begin{pmatrix} a & 0 \\ c & d
\end{pmatrix}.
\end{eqnarray*}
Below we discuss differential operators acting on sections of homogeneous vector bundles over $M=G/H$. We shall consider only bundles associated to irreducible representations of the isotropic group $H$. Hence they are specified by a highest weight $\lambda$ giving an irreducible representation of $Spin(m)$ and by a conformal weight $\omega\in\mathbb{C}$. Such a bundle will be denoted by $V_{\lambda}(\omega)$. The following lemma gives the action of $G$ on $C^{\infty}(\Rm,V_{\lambda}(\omega))$.
\begin{lemma}\cite{VS-ref}\label{action}
The action of $G$ on $C^{\infty}(\Rm,V_{\lambda}(\omega))$ is given by
\begin{eqnarray*}
[g\cdot f](x)=||cx+d||^{-2\omega}\rho_{\lambda}(\frac{\widetilde{cx+d}}{||cx+d||})f((ax+b)(cx+d)^{-1}),
\end{eqnarray*}
where $g^{-1}=\begin{pmatrix} a & b \\ c & d
\end{pmatrix}\in G$. $``\cdot"$ stands for the action of $g$ on function $f$.
\end{lemma}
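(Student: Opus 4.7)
The plan is to recognize the stated formula as the concrete realization, after a choice of local section, of the canonical left $G$-action on smooth sections of the homogeneous vector bundle $V_\lambda(\omega) = G\times_H V_\lambda$ over $M\cong G/H$, trivialized via the translation section on $\Rm$. The whole computation then reduces to one explicit matrix decomposition in the Ahlfors-Vahlen group, followed by an inversion to read off the cocycle.

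First I identify $C^\infty(\Rm,V_\lambda(\omega))$ with smooth $H$-equivariant maps $F:G\to V_\lambda$ satisfying $F(gh)=\rho_\lambda(\omega)(h^{-1})F(g)$; the canonical action is left translation $(g\cdot F)(g_0)=F(g^{-1}g_0)$. I trivialize with the natural section $\sigma:\Rm\to G$ sending $x$ to the translation matrix $\begin{pmatrix}1&x\\0&1\end{pmatrix}$, so that $f(x):=F(\sigma(x))$ establishes the bijection with $C^\infty(\Rm,V_\lambda)$ and $(g\cdot f)(x)=F(g^{-1}\sigma(x))$. Writing $g^{-1}=\begin{pmatrix}a&b\\c&d\end{pmatrix}$, direct multiplication gives
$$g^{-1}\sigma(x)=\begin{pmatrix}a&ax+b\\c&cx+d\end{pmatrix}.$$
I factor this as $\sigma(y)\,h$ with $y\in\Rm$ and $h\in H$; comparing with $\sigma(y)\begin{pmatrix}a'&0\\c'&d'\end{pmatrix}=\begin{pmatrix}a'+yc'&yd'\\c'&d'\end{pmatrix}$ forces $y=(ax+b)(cx+d)^{-1}$, $c'=c$, $d'=cx+d$, which exhibits $y$ as the expected M\"obius image and pins down the $(c',d')$-block of the cocycle factor $h=h(g,x)$.

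Next, since $\rho_\lambda(\omega)$ on $H$ only sees the $a$-component of its argument, I just need the $a$-component of $h^{-1}$. The Ahlfors-Vahlen relations recalled in the excerpt ($a\tilde d - b\tilde c = \pm 1$ etc.) yield the inversion rule $\begin{pmatrix}a'&0\\c'&d'\end{pmatrix}^{-1}=\pm\begin{pmatrix}\widetilde{d'}&0\\-\widetilde{c'}&\widetilde{a'}\end{pmatrix}$, so $a_{h^{-1}}=\widetilde{cx+d}$ (the sign is harmless since $\rho_\lambda$ factors through $Spin(m)/\{\pm 1\}$). Because Clifford conjugation reverses products of unit vectors, it preserves norms, so $\|a_{h^{-1}}\|=\|cx+d\|$. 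Substituting into the definition
$$\rho_\lambda(\omega)(h^{-1})=\|a_{h^{-1}}\|^{-2\omega}\rho_\lambda\!\left(\frac{a_{h^{-1}}}{\|a_{h^{-1}}\|}\right)=\|cx+d\|^{-2\omega}\rho_\lambda\!\left(\frac{\widetilde{cx+d}}{\|cx+d\|}\right)$$
and combining with $(g\cdot f)(x)=\rho_\lambda(\omega)(h^{-1})f(y)$ gives exactly the stated formula.

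\textbf{Main obstacle.} The delicate step is the Vahlen-matrix inversion: one has to use the noncommutative Ahlfors-Vahlen identities correctly to isolate the $a$-component of $h^{-1}$ without losing track of reversions and signs. Once that is done, the fact that $g\mapsto[f\mapsto g\cdot f]$ is a genuine representation of $G$ is automatic from its origin as left translation on equivariant functions on $G$, so no separate cocycle identity needs to be verified.
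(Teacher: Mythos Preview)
The paper states this lemma with a citation to Sou\v{c}ek and gives no proof of its own, so there is nothing in the paper to compare your argument against; your approach via the associated-bundle description of the induced representation and the translation section $\sigma(x)=\begin{pmatrix}1&x\\0&1\end{pmatrix}$ is exactly the standard derivation one would expect, and the matrix factorization $g^{-1}\sigma(x)=\sigma(y)h$ is carried out correctly.

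One small caveat: your justification for discarding the sign in the Vahlen inversion, namely that ``$\rho_\lambda$ factors through $Spin(m)/\{\pm 1\}$'', is not valid in general---for half-integer highest weights (the $\mathcal{M}_j$ case used later in the paper) $\rho_\lambda$ does \emph{not} descend to $SO(m)$. The correct reason the sign causes no trouble is that $G$ is by definition the \emph{identity component} of $V(m)$, so the Ahlfors--Vahlen pseudo-determinant $a\tilde d - b\tilde c$ is identically $+1$ on $G$ (and hence on $H\subset G$), and no sign ambiguity arises in the inversion $h\mapsto h^{-1}$. With that adjustment your argument is complete.
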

We now consider conformally invariant differential operators between sections of $\Gamma(M,V_{\lambda}(\omega))$ and $\Gamma(M,V_{\lambda'}(\omega'))$ of order $\omega'-\omega$, separately for the even and odd dimension cases.
%%%%%%%%%%%%%%
\subsubsection{Even dimension $m=2n$} 
The highest weights of fundamental representations $\Lambda^i(\C^m)$ are
$$\{\lambda_i=(1,1,\cdots,1,0,\cdots,0):\ i=1,2,\cdots,n-2, \text{first}\ i\  \text{entries\ of\ $n$-tuple\ are\ $1$}\}$$ 
   and highest weights of the basic spinor representations $\mathcal{S}^{\pm}$ of $Spin(m)$ \cite{G} are $n-$tuples
$$ \sigma^{\pm}=(\displaystyle\frac{1}{2},\displaystyle\frac{1}{2},\cdots,\pm\displaystyle\frac{1}{2}).$$
Then the $(n+1)$-tuple $(B,D_i,A,C)$ specifies the irreducible representation $\rho_{\lambda}(\omega)$ for $Spin(m)$, where
 \begin{eqnarray}\label{evenlambda}
 \lambda=\displaystyle\sum_{i=1}^{n-2}(D_i-1)\lambda_i+(A-1)\sigma^{+}+(C-1)\sigma^-
\end{eqnarray}
and the conformal weight is given by
\begin{eqnarray}\label{evenomega}
\omega=n-\big[B+\sum_{i=1}^{n-2}D_i+\frac{A+C}{2}\big].
\end{eqnarray}
Let us now state Sou\v{c}ek's theorem on classification of nonstandard operators, in the even dimension case.
\begin{theorem}\cite{VS-ref}\label{VS1} 
Let $(\lambda,\omega)$ and $(\lambda',\omega')$  be computed using Equations (\ref{evenlambda}) and (\ref{evenomega}), where the positive integers $D_i, A, C, D_i',A',C', i=1,2,\cdots,n-2,$ may adopt any values in the columns to their right in the following table, and where $(\lambda',\omega')$ are determined by primed coefficients. In the table, $a,b,c,d_i,i=1,2,\cdots,n-2$ are nonnegative integers, $d=\sum_id_i$,  and the integer $e$ is defined by $e=a+b+c+d$. 

\begin{center}
\begin{tabular}{ c c c c c c c } 
 B & $-b-d$ & $-b-d+d_{n-2}$ & $\cdots$ & $-b-d_1$ & $-b$ & b \\ 
 $B'$ & $-e$ & $-e-d_{n-2}$ &$\cdots$ & $-e-d+d_1$ &$-e-d$ &$-e-d$ \\ 
 $D_1=D_1' $& b & b & $\cdots$& b & $b+d_1$ & $d_1$\\ 
 $D_2=D_2'$ & $d_1$ & $d_1$ & $\cdots$ & $d_1+d_2$ & $d_2$ & $d_2$\\
 $\vdots$ & $\vdots$ & $\vdots$ &$\vdots$ & $\vdots$ & $\vdots$ & $\vdots$ \\
 $D_{n-3}=D_{n-3}'$ & $d_{n-4}$ & $d_{n-4}$ &$\cdots$ & $d_{n-3}$ & $d_{n-3}$ & $d_{n-3}$\\
 $D_{n-2}=D_{n-2}'$ & $d_{n-3}$ & $d_{n-3}+d_{n-2}$ &$\cdots$ & $d_{n-2}$ & $d_{n-2}$ & $d_{n-2}$\\
 $A=C'$ & $a+d_{n-2}$ & a &$\cdots$ & a & a & a \\
 $C=A'$ & $c+d_{n-2}$ & c &$\cdots$ &c & c & c 
\end{tabular}
\end{center}
Then there exists (up to a multiple) unique nontrivial conformally invariant differential operators between sections of $\Gamma(M,V_{\lambda}(\omega))$ and $\Gamma(M,V_{\lambda'}(\omega'))$; its order is equal to $\omega'-\omega$. This is a complete list of the so-called nonstandard conformally invariant differential operators on spaces of even dimension.
\end{theorem}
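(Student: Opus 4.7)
The plan is to reduce this classification statement to a problem in the representation theory of generalized Verma modules and to read the table off from the combinatorics of the affine Weyl group action. By Frobenius reciprocity for the parabolic homogeneous space $M = G/H$, a $G$-invariant differential operator from $\Gamma(M, V_{\lambda}(\omega))$ to $\Gamma(M, V_{\lambda'}(\omega'))$ corresponds, with arrows reversed on the algebraic side, to a $\mathfrak{g}$-module homomorphism between the generalized Verma modules induced from the dual $H$-representations $V_{\lambda}^{*}(-\omega)$ and $V_{\lambda'}^{*}(-\omega')$ for the complexified conformal algebra $\mathfrak{g} = \mathfrak{so}(m+2,\mathbb{C})$, where the parabolic subalgebra $\mathfrak{p}$ has Levi factor with semisimple piece $\mathfrak{so}(m,\mathbb{C})$. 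The explicit cocycle in Lemma~\ref{action} supplies the bridge in one direction: translation invariance forces constant coefficients, dilation and rotation invariance fix the principal symbol up to a scalar, and invariance under inversion $x \mapsto x/\|x\|^{2}$ produces the algebraic relation equivalent to the existence of such a homomorphism.

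Next I would invoke the classification of homomorphisms between generalized Verma modules for the orthogonal series due to Jantzen and Boe--Collingwood. The possible arrows are parametrized by Kostant representatives $w \in W^{\mathfrak{p}} \subset W$, with the order of the resulting differential operator equal to the length of $w$; equivalently, this order is $\omega' - \omega$, consistent with the grading by conformal weights. Among these arrows one separates the standard ones, namely those which lift to curved conformally invariant operators via the generalized BGG machinery of \v{C}ap--Slov\'{a}k--Sou\v{c}ek, from the nonstandard ones, which exist only in the conformally flat setting (our $M = S^{m}$) and correspond to the longer arrows skipping several nodes of the Hasse diagram.

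For even dimension $m = 2n$, the ambient algebra is $\mathfrak{so}(2n+2,\mathbb{C})$ of type $D_{n+1}$, whose Hasse diagram has a fork at the two spinor nodes. Walking along the long arrows and tracking how each simple reflection modifies the tuple $(B, D_{i}, A, C)$ via Equations~(\ref{evenlambda}) and~(\ref{evenomega}) should reproduce the eight columns of the table. The transposition $A \leftrightarrow C'$ appearing in the extreme columns comes from the spinor-exchange reflection at the fork, while the interior columns correspond to reflections through vector-type simple roots, shifting the $D_{i}$'s and $B$ as displayed. Uniqueness up to a scalar follows from Schur's lemma once one knows the relevant Verma quotients are generically irreducible.

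The main obstacle, and the place where most of the real work lies, is the explicit bookkeeping of the third step: translating the abstract root-theoretic action on highest weights into the precise integer shifts recorded in the eight columns, and verifying that these arrows genuinely exhaust the nonstandard possibilities. Here Jantzen's parabolic sum formula supplies the non-degeneracy criterion needed to discard spurious candidates, and comparing the length of each Weyl element against $\omega' - \omega$ computed from Equation~(\ref{evenomega}) provides a consistency check on the coefficient formulas in the table.
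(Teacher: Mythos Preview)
The paper does not prove this theorem; it is quoted verbatim from Sou\v{c}ek's lecture notes \cite{VS-ref}, which in turn rest on Slov\'{a}k's habilitation thesis \cite{J}. The authors use it as a black box to locate the operators $\mathcal{D}_{1,k}$ among the classified nonstandard operators, so there is no proof in the paper to compare your proposal against.

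That said, the strategy you outline is essentially the one underlying the cited works: passage to generalized Verma module homomorphisms via Frobenius reciprocity, then classification of those homomorphisms by the parabolic Hasse diagram and the affine action of the Weyl group. Your identification of the ambient algebra as type $D_{n+1}$ and of the $A\leftrightarrow C'$ swap with the spinor-node symmetry is correct in spirit. Where your sketch remains genuinely incomplete is exactly where you flag it: the translation from Weyl group elements to the specific integer shifts in the table is not a routine calculation, and the distinction between standard and nonstandard arrows in type $D$ (where the BGG resolution is not a simple chain) requires more than a length comparison. The Jantzen sum formula you mention is relevant, but the actual argument in \cite{J} goes through a careful case analysis of singular infinitesimal characters and the structure of the translation functors, which your outline does not supply. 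If you intend to reconstruct the proof rather than cite it, you would need to fill in precisely that combinatorial bookkeeping, and it is substantial.
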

%%%%%%%%%%%%
\subsubsection{Odd dimension $m=2n+1$}
The highest weights of fundamental representations $\Lambda^i(\C^m)$ are
$$\{\lambda_i=(1,1,\cdots,1,0,\cdots,0):\ i=1,2,\cdots,n-1, \text{where\ the\ first}\ i\  \text{entries\ of\ $n$-tuple\ are\ $1$}\}$$ 
   and highest weights of the basic spinor representation $\mathcal{S}$ of $Spin(m)$ \cite{G} are $n-$tuples
$$ \sigma=(\displaystyle\frac{1}{2},\displaystyle\frac{1}{2},\cdots,\displaystyle\frac{1}{2}).$$
Then the $(n+1)$-tuple $(B,D_i,A,C)$ specifies the irreducible representation $\rho_{\lambda}(\omega)$ for $Spin(m)$, where
\begin{eqnarray}\label{oddlambda}
 \lambda=\displaystyle\sum_{i=1}^{n-1}(D_i-1)\lambda_i+(A-1)\sigma
\end{eqnarray}
and the conformal weight is given by
\begin{eqnarray}\label{oddomega}
\omega=\frac{2n+1}{2}-\big[B+\sum_{i=1}^{n-1}D_i+\frac{A}{2}\big].
\end{eqnarray}
Let us now state Sou\v{c}ek's theorem on classification of nonstandard operators, now in the odd dimension case.

%We use the following labels for irreducible representations of $Spin(m)$: 
%$$\{\lambda_i=(1,1,\cdots,1,0,\cdots,0):\ i=1,2,\cdots,n-1, \text{where\ the\ first}\ i\  \text{entries\ of\ $n$-tuple\ are\ $1$}\}$$ 
%are highest weights of fundamental representations $\Lambda^i(\C^m)$  and 
%$$\text{n-tuple}\ \sigma=(\displaystyle\frac{1}{2},\displaystyle\frac{1}{2},\cdots,\displaystyle\frac{1}{2})$$
% is highest weight of the basic spinor representation $\mathcal{S}$ of $Spin(m)$ \cite{G}. Then the $(n+1)$-tuple $(B,D_i,A,C)$ denotes the irreducible representation $\rho_{\lambda}(\omega)$, where
% \begin{eqnarray}\label{oddlambda}
% \lambda=\displaystyle\sum_{i=1}^{n-1}(D_i-1)\lambda_i+(A-1)\sigma
%\end{eqnarray}
%and the conformal weight is given by
%\begin{eqnarray}\label{oddomega}
%\omega=\frac{2n+1}{2}-\big[B+\sum_{i=1}^{n-1}D_i+\frac{A}{2}\big].
%\end{eqnarray}
%We have then the following list of nonstandard invariant differential operators.

\begin{theorem}\cite{VS-ref}\label{VS2} 
Let $(\lambda,\omega)$ and $(\lambda',\omega')$  be computed using Equations (\ref{oddlambda}) and (\ref{oddomega}), where the positive integers $D_i, A, C, D_i',A',C', i=1,2,\cdots,n-1,$ may adopt any values in the columns to their right in the following table, and where $(\lambda',\omega')$ are determined by primed coefficients. In the table, $a,b,c,d_i,i=1,2,\cdots,n-1$ are nonnegative half-integers or integers (at least one of them being half integral), $d=\sum_id_i$, and the integer $e$ is defined by $e=a+b+d$. 
%
%Let $(\lambda,\omega)$, resp. $(\lambda',\omega')$ be cpmputed using (\ref{oddlambda}), (\ref{oddomega}) from values in any column in the following table, satisfying conditions that $a,d_i,i=1,2,\cdots,n-1$ are nonnegative half-integers or integers (at least one of them being half integral), $d=\sum_id_i$; $e=a+b+d$ and that all numbers $D_i, A, C, D_i',A',C'$ are positive integers.

\begin{center}
\begin{tabular}{ c c c c c c c } 
 B & $-b-d$ & $-b-d+d_{n-1}$ & $\cdots$ & $-b-d_1$ & $-b$ & b \\ 
 $B'$ & $-e$ & $-e-d_{n-1}$ &$\cdots$ & $-e-d+d_1$ &$-e-d$ &$-e-d$ \\ 
 $D_1=D_1' $& b & b & $\cdots$& b & $b+d_1$ & $d_1$\\ 
 $D_2=D_2'$ & $d_1$ & $d_1$ & $\cdots$ & $d_1+d_2$ & $d_2$ & $d_2$\\
 $\vdots$ & $\vdots$ & $\vdots$ &$\vdots$ & $\vdots$ & $\vdots$ & $\vdots$ \\
 $D_{n-2}=D_{n-2}'$ & $d_{n-3}$ & $d_{n-3}$ &$\cdots$ & $d_{n-2}$ & $d_{n-2}$ & $d_{n-2}$\\
 $D_{n-1}=D_{n-1}'$ & $d_{n-2}$ & $d_{n-2}+d_{n-1}$ &$\cdots$ & $d_{n-1}$ & $d_{n-1}$ & $d_{n-1}$\\
 $A=A'$ & $a+d_{n-2}$ & a &$\cdots$ & a & a & a 
 \end{tabular}
\end{center}
Then there exists (up to a multiple) unique nontrivial conformally invariant differential operators between sections of $\Gamma(M,V_{\lambda}(\omega))$ and $\Gamma(M,V_{\lambda'}(\omega'))$; its order is equal to $\omega'-\omega$. This is a complete list of the so-called nonstandard conformally invariant differential operators on spaces of odd dimension.
\end{theorem}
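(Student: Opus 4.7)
The plan is to translate the classification of conformally invariant operators between sections of $V_\lambda(\omega)$ and $V_{\lambda'}(\omega')$ into an algebraic classification of homomorphisms between generalized Verma modules for the conformal Lie algebra $\mathfrak{g}=\mathfrak{so}(m+2,\mathbb{C})$. For $m=2n+1$ this is a complex simple Lie algebra of type $B_{n+1}$, and the isotropic subgroup $H$ from Lemma \ref{action} corresponds to a maximal parabolic subalgebra $\mathfrak{p}$ (the one that singles out the conformal weight $\omega$). By Kostant--Harish-Chandra duality, $G$-equivariant differential operators from $\Gamma(M,V_\lambda(\omega))$ to $\Gamma(M,V_{\lambda'}(\omega'))$ are in bijection with $(\mathfrak{g},H)$-module maps
\[
M_{\mathfrak{p}}\bigl(V_{\lambda'}(\omega')^{*}\bigr) \longrightarrow M_{\mathfrak{p}}\bigl(V_{\lambda}(\omega)^{*}\bigr),
\]
and the order of the differential operator equals $\omega'-\omega$, since the grading element of $\mathfrak{p}$ acts by the conformal weight.

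First, I would compute infinitesimal characters: after the $\rho$-shift by the half-sum of positive roots, each triple $(B,D_i,A)$ gives a weight in the dual Cartan, and by Harish-Chandra a nonzero homomorphism between the two Verma modules requires both weights to lie in a common Weyl group orbit $W(B_{n+1})\cdot \nu$. Second, within each such orbit I would invoke the parabolic BGG theorem to enumerate actual arrows: at \emph{regular} infinitesimal character one obtains the \emph{standard} operators, organized into a BGG-style resolution, while the \emph{nonstandard} arrows counted by the theorem appear precisely at \emph{singular} infinitesimal characters, where a generalized Verma module acquires additional composition factors beyond those in the regular pattern. Third, for each singular configuration I would construct the corresponding singular vector and read off the source and target highest weights through the formulas (\ref{oddlambda}) and (\ref{oddomega}); reorganizing the output by the auxiliary nonnegative parameters $a,b,c,d_i$ would recover the columns of the table, with each column corresponding to one choice of reflection pattern in the singular Weyl chamber.

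The main obstacle is the explicit identification of the nonstandard arrows in type $B$, because the presence of the short simple root changes the Weyl group orbit structure and therefore the singular locus compared to the type $D$ situation underlying Theorem \ref{VS1}. In particular, the requirement that at least one of $a,b,c,d_i$ be half-integral reflects the need to land on a singular weight that is not accessible in the even-dimensional case; this is exactly where the short root of $B_{n+1}$ contributes. Confirming that each such configuration yields a one-dimensional space of homomorphisms, so that the invariant operator is unique up to scalar, reduces to a multiplicity-one computation in the singular block of the parabolic category $\mathcal{O}^{\mathfrak{p}}$, and that bookkeeping, together with matching the reflection pattern in each column to the right shift $\omega'-\omega$, is the most delicate step of the proof.
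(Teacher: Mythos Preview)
The paper does not prove this theorem. Theorem~\ref{VS2} is stated with the citation \cite{VS-ref} attached and is invoked as a known classification result of Sou\v{c}ek (building on Slov\'{a}k \cite{J}); no argument for it appears anywhere in the text. The paper's own contribution begins only in Section~3.2.3, where the authors \emph{apply} Theorems~\ref{VS1} and~\ref{VS2} to the particular highest weights $\lambda=(j,0,\dots,0)$ and $\lambda=(j+\tfrac12,\tfrac12,\dots,\pm\tfrac12)$. So there is nothing to compare your proposal against in this paper.

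That said, your sketch is the correct architecture for how the cited result is actually established. The conformal sphere $M=G/H$ in odd dimension $m=2n+1$ has $\mathfrak{g}_{\mathbb{C}}\cong\mathfrak{so}(m+2,\mathbb{C})$ of type $B_{n+1}$ with $H$ corresponding to the maximal parabolic for the first node; the duality between $G$-invariant differential operators and homomorphisms of generalized Verma modules is exactly the mechanism used by Slov\'{a}k and Sou\v{c}ek; and the standard/nonstandard dichotomy is indeed the regular/singular infinitesimal character dichotomy in parabolic category $\mathcal{O}^{\mathfrak{p}}$. Your identification of the half-integrality condition with hitting a singular wall via the short root is the right intuition. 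Where your outline remains genuinely incomplete is the step you yourself flag: verifying that each singular block contributes exactly a one-dimensional $\operatorname{Hom}$ space and that the list of columns is exhaustive. In the original references this is done by a careful case analysis of the affine Weyl orbit and Jantzen-type arguments, not by a soft appeal to parabolic BGG, and that bookkeeping is the entire content of the theorem. If you want to supply a proof here, you would need to carry out that enumeration explicitly or cite the precise lemmas in \cite{J,VS-ref} that do so.
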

%%%%%%%%%%%
\subsubsection{Applications to our cases}
Theorem \ref{VS1} and \ref{VS2} show existence of conformally invariant differential operators as follows.
\begin{theorem} \cite{VS-ref}\label{VS}
Let $(\lambda,\omega)$ and $(\lambda',\omega')$ be one of couples for which there is a (nontrivial) invariant differential operator 
$$D:\ \Gamma(M,V_{\lambda}(\omega))\longrightarrow \Gamma(M,V_{\lambda'}(\omega'))$$
(the nonstandard operators are listed in Theorem \ref{VS1} and \ref{VS2}; the complete list is in \cite{J}).
\par
Let $T_{\lambda,\omega}(g),\ g^{-1}=\begin{pmatrix} a & b \\ c & d
\end{pmatrix}\in G$ (similarly for $T_{\lambda',\omega'}(g)$) be the operator acting on smooth maps from $\Rm$ to $V_{\lambda}$ by
\begin{eqnarray*}
\big[T_{\lambda,\omega}(g)f\big](x)=||cx+d||^{-2\omega}\rho_{\lambda}(\frac{\widetilde{cx+d}}{||cx+d||})\big[f((ax+b)(cx+d)^{-1})\big].
\end{eqnarray*}
Then
\begin{eqnarray*}
D(T_{\lambda,\omega}(g)f)=T_{\lambda',\omega'}(g)(Df),\ g\in G,\ f\in C^{\infty}(\Rm,V_{\lambda}).
\end{eqnarray*}
\end{theorem}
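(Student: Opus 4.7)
The strategy is to recognize that the intertwining formula is precisely the coordinate-level reformulation of $D$ being $G$-invariant as a differential operator between the homogeneous bundles $V_{\lambda}(\omega)$ and $V_{\lambda'}(\omega')$ over $M \cong G/H$. Since Theorems \ref{VS1} and \ref{VS2} already guarantee the existence of such a $D$, the work lies in aligning the two pictures.

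First I would trivialize both bundles over the open cell $\Rm \hookrightarrow M$ via the standard section $x \mapsto \begin{pmatrix} 1 & x \\ 0 & 1 \end{pmatrix} H$, which identifies smooth sections of $V_{\lambda}(\omega)$ with elements of $C^{\infty}(\Rm, V_{\lambda})$. Under this identification, the canonical action of $G$ on $\Gamma(M, V_{\lambda}(\omega))$ pulls back to the formula given in Lemma \ref{action}, which is by definition $T_{\lambda,\omega}(g)$. Hence $T_{\lambda,\omega}$ is simply the $G$-action on $C^{\infty}(\Rm, V_{\lambda})$ transferred from the section picture, and likewise for $T_{\lambda',\omega'}$. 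The operator $D$ produced by Theorems \ref{VS1} and \ref{VS2} is, by the meaning of conformal invariance used there, a $G$-equivariant map $\Gamma(M, V_{\lambda}(\omega)) \to \Gamma(M, V_{\lambda'}(\omega'))$; translating this equivariance through the trivialization immediately yields $D \circ T_{\lambda,\omega}(g) = T_{\lambda',\omega'}(g) \circ D$, which is the claimed identity.

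For a verification from first principles, rather than importing the classification result, I would use the Iwasawa-type decomposition recalled in Section 2.1 to write every $g \in G$ as a product of translations, dilations, elements of $Spin(m)$, and the inversion $x \mapsto x/\|x\|^2$, and then check the identity on each generator. Translation invariance is immediate because $D$ has coefficients independent of $x$; $Spin(m)$-equivariance is built into the construction of $D$ from $Spin(m)$-irreducible data; dilation invariance reduces to a degree count, since the order of $D$ equals $\omega' - \omega$, so the factors $\|cx+d\|^{-2\omega}$ and $\|cx+d\|^{-2\omega'}$ balance after differentiation of that order.

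The main obstacle is the inversion step. There the rotational factor $\rho_{\lambda}(\widetilde{cx+d}/\|cx+d\|)$ is nontrivial and applying $D$ through the chain rule generates cross terms coupling derivatives of the weight factor, derivatives of the spinorial factor, and derivatives of $f((ax+b)(cx+d)^{-1})$ itself. The cancellation of these cross terms is precisely the constraint that pins down the admissible pairs $(\lambda,\omega)$ and $(\lambda',\omega')$ appearing in Theorems \ref{VS1} and \ref{VS2}; in effect, the explicit verification of intertwining under inversion recovers the classification conditions. This is why, in practice, one invokes Slov\'{a}k's theorem rather than grinding through the inversion computation for each operator, and why conformal invariance of our specific $\DD$ in the sequel will be handled by exhibiting a \emph{generalized symmetry} rather than by direct substitution.
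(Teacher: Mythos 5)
The paper does not supply its own proof of Theorem~\ref{VS}: like Theorems~\ref{VS1} and~\ref{VS2}, it is imported verbatim from Sou\v{c}ek \cite{VS-ref} (and ultimately Slov\'ak \cite{J}), and the authors use it as a black box to establish existence and to read off the intertwining operators for $\DD$. There is therefore no in-paper argument to compare against line by line; what can be assessed is whether your explanation is a sound account of why the cited statement holds, and it is.

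Your central observation is correct and is exactly the standard way to read Theorem~\ref{VS}: once the homogeneous bundles $V_\lambda(\omega)$ and $V_{\lambda'}(\omega')$ over $M\cong G/H$ are trivialized along the open big cell by the section $x\mapsto\begin{pmatrix}1&x\\0&1\end{pmatrix}H$, the induced $G$-action on sections becomes precisely the operator in Lemma~\ref{action}, i.e.\ $T_{\lambda,\omega}(g)$; and ``$D$ is a conformally invariant (i.e.\ $G$-equivariant) differential operator between these bundles'' translates under that trivialization into the displayed identity $D\circ T_{\lambda,\omega}(g)=T_{\lambda',\omega'}(g)\circ D$. In other words, Theorem~\ref{VS} is not an additional analytic fact about the operators classified in Theorems~\ref{VS1}--\ref{VS2}; it is the coordinate form of the very equivariance that defines them. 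Your proposal states this cleanly.

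Two cautions on the ``first-principles'' supplement. First, the dilation step as you phrase it assumes that $D$ has order exactly $\omega'-\omega$ and is homogeneous of that degree; this is a consequence of the classification, not something you can invoke independently, so that part of the sketch is not really an alternative to citing Sou\v{c}ek -- it quietly reuses him. Second, the inversion step is, as you say, where all the content lives, and the honest conclusion is the one you draw: one does not prove Theorem~\ref{VS} by chasing the chain rule through the Kelvin-type inversion for each $D$; one recognizes it as definitional given the bundle formalism, and verifies invariance of a \emph{specific} $\DD$ by the generalized-symmetry computations that the paper carries out in Section~3.3. That matches the role the paper actually assigns to this theorem.
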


This paper considers differential operators acting on functions $f(x,u)\in C^{\infty}(\Rm,\Hj)$ or $f(x,u)\in C^{\infty}(\Rm,\Mj)$. Here we only show existence of conformally invariant differential operators on spaces of \emph{even dimension $m$}; the odd dimensional case is similar. We work out the allowable highest weights, conformal weights, and orders on operators acting on these function spaces.
\par
From Theorem \ref{VS1}, we notice that highest weight $\lambda$ is determined by $D_i,\ A$ and $C$. From the table, we also have $\lambda=\lambda'$. In other words, conformally invariant operators only exist between $C^{\infty}(\Rm,\Hj)$ and itself or $C^{\infty}(\Rm,\Mj)$ and itself. We consider each in turn.\\
\par
%%%%%%%%%% Hj %%%%%%%
\textbf{Integer Spin Case: $C^{\infty}(\Rm,\Hj)\longrightarrow C^{\infty}(\Rm,\Hj)$}\\
\par
As an irreducible representation of $Spin(m)$, $\Hj$ has highest weight of $n$-tuple $\lambda=\lambda'=(j,0,\cdots,0).$ The group action $\rho_{\lambda}$ is defined in Section $2.2.2$. From the table in Theorem \ref{VS1} and
\begin{eqnarray*}
&&\lambda=\displaystyle\sum_{i=1}^{n-2}(D_i-1)\lambda_i+(A-1)\sigma^{+}+(C-1)\sigma^-,\\
&&\lambda_i=(1,1,\cdots,1,0,\cdots,0),\ i=1,2,\cdots,n-2,\\
&&\sigma^{\pm}=(\displaystyle\frac{1}{2},\displaystyle\frac{1}{2},\cdots,\pm\displaystyle\frac{1}{2}),
\end{eqnarray*}
we know that $D_1=j+1,\ D_i=1,\ i=2,\cdots,n-2$ and $A=C=1$. There is exactly one possibility for all entries but the last column in the table, for which there is a sequence of possibilities indexed by a nonnegative integer $b$. The last column corresponds to the $(2b+2n+2j-2)$-th order conformally invariant differential operator, with 
$$d=j+n-2,\ a=c=1,\ e=b+j+n,\ B=b,\ B'=-b-2j-2n+2,$$
and
conformal weights $\omega=1-b-j$ and $\omega'=b+j+2n-1$. Hence, we have
\begin{eqnarray*}
&&\mathcal{D}_{1,2b+2n+2j-2} T_{\lambda,1-b-j}=T_{\lambda,b+j+2n-1}\mathcal{D}_{1,2b+2n+2j-2},\\
&&i.e.,\ \mathcal{D}_{1,2b+2n+2j-2}||cx+d||^{2j+2b-2}=||cx+d||^{-2b-2j-4n+2}\mathcal{D}_{1,2b+2n+2j-2}.
\end{eqnarray*}
To make the above intertwining operators coincide with the forms of the intertwining operators we have at the end of Section 3, we let $2b+2n+2j-2=2s$ and since $m=2n$, we have
$$\mathcal{D}_{1,2s}||cx+d||^{2s-m}=||cx+d||^{-m-2s}\mathcal{D}_{1,2s}.$$\\
 \par
 %%%%%%%% Mj  %%%%%%%%%
\textbf{Half-integer Spin Case: $C^{\infty}(\Rm,\Mj)\longrightarrow C^{\infty}(\Rm,\Mj)$}\\
\par
 As an irreducible representation of $Spin(m)$, $\Mj$ has highest weight as $n$-tuple $\lambda=\lambda'=(j+\displaystyle\frac{1}{2},\displaystyle\frac{1}{2},\cdots,\pm\displaystyle\frac{1}{2}).$ The group action $\rho_{\lambda}$ is the action defined as in Section $2.2.3$. From the table and
\begin{eqnarray*}
&&\lambda=\displaystyle\sum_{i=1}^{n-2}(D_i-1)\lambda_i+(A-1)\sigma^{+}+(C-1)\sigma^-,\\
&&\lambda_i=(1,1,\cdots,1,0,\cdots,0),\ i=1,2,\cdots,n-2,\\
&&\sigma^{\pm}=(\displaystyle\frac{1}{2},\displaystyle\frac{1}{2},\cdots,\pm\displaystyle\frac{1}{2}),
\end{eqnarray*}
we can find that $D_1=j+1, D_i=1, i=2,\cdots,n-2$ and $A=1,C=0$( or $A=0, C=1$ depending on the last entry of $\lambda$ is $\displaystyle\frac{1}{2}$ or $-\displaystyle\frac{1}{2}$). Similar as the previous case, there is just one possibility for all but the last column in the table and there is a sequence of possibilities indexed by a nonnegative integer $b$ for the last column. The last column corresponds to the $(2j+2n+2b-3)$-th order conformally invariant differential operator, with
$$d=j+n-2,\ a=1,c=0\ (or\ a=0,\ c=1),\ e=b+j+n-1,\ B=b,\ B'=-2j-2n-b+3,$$
and conformal weights $\omega=-b-j+\displaystyle\frac{3}{2}$ and $\omega'=j+2n+b-\displaystyle\frac{3}{2}$. Hence, we have
\begin{eqnarray*}
\mathcal{D}_{1,2j+2n+2b-3} T_{\lambda,-b-j+\frac{3}{2}}=T_{\lambda,j+2n+b-\frac{3}{2}}\mathcal{D}_{1,2j+2n+2b-3},
\end{eqnarray*}
in other words,
\begin{eqnarray*}
\mathcal{D}_{1,2j+2n+2b-3}||cx+d||^{2b+2j-3}\frac{\widetilde{cx+d}}{||cx+d||}=||cx+d||^{-2j-4n-2b+3}\frac{\widetilde{cx+d}}{||cx+d||}\mathcal{D}_{1,2j+2n+2b-3}.
\end{eqnarray*}
To make the above intertwining operators coincide with the intertwining operators we have at the end of Section 3, we let $2j+2n+2b-3=2s+1$ and since $m=2n$, we have
$$\mathcal{D}_{1,2s+1}\displaystyle\frac{\widetilde{cx+d}}{||cx+d||^{m-2s}}=\displaystyle\frac{\widetilde{cx+d}}{||cx+d||^{m+2s+2}}\mathcal{D}_{1,2s+1}.$$

Similar arguments apply for the odd dimensional cases. This establishes existence of the conformally invariant differential operators we wish to consider. Further, even order conformally invariant differential operators only exist between $C^{\infty}(\Rm,\Hj)$ and odd order ones only exist between $C^{\infty}(\Rm,\Mj)$. Intertwining operators of conformally invariant differential operators in Theorem \ref{interwining op of D1k} can also be recovered. Once we establish conformal invariance of the operators that we construct between the desired higher spin spaces, uniqueness up to multiplicative constant of these higher order higher spin operators is established by the preceding theorems.
%%%%%%%%%%%%%%%%
\subsection{Construction and conformal invariance}
%\par By arguments of Slov\'{a}k \cite{J}, for integers $j\ge0$ and $k>0$ there exist conformally invariant differential operators in the higher spin setting $$\mathcal{D}_{j,k}:\ C^{\infty}(\Rm, U)\longrightarrow C^{\infty}(\Rm, U),$$ where $U=\mathcal{H}_j$ if $k$ is even and $U=\mathcal{M}_j$ if $k$ is odd. %

We have established by arguments of Slov\'{a}k \cite{J} and Sou\v{c}ek \cite{VS-ref}, for integers $j\ge0$ and $k>0$ there exist conformally invariant differential operators in the higher spin setting $$\mathcal{D}_{j,k}:\ C^{\infty}(\Rm, U)\longrightarrow C^{\infty}(\Rm, U),$$ where $U=\mathcal{H}_j$ if $k$ is even and $U=\mathcal{M}_j$ if $k$ is odd. We introduce some nomenclature suggestive of massless spin fields in mathematical physics, which we hope is adopted by others studying higher spin theory in Clifford analysis. As a Spin representation $\mathcal{H}_j$ is associated with integer spin $j$ and particles of integer spin are called bosons, so the operators $\mathcal{D}_{j,k}:\ C^{\infty}(\Rm, \mathcal{H}_j)\longrightarrow C^{\infty}(\Rm, \mathcal{H}_j)$ are named \emph{bosonic operators}. Thus in the spin 0 case we have the Laplace operator and its $k$-powers, the spin 1 case the Maxwell operator and its generalization to order $k=2n$, and general higher spin Laplace operators and their generalization to order $k=2n$.  Correspondingly, as a Spin representation $\mathcal{M}_j$ is associated with half-integer spin $j+\frac{1}{2}$ and particles of half-integer spin are called fermions, so the operators $\mathcal{D}_{j,k}:\ C^{\infty}(\Rm, \mathcal{M}_j)\longrightarrow C^{\infty}(\Rm, \mathcal{M}_j)$ are named \emph{fermionic operators}. Thus in the spin $\frac{1}{2}$ case we have the Dirac operator and its $k=2n+1$ powers, the spin $\frac{3}{2}$ case the simplest Rarita-Schwinger operator and its generalization to order $k=2n+1$, and general Rarita-Schwinger operators and their generalization to order $k=2n+1$. Note that our notation indexes according to degree of homogeneity of the target space $j$ and differential order $k$, so fractions are not used in the notation; if we indexed according to spin, fractional spins would need to be used for odd order operators.

%Our scheme emphasizes the relation between the mathematics and physics, avoids ambiguities if one attempted to discuss iterated higher spin operators when their powers are not used, simply references all higher order higher spin operators obtained by varying the first component of the highest weight vector of the spin representation, and eliminates disputes about naming priority ("boson" and "fermion" are named for physicists, but this not emphasized).% 

We will consider the higher order spin 1 and spin $\frac{3}{2}$ operators $\DD: C^{\infty}(\Rm, U)\longrightarrow C^{\infty}(\Rm, U),$
where $U=\Ho$ for $k$ even and $U=\mathcal{M}_1$ for $k$ odd. Note that the target space $U$ here is a function space. That means any element in $C^{\infty}(\Rm,U)$ is of the form $f(x,u)$ with $f(x,u)\in U$ for each fixed $x\in\Rm$ and $x$ is the variable which $\DD$ acts on. The construction and conformal invariance of these two operators are considered as follows.
 
 %%%%%%% even case %%%%%%%%%%
\subsubsection*{$k$ even, $k=2n$, $n>1$ (The bosonic case)}

\begin{theorem}
For positive integer $n$, the unique $2n$-th order conformally invariant differential operator of spin-$1$ $\mathcal{D}_{1,2n}:C^{\infty}(\mathbb{R}^m,\mathcal{H}_1)\longrightarrow C^{\infty}(\mathbb{R}^m,\mathcal{H}_1)$ has the following form, up to a multiplicative constant:
\begin{eqnarray*}
\mathcal{D}_{1,2n}=\Delta_x^n-\frac{4n}{m+2n-2}\langle u,D_x\rangle \langle D_u,D_x\rangle\Delta_x^{n-1}.
\end{eqnarray*}
\end{theorem}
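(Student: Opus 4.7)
The existence of a unique (up to scale) conformally invariant differential operator of order $2n$ from $C^\infty(\mathbb{R}^m,\mathcal{H}_1)$ to itself is guaranteed by Theorem \ref{VS1} and the specialization carried out in Section 3.2.3, so the task reduces to exhibiting any one such operator. My plan is the ansatz-plus-generalized-symmetry strategy used in \cite{E} for the Maxwell operator and in \cite{B1} for the higher spin Laplace operators. The natural ansatz comes from listing the $Spin(m)$-equivariant scalar differential operators of order $2n$ built from $x,u,D_x,D_u$ that send $\mathcal{H}_1$-valued functions to $\mathcal{H}_1$-valued functions; after reducing modulo the identities $\langle D_u,D_u\rangle f=0$ and $\langle u,D_u\rangle f=f$ valid on $\mathcal{H}_1$, the two essentially independent candidates at this order are $\Delta_x^n$ and $\langle u,D_x\rangle\langle D_u,D_x\rangle\Delta_x^{n-1}$, giving
$$\mathcal{D}_{1,2n}=\Delta_x^n+B\,\langle u,D_x\rangle\langle D_u,D_x\rangle\Delta_x^{n-1}$$
after normalizing the leading coefficient to one.

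To determine $B$ I would impose conformal invariance infinitesimally. Translation, rotation and dilation invariance are automatic from the $\mathbb{R}^m\rtimes Spin(m)$-equivariance of each building block and from the homogeneity count matching the input conformal weight computed in Section 3.2.3. The single non-trivial constraint comes from the special conformal generators $K_i$, the infinitesimal versions of the inversion $x\mapsto x/\lVert x\rVert^2$, which act on $C^\infty(\mathbb{R}^m,\mathcal{H}_1)$ through a differential operator containing both $x$- and $u$-pieces because of the reflected $u$-variable described in Section 3.1. Following \cite{E,B1}, I would require each $K_i$ to be a \emph{generalized symmetry} of $\mathcal{D}_{1,2n}$, that is, $\mathcal{D}_{1,2n}K_i=\eta_i\mathcal{D}_{1,2n}$ for some first-order operator $\eta_i$.

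The coefficient $B$ is then extracted by direct calculation of $[\mathcal{D}_{1,2n},K_i]$, using the Weyl-algebra identities $[\Delta_x,\lVert x\rVert^2]=2m+4\langle x,\nabla_x\rangle$ and $[\Delta_x,x_i]=2\partial_{x_i}$ together with the commutators of $K_i$ with $\langle u,D_x\rangle$ and $\langle D_u,D_x\rangle$. After moving all $\Delta_x^{n-1}$ factors to the right, everything collapses into two independent tails whose cancellation is a single linear equation in $B$ with unique solution $B=-4n/(m+2n-2)$. Uniqueness from Theorem \ref{VS} then identifies the exhibited operator with the conformally invariant one predicted by Slov\'ak.

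The main obstacle is the bookkeeping in the last step: the commutator $[K_i,\langle u,D_x\rangle\langle D_u,D_x\rangle\Delta_x^{n-1}]$ produces many cross-terms mixing $x$- and $u$-derivatives, and one must segregate those absorbable into $\eta_i\mathcal{D}_{1,2n}$ from those genuinely constraining $B$. I would organize the calculation by induction on $n$, with the base case $n=1$ being the Maxwell-type operator of \cite{E}, so that at step $n$ only the new contributions at order $2n$ relative to order $2(n-1)$ need to be tracked explicitly; the final coefficient is then read off from a single non-absorbable monomial such as the $u_i\Delta_x^{n-1}\partial_{x_i}$-term.
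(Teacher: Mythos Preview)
Your proposal is correct and follows essentially the same route as the paper: the paper also fixes the ansatz $\Delta_x^n+B\,\langle u,D_x\rangle\langle D_u,D_x\rangle\Delta_x^{n-1}$, introduces the special conformal generator $\mathcal{C}_{2n}=\mathcal{J}_{2n}\partial_{x_j}\mathcal{J}_{2n}$ (your $K_i$), and determines $B$ by computing $[\Delta_x^n,\mathcal{C}_{2n}]$ and $[\langle u,D_x\rangle\langle D_u,D_x\rangle\Delta_x^{n-1},\mathcal{C}_{2n}]$ separately (the first by induction on $n$, exactly as you suggest), then matching terms so that the commutator is a left multiple of $\mathcal{D}_{1,2n}$. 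Uniqueness is likewise drawn from the Slov\'ak/Sou\v{c}ek classification.
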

For the case $n=1$, we retrieve the Maxwell operator from \cite{E}.\\
\par
Our proof of conformal invariance of this operator follows closely the method of \cite{E}. In order to explain what conformal invariance means, we begin with the concept of a generalized symmetry (see for instance \cite{Eastwood}):
\begin{definition}
An operator $\eta_1$ is a generalized symmetry for a differential operator $\mathcal{D}$ if and only if there exists another operator $\eta_2$ such that $\mathcal{D}\eta_1=\eta_2\mathcal{D}$. Note that for $\eta_1=\eta_2$, this reduces to a definition of a (proper) symmetry: $\mathcal{D}\eta_1=\eta_1\mathcal{D}$.
\end{definition}
One determines the first order generalized symmetries of an operator, which span a Lie algebra \cite{E,Miller}. In this case, the first order symmetries will span a Lie algebra isomorphic to the conformal Lie algebra $\mathfrak{so}(1,m+1)$; in this sense, the operators we consider are conformally invariant. The operator $\mathcal{D}_{1,2n}$ is $\mathfrak{so}(m)$-invariant (rotation-invariant) because it is the composition of $\mathfrak{so}(m)$-invariant (rotation-invariant) operators, which means the angular momentum operators $L_{ij}^x+L_{i,j}^u$ that generate these rotations are proper symmetries of $\mathcal{D}_{1,2n}$.
% $[\mathcal{D}_{1,2n},L_{ij}^x+L_{i,j}^u]=0.$ 
 The infinitesimal translations $\partial_{x_j}, j=1,\cdots , n,$ corresponding to linear momentum operators are proper  symmetries of $\mathcal{D}_{1,2n}$; this is an alternative way to say that $\mathcal{D}_{1,2n}$ is invariant under translations that are generated by these infinitesimal translations. Readers familiar with quantum mechanics will recognize the connection to isotropy and homogeneity of space, the rotational and translational invariance of Hamiltonian, and the conservation of angular and linear momentum \cite{Sakurai}; see also \cite{Br} concerning Rarita-Schwinger operators. 
 
 The remaining two of the first order generalized symmetries of $\mathcal{D}_{1,2n}$ are the Euler operator and special conformal transformations. The Euler operator $\mathbb{E}_x$ that measures degree of homogeneity in $x$ is a generalized symmetry because $\mathcal{D}_{1,2n}\mathbb{E}_x=(\mathbb{E}_x+2n)\mathcal{D}_{1,2n}$; this is an alternative way to say that $\mathcal{D}_{1,2n}$ is invariant under dilations, which are generated by the Euler operator. The special conformal transformations are defined in Lemma \ref{SCT} in terms of harmonic inversion for $\Ho$-valued functions; harmonic inversion is defined in Definition \ref{HI} and is an involution mapping solutions of $\mathcal{D}_{1,2n}$ to $\mathcal{D}_{1,2n}$. Readers familiar with conformal field theory will recognize that invariance under dilation corresponds to scale-invariance and that special conformal transformations are another class of conformal transformations arising on spacetime \cite{CFT}. An alternative method of proving conformal invariance of $\mathcal{D}_{1,2n}$ is to prove the invariance of $\mathcal{D}_{1,2n}$ under those finite transformations generated by these first order generalized symmetries (rotations, dilations, translations, and special conformal transformations) to show invariance of $\mathcal{D}_{1,2n}$ under actions of the conformal group; this may be phrased in terms of M\"obius transformations and the Iwasawa decomposition. However, the first-order generalized symmetry method emphasizes the connection to mathematical physics and is more amenable to our proof of a certain property of harmonic inversion. It is also that used by earlier authors \cite{B1,E}. 

\begin{definition} \label{HI}
The harmonic inversion is a conformal transformation defined as
\begin{eqnarray*}
\mathcal{J}_{2n}:C^{\infty}(\mathbb{R}^m,\mathcal{H}_1)\longrightarrow C^{\infty}(\mathbb{R}^m,\mathcal{H}_1):f(x,u)\mapsto \mathcal{J}_{2n}[f](x,u):=||x||^{2n-m}f(\frac{x}{||x||^2},\frac{xux}{||x||^2}).
\end{eqnarray*}
\end{definition}
Note that this inversion consists of Kelvin inversion $\mathcal{J}$ on $\mathbb{R}^m$ in the variable $x$ composed with a reflection $u\mapsto \omega u\omega$ acting on the dummy variable $u$ (where $x=||x||\omega$) and a multiplication by a conformal weight term $||x||^{2n-m}$; it satisfies $\mathcal{J}_{2n}^2=1.$\\
\par
Then we have the special conformal transformation defined in the following lemma. The definition is an infinitesimal version of the fact that finite special conformal transformations consist of a translation preceded and followed by an inversion \cite{CFT}: an infinitesimal translation preceded and followed by harmonic inversion. The second equality in the lemma shares some terms in common with the generators of special conformal transformations in conformal field theory \cite{CFT}, and is a particular case of a result in \cite{ER}.

\begin{lemma} \label{SCT}
The special conformal transformation defined as $\mathcal{C}_{2n}:=\mathcal{J}_{2n}\partial_{x_j}\mathcal{J}_{2n}$ satisfies
\begin{eqnarray*}
\mathcal{C}_{2n}=2\langle u,x\rangle\partial_{u_j}-2u_j\langle x,D_u\rangle +||x||^2\partial_{x_j}-x_j(2\mathbb{E}_x+m-2n).
\end{eqnarray*}
\end{lemma}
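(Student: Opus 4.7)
Plan: The lemma is a direct computation of the conjugation of $\partial_{x_j}$ by the involution $\mathcal{J}_{2n}$. My plan is to carry out this computation explicitly and to identify the single simplification that makes the final expression match the claim.

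First, I will rewrite the reflection in $u$ using the Clifford identity $xux/||x||^2 = u - 2\langle u,x\rangle x/||x||^2$, so that $\mathcal{J}_{2n}[f](x,u) = ||x||^{2n-m} f(y(x), v(x,u))$ with $y_i(x) = x_i/||x||^2$ and $v_i(x,u) = u_i - 2\langle u,x\rangle x_i/||x||^2$. Then I will apply $\partial_{x_j}$ via the chain rule. The derivative produces three classes of terms: (i) the prefactor piece $(2n-m) x_j ||x||^{2n-m-2} f$; (ii) terms containing $\partial y_i/\partial x_j = \delta_{ij}/||x||^2 - 2 x_i x_j/||x||^4$ paired with the partial derivatives of $f$ in its first argument slot; and (iii) terms containing $\partial v_i/\partial x_j = -2 u_j x_i/||x||^2 - 2 \langle u,x\rangle(\delta_{ij}/||x||^2 - 2 x_i x_j/||x||^4)$ paired with the partial derivatives of $f$ in its second argument slot.

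Second, I will apply $\mathcal{J}_{2n}$ to the expression just obtained. The involutivity of $\mathcal{J}_{2n}$ is pivotal here: under the substitution $x \mapsto \tilde x := x/||x||^2$ and $u \mapsto \tilde u := xux/||x||^2$, one has $y(\tilde x) = x$, and a short check using $\langle v(x,u), x\rangle = -\langle u,x\rangle$ shows $v(\tilde x, \tilde u) = u$. Hence every partial derivative of $f$ gets evaluated at $(x,u)$ and becomes $\partial_{x_i} f$ or $\partial_{u_i} f$. The conformal factors collapse as $||x||^{2n-m}\cdot||\tilde x||^{2n-m} = 1$ and $||x||^{2n-m}\cdot||\tilde x||^{2n-m-2}= ||x||^2$, and the contributions from classes (i) and (ii) immediately produce the piece $(2n-m)x_j + ||x||^2\partial_{x_j} - 2x_j \mathbb{E}_x$ of the claimed formula, using $\sum_i x_i \partial_{x_i} = \mathbb{E}_x$.

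The main obstacle is organizing class (iii) cleanly. After substitution, the factor $\tilde u_j = u_j - 2\langle u,x\rangle x_j/||x||^2$ and the inner product $\langle \tilde u, \tilde x\rangle = -\langle u,x\rangle/||x||^2$ combine to produce several terms mixing $x$, $u$, and the contraction $\langle x, D_u\rangle$. The critical observation that makes everything collapse is that the two quadratic contributions of the form $\pm 4\langle u,x\rangle x_j \langle x, D_u\rangle f / ||x||^2$ cancel exactly, leaving only $2\langle u,x\rangle \partial_{u_j} f - 2 u_j \langle x, D_u\rangle f$. Combining this with the output of the previous step yields the stated formula. Throughout, only scalar chain-rule computations are needed; the Clifford structure enters solely through the reflection formula for $xux$ and is otherwise transparent.
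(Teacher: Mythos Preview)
Your proposal is correct: the chain-rule computation you outline goes through exactly as you describe, with the key cancellation of the $\pm 4\langle u,x\rangle x_j\langle x,D_u\rangle/||x||^2$ terms being the only nontrivial step. This is essentially the same direct calculation the paper has in mind when it refers to Proposition~A.1 of \cite{B1}; your write-up simply makes that computation explicit rather than deferring to the reference.
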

\begin{proof}
A similar calculation as in \emph{Proposition A.1} in \cite{B1} will show the conclusion.
\end{proof}
Then, we have the main proposition as follows.
\begin{proposition}\label{propeven}
The special conformal transformations $\mathcal{C}_{2n}$, with $j\in\{1,2,\dots,m\}$ are generalized symmetries of $\mathcal{D}_{1,2n}$. More specifically,
\begin{eqnarray*}
[\mathcal{D}_{1,2n},\mathcal{C}_{2n}]=-4nx_j\mathcal{D}_{1,2n}.
\end{eqnarray*}
 In particular, this shows that 
\begin{eqnarray}\label{crucialeven}
\mathcal{J}_{2n}\mathcal{D}_{1,2n}\mathcal{J}_{2n}=||x||^{4n}\mathcal{D}_{1,2n},
\end{eqnarray}
 which is the generalization of the case of the classical higher order Laplace operator $\Delta_x^n$ \cite{Ax}. This also implies $\mathcal{D}_{1,2n}$ is invariant under inversion.
 \end{proposition}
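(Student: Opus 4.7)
The proof splits into two parts: first, establish the commutator identity $[\mathcal{D}_{1,2n},\mathcal{C}_{2n}] = -4nx_j\,\mathcal{D}_{1,2n}$ by a direct calculation, and second, deduce the finite conformal-weight identity~(\ref{crucialeven}) using the defining relation $\mathcal{C}_{2n} = \mathcal{J}_{2n}\partial_{x_j}\mathcal{J}_{2n}$ together with the obvious proper symmetry $[\mathcal{D}_{1,2n},\partial_{x_j}] = 0$.

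For the commutator, I would write $\mathcal{D}_{1,2n} = \Delta_x^n - \lambda\,\langle u,D_x\rangle\langle D_u,D_x\rangle\Delta_x^{n-1}$ with $\lambda = 4n/(m+2n-2)$ and expand $\mathcal{C}_{2n}$ into its four summands $A_1 = 2\langle u,x\rangle\partial_{u_j}$, $A_2 = -2u_j\langle x,D_u\rangle$, $A_3 = \|x\|^2\partial_{x_j}$, and $A_4 = -x_j(2\mathbb{E}_x + m - 2n)$. Each of the eight resulting commutators $[\Delta_x^n,A_i]$ and $[\langle u,D_x\rangle\langle D_u,D_x\rangle\Delta_x^{n-1}, A_i]$ reduces to the elementary identities $[\Delta_x,x_i] = 2\partial_{x_i}$, $[\Delta_x,\|x\|^2] = 2m + 4\mathbb{E}_x$, $[\mathbb{E}_x,\Delta_x] = -2\Delta_x$, and $[\Delta_x,\langle u,x\rangle] = 2\langle u,D_x\rangle$, together with their iterates $[\Delta_x^k,x_i] = 2k\,\partial_{x_i}\Delta_x^{k-1}$ and $[\Delta_x^k,\|x\|^2] = 2k(2\mathbb{E}_x + m + 2k - 2)\Delta_x^{k-1}$, and the trivial commutation of $u$-operators with everything involving only $x$. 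The pairing of $A_3+A_4$ with $\Delta_x^n$ reproduces the classical scalar higher-order Laplace computation of \cite{Ax} and yields a clean contribution proportional to $x_j\Delta_x^n$; the pairing of $A_1+A_2$ with $\Delta_x^n$ produces the mixed term $x_j\langle u,D_x\rangle\langle D_u,D_x\rangle\Delta_x^{n-1}$ plus intermediate remainders of the forms $u_j\langle D_x,D_u\rangle\Delta_x^{n-1}$ and $\langle u,D_x\rangle\partial_{x_j}\Delta_x^{n-1}$; the analogous pairings with the second summand of $\mathcal{D}_{1,2n}$ cancel precisely these remainders, and the numerical constant $\lambda = 4n/(m+2n-2)$ is exactly the value that makes the total collapse to $-4nx_j\,\mathcal{D}_{1,2n}$.

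To deduce (\ref{crucialeven}), set $\tilde{\mathcal{D}} := \mathcal{J}_{2n}\mathcal{D}_{1,2n}\mathcal{J}_{2n}$ and observe that, using $\mathcal{J}_{2n}^2 = I$, $\mathcal{J}_{2n}\mathcal{C}_{2n}\mathcal{J}_{2n} = \partial_{x_j}$, and the elementary $\mathcal{J}_{2n}x_j\mathcal{J}_{2n} = x_j/\|x\|^2$, conjugation of the commutator identity by $\mathcal{J}_{2n}$ yields $[\tilde{\mathcal{D}},\partial_{x_j}] = -4n(x_j/\|x\|^2)\tilde{\mathcal{D}}$; a direct check shows $\|x\|^{4n}\mathcal{D}_{1,2n}$ satisfies the identical relation, so the difference $R := \tilde{\mathcal{D}} - \|x\|^{4n}\mathcal{D}_{1,2n}$ also satisfies it, and a short calculation then gives that $R/\|x\|^{4n}$ commutes with every translation. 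Combining this translation-invariance of $R/\|x\|^{4n}$ with the shared $SO(m)$-invariance of $\tilde{\mathcal{D}}$ and $\|x\|^{4n}\mathcal{D}_{1,2n}$, and with their matching behaviour under the dilation generator $\mathbb{E}_x$, the uniqueness portion of Section~3.2.3 forces $R \equiv 0$, which is exactly (\ref{crucialeven}). The main obstacle is the bookkeeping in the commutator step: numerous intermediate cross-terms involving $x_j\partial_{x_i}\Delta_x^{n-1}$, $u_j\langle D_x,D_u\rangle\Delta_x^{n-1}$, and $\langle u,x\rangle\partial_{x_j}\Delta_x^{n-1}$ appear, and only the precisely tuned constant $\lambda = 4n/(m+2n-2)$ makes them telescope into the clean right-hand side; an off-by-one in any iterated identity would leave a visible residual, which is how one verifies correctness in practice.
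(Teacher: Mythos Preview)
Your commutator computation is correct in spirit and parallel to the paper's, though organized differently: the paper partitions $\mathcal{D}_{1,2n}$ rather than $\mathcal{C}_{2n}$, proving two separate lemmas for $[\Delta_x^n,\mathcal{C}_{2n}]$ and $[\langle u,D_x\rangle\langle D_u,D_x\rangle\Delta_x^{n-1},\mathcal{C}_{2n}]$, each by induction on $n$ via the recursion $\mathcal{C}_{2n}=\mathcal{C}_{2n-2}+2x_j$ and the $n=1$ identities from \cite{B1}. Your direct expansion of $\mathcal{C}_{2n}$ into $A_1,\dots,A_4$ and use of iterated bracket formulas such as $[\Delta_x^k,x_i]=2k\,\partial_{x_i}\Delta_x^{k-1}$ reaches the same endpoint; the inductive packaging is slightly cleaner bookkeeping but not an essentially different idea.

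Your deduction of~(\ref{crucialeven}), however, has a genuine gap. The uniqueness statement from Section~3.2 asserts that a conformally invariant operator of the given type is unique only \emph{up to a scalar multiple}. First, you invoke that uniqueness for $S:=\|x\|^{-4n}R$ having checked only translation-invariance, $SO(m)$-invariance, and dilation behaviour; you have not verified that $S$ satisfies the special conformal relation $[S,\mathcal{C}_{2n}]=-4nx_jS$, and without that, conformal uniqueness does not apply (there is a two-dimensional space of translation- and rotation-invariant order-$2n$ operators on $C^\infty(\mathbb{R}^m,\mathcal{H}_1)$, spanned by $\Delta_x^n$ and $\langle u,D_x\rangle\langle D_u,D_x\rangle\Delta_x^{n-1}$). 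Second, even granting full conformal invariance of $S$, uniqueness yields only $S=c\,\mathcal{D}_{1,2n}$, hence $\tilde{\mathcal{D}}=(1+c)\|x\|^{4n}\mathcal{D}_{1,2n}$; it does not force $R\equiv 0$. The argument is repairable: work with $\|x\|^{-4n}\tilde{\mathcal{D}}$ directly rather than with the difference $R$, supply the missing special-conformal check, conclude proportionality, then use $\mathcal{J}_{2n}^2=1$ to get $c'^2=1$ and fix the sign by inspecting the top-order symbol. The paper avoids all of this by citing the Slov\'ak--Sou\v{c}ek classification (Theorem~\ref{VS}) directly: once the commutator identity establishes conformal invariance of $\mathcal{D}_{1,2n}$, the explicit intertwining relations already recorded in Section~3.2.3 for the inversion element of the M\"obius group give~(\ref{crucialeven}) with the correct constant at once.
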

 If the main proposition holds, then the conformal invariance can be summarized in the following theorem:
 \begin{theorem}\label{theoremeven}
 The first order generalized symmetries of $\mathcal{D}_{1,2n}$ are given by:
 \begin{enumerate}
 \item The infinitesimal rotation $L_{i,j}^x+L_{i,j}^u$, with $1\leq i<j\leq m$.
 \item The shifted Euler operator $(\mathbb{E}_x+\displaystyle\frac{m-2n}{2})$.
 \item The infinitesimal translations $\partial_{x_j}$, with $1\leq j\leq m$.
 \item The special conformal transformations $\mathcal{J}_{2n}\partial_{x_j}\mathcal{J}_{2n}$, with $1\leq j\leq m$.
 \end{enumerate}
 These operators span a Lie algebra which is isomorphic to the conformal Lie algebra $\mathfrak{so}(1,m+1)$, whereby the Lie bracket is the ordinary commutator.
 \end{theorem}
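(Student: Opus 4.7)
The plan is to follow the strategy used for the Maxwell operator in \cite{E} and for higher spin Laplace operators in \cite{B1}: verify that each of the four listed operators is indeed a generalized symmetry of $\DD$, argue that no further first-order generalized symmetries exist, and then compute the Lie bracket relations to identify the algebra with $\mathfrak{so}(1,m+1)$. Since the proposition is stated under the assumption that Proposition \ref{propeven} holds, the special conformal symmetries are already handled; what remains is to dispatch items (1)--(3), to justify the completeness of the list, and to compute the bracket table.

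First I would verify items (1)--(3). The rotation invariance follows because $\DD=\Delta_x^n-\frac{4n}{m+2n-2}\langle u,D_x\rangle\langle D_u,D_x\rangle\Delta_x^{n-1}$ is built from $\mathfrak{so}(m)$-invariant pieces $\Delta_x$, $\langle u,D_x\rangle$, $\langle D_u,D_x\rangle$, so the total angular momenta $L_{ij}^x+L_{ij}^u$ commute term by term with $\DD$ and are proper symmetries. The infinitesimal translations are trivially proper symmetries since the coefficients of $\DD$ are constant in $x$. For the shifted Euler operator, I use that both summands of $\DD$ are homogeneous of degree $-2n$ in $x$, so $[\mathbb{E}_x,\DD]=-2n\DD$; consequently $\DD(\mathbb{E}_x+c)=(\mathbb{E}_x+c-2n)\DD$ for any scalar $c$, making $\mathbb{E}_x+\frac{m-2n}{2}$ a generalized symmetry, where the specific shift is fixed by the requirement that the brackets against translations and special conformal transformations balance. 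Item (4) is exactly the statement of Proposition \ref{propeven}.

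The main obstacle is the uniqueness/completeness claim: that the listed operators exhaust the first-order generalized symmetries. Here I would adopt the ansatz approach from \cite{E,Miller}: write an arbitrary first-order differential operator $\eta_1=A^j(x,u)\partial_{x_j}+B^j(x,u)\partial_{u_j}+C(x,u)$ acting on $C^\infty(\Rm,\Ho)$ and require the existence of $\eta_2$ of the same form with $\DD\eta_1=\eta_2\DD$. Expanding $\DD\eta_1-\eta_2\DD$ and separating by differential order in $x$ and $u$ produces a hierarchy of PDEs on the unknown coefficients; the leading symbol constraint forces $A^j$ to be at most quadratic in $x$ and linear in $u$, and $B^j$ linear in each, after which the subleading constraints pin down the polynomial shape completely. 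The resulting solution space has dimension $\binom{m+2}{2}=\dim\mathfrak{so}(1,m+1)$, precisely matching the count $\binom{m}{2}+1+m+m$ of the four families. The bookkeeping is essentially the calculation done in \cite{E} for $n=1$ and in \cite{B1} for general spin, with $\Delta_x$ replaced by $\Delta_x^n$ and the correction term adjusted; the algebra is laborious but mechanical.

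Finally I would verify the Lie algebra structure by direct bracket computation. The rotations close under bracket to $\mathfrak{so}(m)$; translations commute among themselves and the special conformal transformations commute among themselves; the shifted Euler operator satisfies $[\mathbb{E}_x+\frac{m-2n}{2},\partial_{x_j}]=-\partial_{x_j}$ and $[\mathbb{E}_x+\frac{m-2n}{2},\mathcal{J}_{2n}\partial_{x_j}\mathcal{J}_{2n}]=\mathcal{J}_{2n}\partial_{x_j}\mathcal{J}_{2n}$, using that $\mathcal{J}_{2n}\mathbb{E}_x\mathcal{J}_{2n}=2n-m-\mathbb{E}_x$ on the appropriate space; and the mixed bracket $[\partial_{x_i},\mathcal{J}_{2n}\partial_{x_j}\mathcal{J}_{2n}]$ yields a linear combination of a rotation $L_{ij}^x+L_{ij}^u$, the shifted Euler operator, and $\delta_{ij}$ times a scalar, computed from the explicit form of $\mathcal{C}_{2n}$ in Lemma \ref{SCT}. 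These commutation relations reproduce exactly the standard presentation of $\mathfrak{so}(1,m+1)$ as the conformal Lie algebra of $\Rm$ with generators for rotations, dilation, translations, and inversions, completing the identification.
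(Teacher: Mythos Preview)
Your proposal is correct and essentially self-contained, but it takes a different route from the paper's own proof. The paper's proof is a one-line deferral: it says the argument is ``similar as in \cite{ER} via transvector algebras'' and only remarks that the shift $\frac{m-2n}{2}$ is the conformal weight from Section~3.2. In other words, the paper imports the structural machinery of transvector algebras from \cite{ER} to identify the symmetry algebra with $\mathfrak{so}(1,m+1)$ and to justify completeness, rather than carrying out any explicit computation.

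Your approach, by contrast, follows the more elementary line of \cite{E,B1,Miller}: you verify items (1)--(3) directly from the explicit form of $\mathcal{D}_{1,2n}$, invoke Proposition~\ref{propeven} for item (4), run the general first-order ansatz $\eta_1=A^j\partial_{x_j}+B^j\partial_{u_j}+C$ to argue completeness by a dimension count, and then compute the bracket table by hand using the explicit expression for $\mathcal{C}_{2n}$ from Lemma~\ref{SCT}. This buys you a proof that is independent of the transvector-algebra formalism and makes the $n$-dependence (in particular the conformal weight shift) visible through the relation $[\mathbb{E}_x,\mathcal{D}_{1,2n}]=-2n\,\mathcal{D}_{1,2n}$. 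The trade-off is that the ansatz computation for completeness is laborious and you have only sketched it; the transvector-algebra route in \cite{ER} packages that bookkeeping once and for all across the whole family of higher spin operators, which is why the paper prefers it.
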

\begin{proof}
The proof is similar as in \cite{ER} via transvector algebras. Notice that the shift in the shifted Euler operator $\mathbb{E}_x+\omega$ defines the conformal weight (defined in Section $3.2$) $\omega=\displaystyle\frac{m-2n}{2}$.
\end{proof}
\subsubsection*{Detailed proof of Proposition \ref{propeven}:}
First, let us prove a few technical lemmas. It is worth pointing out that since we are dealing with degree-$1$ homogeneous polynomials in $u$, terms involving second derivatives with respect to $u$ disappear.
\begin{lemma}\label{lemma1}
For all $1\leq j\leq m$, we have
\begin{eqnarray*}
[\Delta_x^n,\mathcal{C}_{2n}]=-4nx_j\Delta_x^n+4n\langle u,D_x\rangle\partial_{u_j}\Delta_x^{n-1}-4nu_j\langle D_u,D_x\rangle\Delta_x^{n-1}.
\end{eqnarray*}
\end{lemma}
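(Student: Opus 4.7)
\textbf{Proof plan for Lemma \ref{lemma1}.}

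The plan is to decompose $\mathcal{C}_{2n}$ according to the explicit form given in Lemma \ref{SCT} and evaluate the commutator of $\Delta_x^n$ with each piece separately. Writing
\begin{eqnarray*}
\mathcal{C}_{2n} = \underbrace{2\langle u,x\rangle\partial_{u_j}}_{A_1} \;+\; \underbrace{(-2u_j\langle x,D_u\rangle)}_{A_2} \;+\; \underbrace{||x||^2\partial_{x_j}}_{B_1} \;+\; \underbrace{(-x_j(2\mathbb{E}_x+m-2n))}_{B_2},
\end{eqnarray*}
I would show that $[\Delta_x^n, A_1]$ and $[\Delta_x^n, A_2]$ produce exactly the two $u$-dependent terms on the right-hand side of the lemma, while $[\Delta_x^n, B_1+B_2]$ produces the single term $-4nx_j\Delta_x^n$. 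The workhorse identity, proved by a one-line induction on $n$ from $[\Delta_x,x_i]=2\partial_{x_i}$, is $[\Delta_x^n, x_i] = 2n\,\partial_{x_i}\Delta_x^{n-1}$ for each coordinate $i$.

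For the $u$-pieces, observe that $\partial_{u_j}$, $u_j$, and $\partial_{u_i}$ all commute with $\Delta_x$, so only the $x$-coefficients contribute. Using the identity above,
\begin{eqnarray*}
[\Delta_x^n, A_1] &=& 2\sum_i u_i [\Delta_x^n,x_i]\partial_{u_j} \;=\; 4n\langle u,D_x\rangle\partial_{u_j}\Delta_x^{n-1},\\
[\Delta_x^n, A_2] &=& -2u_j\sum_i [\Delta_x^n,x_i]\partial_{u_i} \;=\; -4n u_j\langle D_u,D_x\rangle\Delta_x^{n-1},
\end{eqnarray*}
which supply the second and third terms in the target formula.

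The main obstacle lies in the $x$-only piece $B_1+B_2$, where I must show $[\Delta_x^n, B_1+B_2] = -4nx_j\Delta_x^n$. To organize this, I would set $P_0 := ||x||^2\partial_{x_j}-x_j(2\mathbb{E}_x+m)$ so that $B_1+B_2 = P_0 + 2n x_j$. From the classical identities $[\Delta_x,||x||^2]=2m+4\mathbb{E}_x$, $[\Delta_x,\mathbb{E}_x]=2\Delta_x$, and $[\partial_{x_j},\mathbb{E}_x]=\partial_{x_j}$, a direct computation yields the base case
\begin{eqnarray*}
[\Delta_x,P_0] = -4\partial_{x_j}-4x_j\Delta_x.
\end{eqnarray*}
Then I would prove by induction on $n$, using $[\Delta_x^{n+1},P_0] = \Delta_x[\Delta_x^n,P_0]+[\Delta_x,P_0]\Delta_x^n$ together with $\Delta_x x_j = x_j\Delta_x + 2\partial_{x_j}$, that
\begin{eqnarray*}
[\Delta_x^n,P_0] = -4n^2\,\partial_{x_j}\Delta_x^{n-1} - 4n\, x_j\Delta_x^n.
\end{eqnarray*}
The $n$-dependent shift $+2nx_j$ hidden in $B_2$ is precisely what makes the bookkeeping collapse: adding $2n[\Delta_x^n,x_j]=4n^2\partial_{x_j}\Delta_x^{n-1}$ cancels the $\partial_{x_j}\Delta_x^{n-1}$ term, leaving $[\Delta_x^n,B_1+B_2] = -4n x_j\Delta_x^n$.

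Summing the three commutators yields the statement. The expected difficulty is not any single commutation identity but rather the careful tracking of the quadratic-in-$n$ coefficient $-4n^2$ in $[\Delta_x^n,P_0]$ and its exact cancellation against the contribution of the conformal-weight correction $+2n x_j$ in $B_2$; this is the point at which the specific choice of shift $m-2n$ in the definition of $\mathcal{C}_{2n}$ becomes essential.
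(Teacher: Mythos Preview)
Your proposal is correct, and each of the commutator identities you state checks out, including the inductive formula $[\Delta_x^n,P_0]=-4n^2\partial_{x_j}\Delta_x^{n-1}-4nx_j\Delta_x^n$ and its cancellation against $2n[\Delta_x^n,x_j]$.

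The route, however, differs from the paper's. The paper does not decompose $\mathcal{C}_{2n}$ into its four summands; instead it runs a single induction on $n$ directly for the full commutator $[\Delta_x^n,\mathcal{C}_{2n}]$, anchored on the base case $[\Delta_x,\mathcal{C}_2]$ taken from \cite{B1} and driven by the shift relations $\mathcal{C}_{2n}=\mathcal{C}_2+(2n-2)x_j=\mathcal{C}_{2n-2}+2x_j$ together with $[AB,C]=A[B,C]+[A,C]B$. Your approach is more self-contained (you derive the base case from the elementary identities $[\Delta_x,\|x\|^2]=2m+4\mathbb{E}_x$, $[\Delta_x,\mathbb{E}_x]=2\Delta_x$, $[\partial_{x_j},\mathbb{E}_x]=\partial_{x_j}$ rather than citing \cite{B1}) and it isolates exactly where the conformal weight $m-2n$ enters, namely in the cancellation of the $-4n^2\partial_{x_j}\Delta_x^{n-1}$ term. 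The paper's approach is shorter and more structural, since the recursion $\mathcal{C}_{2n}=\mathcal{C}_{2n-2}+2x_j$ encodes that same cancellation implicitly and avoids ever writing down the quadratic-in-$n$ coefficient.
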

\begin{proof}
We prove this by induction. First, we have (\cite{B1})
\begin{eqnarray*}
[\Delta_x,\mathcal{C}_{2}]=-4x_j\Delta_x+4\langle u,D_x\rangle\partial_{u_j}-4u_j\langle D_u,D_x\rangle.
\end{eqnarray*}
Assuming the lemma is true for $\Delta^{n-1}$, applying the fact that for general operators $A,\ B$ and $C$
\begin{eqnarray*}
[AB,C]&=&A[B,C]+[A,C]B
\end{eqnarray*}
and
\begin{eqnarray*}
\mathcal{C}_{2n}&=&\mathcal{C}_{2}+(2n-2)x_j,
\end{eqnarray*}
we have 
\begin{eqnarray*}
&&[\Delta_x^n,\mathcal{C}_{2n}]
=\Delta_x^{n-1}[\Delta_x,\mathcal{C}_{2n}]+[\Delta_x^{n-1},\mathcal{C}_{2n}]\Delta_x.
\end{eqnarray*}
Since
$$\mathcal{C}_{2n}=\mathcal{C}_{2n-2}+2x_j,$$
a straightforward calculation leads to the conclusion.
\end{proof}

\begin{lemma}\label{lemma2}
For all $1\leq j\leq m,$ we have
\begin{eqnarray*}
&&[\langle u,D_x\rangle\langle D_u,D_x\rangle \Delta_x^{n-1},\mathcal{C}_{2n}] \\
&=&-4nx_j\langle u,D_x\rangle\langle D_u,D_x\rangle \Delta_x^{n-1}+(m+2n-2)\big(\langle u,D_x\rangle\partial_{u_j}-u_j\langle D_u,D_x\rangle\big)\Delta_x^{n-1}.
\end{eqnarray*}
\end{lemma}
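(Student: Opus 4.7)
The plan is to reduce the commutator to three simpler ones by applying the Leibniz identity
$$[ABC, D] = AB[C, D] + A[B, D]C + [A, D]BC$$
with $A = \langle u, D_x\rangle$, $B = \langle D_u, D_x\rangle$, $C = \Delta_x^{n-1}$, and $D = \mathcal{C}_{2n}$. The observation that makes the first piece tractable is the identity $\mathcal{C}_{2n} - \mathcal{C}_{2n-2} = 2x_j$, which is immediate from Lemma \ref{SCT}. Combined with Lemma 1 at order $n-1$ and the elementary formula $[\Delta_x^{n-1}, x_j] = 2(n-1)\partial_{x_j}\Delta_x^{n-2}$, this yields a closed-form expression for $[\Delta_x^{n-1}, \mathcal{C}_{2n}]$ that features, besides the three terms of the shape expected from Lemma 1, an auxiliary summand $4(n-1)\partial_{x_j}\Delta_x^{n-2}$.

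For the other two commutators I would expand $\mathcal{C}_{2n}$ into its four summands from Lemma \ref{SCT} and compute them term-by-term, using the standard brackets
$$[\langle u,D_x\rangle,\langle x,D_u\rangle] = \mathbb{E}_u-\mathbb{E}_x,\quad [\langle D_u,D_x\rangle,\langle u,x\rangle] = m+\mathbb{E}_u+\mathbb{E}_x,\quad [\langle D_u,D_x\rangle,\langle x,D_u\rangle] = \Delta_u,$$
together with the straightforward brackets of $\langle u, D_x\rangle$ and $\langle D_u, D_x\rangle$ with $\|x\|^2$, $x_j$, $\mathbb{E}_x$, and $u_j$. At this point I invoke the hypothesis that we act on $\mathcal{H}_1$-valued functions: $\mathbb{E}_u f = f$, $\Delta_u f = 0$, and any second $u$-derivative of $f$ vanishes. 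These collapse the raw commutators to compact expressions; in particular, the combination $(m + 2n - 2)$ arises naturally as the coefficient in front of $\partial_{u_j}$, matching the desired target.

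To assemble the three pieces, I would commute $x_j$ and $u_j$ past $\langle u, D_x\rangle\langle D_u, D_x\rangle$ using
$$[\langle u, D_x\rangle, x_j] = u_j, \qquad [\langle D_u, D_x\rangle, x_j] = \partial_{u_j}, \qquad [\langle D_u, D_x\rangle, \langle u, D_x\rangle] = \Delta_x,$$
so that all three contributions are reduced to the canonical monomials $x_j\langle u, D_x\rangle\langle D_u, D_x\rangle\Delta_x^{n-1}$, $\langle u, D_x\rangle\partial_{u_j}\Delta_x^{n-1}$, and $u_j\langle D_u, D_x\rangle\Delta_x^{n-1}$. The main obstacle is tracking various spurious $\partial_{x_j}\Delta_x^{n-2}$ contributions: one arises from the $\mathcal{C}_{2n} - \mathcal{C}_{2n-2}$ shift handled in the first commutator, and an equal and opposite one appears when $\langle u, D_x\rangle\langle D_u, D_x\rangle$ is pushed past the $u_j\langle D_u, D_x\rangle\Delta_x^{n-2}$ summand (via $\langle u, D_x\rangle\langle D_u, D_x\rangle u_j = u_j\langle u, D_x\rangle\langle D_u, D_x\rangle + \langle u, D_x\rangle\partial_{x_j}$, combined with the fact that $\langle D_u, D_x\rangle$ annihilates degree-zero polynomials in $u$). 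These two cancel exactly, and the surviving terms recombine to yield the coefficients $-4n$ and $(m+2n-2)$ asserted in the lemma.
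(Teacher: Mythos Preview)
Your argument is correct and follows essentially the same strategy as the paper: both use the Leibniz rule to peel off $\Delta_x^{n-1}$ and handle $[\Delta_x^{n-1},\mathcal{C}_{2n}]$ via the previous lemma together with the shift $\mathcal{C}_{2n}=\mathcal{C}_{2n-2}+2x_j$. The only difference is granularity: the paper groups $\langle u,D_x\rangle\langle D_u,D_x\rangle$ as a single block and quotes the commutator $[\langle u,D_x\rangle\langle D_u,D_x\rangle,\mathcal{C}_2]$ from \cite{B1}, whereas you split one level further into three factors and compute the commutators with $\mathcal{C}_{2n}$ from scratch---more work, but self-contained and not reliant on the external reference.
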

\begin{proof}
First, we have \cite{B1}:
\begin{eqnarray*}
&&[\langle u,D_x\rangle\langle D_u,D_x\rangle, \mathcal{C}_{2}]\\
&=&2||u||^2\partial_{u_j}\langle D_u,D_x\rangle-4x_j\langle u,D_x\rangle\langle D_u,D_x\rangle+\big(\langle u,D_x\rangle\partial_{u_j}-u_j\langle D_u,D_x\rangle\big)(2\mathbb{E}_u+m-2)\\
&=&-4x_j\langle u,D_x\rangle\langle D_u,D_x\rangle+m\big(\langle u,D_x\rangle\partial_{u_j}-u_j\langle D_u,D_x\rangle\big),
\end{eqnarray*}
 Then
\begin{eqnarray*}
&&[\langle u,D_x\rangle\langle D_u,D_x\rangle \Delta_x^{n-1},\mathcal{C}_{2n}] \\
&=&\langle u,D_x\rangle\langle D_u,D_x\rangle[\Delta_x^{n-1},\mathcal{C}_{2n}]+[\langle u,D_x\rangle\langle D_u,D_x\rangle,\mathcal{C}_{2n}]\Delta_x^{n-1}
\end{eqnarray*}
together with the previous lemma proves the conclusion.
\end{proof}

With the help of \emph{Lemma} \ref{lemma1} and \ref{lemma2}, a straightforward calculation shows that
$$[\mathcal{D}_{1,2n},\mathcal{C}_{2n}]=-4nx_j\mathcal{D}_{1,2n}.$$
Since $\mathcal{D}_{1,2n}$ is conformally invariant by Theorem \ref{theoremeven} and Slovak's results provide the uniqueness and intertwining operators of conformally invariant differential operators, we have 
$$\mathcal{J}_{2n}\mathcal{D}_{1,2n}\mathcal{J}_{2n}=||x||^{4n}\mathcal{D}_{1,2n}$$
from the intertwining operators under (harmonic) inversion. This is a generalization of the Laplacian case \cite{Ax}.
%%%%%%%%         odd case     %%%%%%%%

\subsubsection*{$k$ odd, $k=2n-1$, $n>1$ (The fermionic case)}

\begin{theorem}
For positive integer $n$, the unique $(2n-1)$-th order conformally invariant differential operator of spin-$\frac{3}{2}$ $\mathcal{D}_{1,2n-1}:C^{\infty}(\mathbb{R}^m,\mathcal{M}_1)\longrightarrow C^{\infty}(\mathbb{R}^m,\mathcal{M}_1)$ has the following form, up to a multiplicative constant:
\begin{eqnarray*}
\mathcal{D}_{1,2n-1}=D_x\Delta_x^{n-1}-\frac{2}{m+2n-2}u\langle D_u,D_x\rangle \Delta_x^{n-1}-\frac{4n-4}{m+2n-2}\langle u,D_x\rangle \langle D_u,D_x\rangle \Delta_x^{n-2}D_x.
\end{eqnarray*}
\end{theorem}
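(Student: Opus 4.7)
The proof will mirror the argument for the even-order bosonic case, but with two important adjustments: a fermionic (spinor-valued) inversion replacing the harmonic inversion, and careful bookkeeping of the Clifford non-commutativity $D_u D_x = -D_x D_u - 2\langle D_u,D_x\rangle$ that enters because the target is $\mathcal{M}_1$ rather than $\mathcal{H}_1$. The plan has three stages: (i) verify that $\mathcal{D}_{1,2n-1}$ really does map $C^{\infty}(\mathbb{R}^m,\mathcal{M}_1)$ into itself, (ii) exhibit the first-order generalized symmetries and in particular the special conformal transformations, and (iii) invoke Slov\'{a}k's classification (Theorem \ref{VS2}) to pass from ``this operator is conformally invariant'' to ``this operator is \emph{the} conformally invariant operator of that order, up to a multiplicative constant.''

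For stage (i), since each of the three summands is homogeneous of degree $1$ in $u$, it suffices to show $D_u\mathcal{D}_{1,2n-1}f=0$ for $f\in C^{\infty}(\mathbb{R}^m,\mathcal{M}_1)$. Using $D_u D_x = -D_x D_u - 2\langle D_u,D_x\rangle$, the first summand contributes $-2\langle D_u,D_x\rangle\Delta_x^{n-1}f$; using $D_u(ug)=-mg$ for $g$ constant in $u$, the second summand contributes $\tfrac{2m}{m+2n-2}\langle D_u,D_x\rangle\Delta_x^{n-1}f$; and using $D_u\langle u,D_x\rangle = D_x + \langle u,D_x\rangle D_u$ together with $D_x^2=-\Delta_x$, the third summand contributes $\tfrac{4n-4}{m+2n-2}\langle D_u,D_x\rangle\Delta_x^{n-1}f$. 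The three contributions cancel exactly, which is the reason for the specific coefficients. (For $n=1$ this degenerates to the Rarita--Schwinger projection $R_1=(1+\tfrac{uD_u}{m})D_x$, giving a consistency check.)

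For stage (ii), I introduce the fermionic inversion
\begin{equation*}
\mathcal{J}_{2n-1}:f(x,u)\longmapsto \frac{x}{\|x\|^{m-2n+2}}\,f\!\left(\frac{x}{\|x\|^2},\frac{xux}{\|x\|^2}\right),
\end{equation*}
which is an involution combining Kelvin inversion in $x$, the reflection $u\mapsto \omega u \omega$ on the dummy variable, the Clifford factor $x/\|x\|$ needed for spinor-valued functions, and the conformal weight $\|x\|^{-(m-2n+2)}$ dictated by Section~3.2. Define the special conformal transformation $\mathcal{C}_{2n-1,j}:=\mathcal{J}_{2n-1}\partial_{x_j}\mathcal{J}_{2n-1}$ and compute its explicit form in analogy with Lemma \ref{SCT}; it is a first-order operator in $x,u,\partial_x,\partial_u$ together with a zero-order Clifford term. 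The main technical step is then to prove
\begin{equation*}
[\mathcal{D}_{1,2n-1},\mathcal{C}_{2n-1,j}]\;=\;\mu\, x_j\,\mathcal{D}_{1,2n-1}
\end{equation*}
for an explicit constant $\mu$ depending on $m$ and $n$. I would carry this out by proving separate commutator lemmas for the three constituent operators $D_x\Delta_x^{n-1}$, $u\langle D_u,D_x\rangle\Delta_x^{n-1}$, and $\langle u,D_x\rangle\langle D_u,D_x\rangle\Delta_x^{n-2}D_x$, by induction on $n$, using the two-operator identity $[AB,C]=A[B,C]+[A,C]B$ and the base cases from \cite{B1,E}. Together with the three obvious first-order symmetries (rotations $L_{ij}^x+L_{ij}^u$, translations $\partial_{x_j}$, and the shifted Euler operator $\mathbb{E}_x+\tfrac{m-2n+1}{2}$, whose shift matches the conformal weight for the half-integer spin case), these generalized symmetries span a Lie algebra isomorphic to $\mathfrak{so}(1,m+1)$, proving conformal invariance. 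Uniqueness up to scaling is then immediate from Theorem \ref{VS2}, since the explicit intertwiner derived at the end of Section~3.2 singles out exactly one operator between $C^{\infty}(\mathbb{R}^m,\mathcal{M}_1)$ and itself of order $2n-1$.

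The main obstacle is the commutator computation in stage (ii). Unlike the bosonic case, the new middle summand $u\langle D_u,D_x\rangle\Delta_x^{n-1}$ couples $u$ to $D_u$, and the third summand contains an extra factor of $D_x$; combined with the Clifford non-commutativity $D_u D_x \neq D_x D_u$ and the $u$-reflection built into $\mathcal{J}_{2n-1}$, the intermediate expressions produce cross terms that do \emph{not} individually align with $x_j\mathcal{D}_{1,2n-1}$, but which must cancel in the sum. I expect the identity $-(2n-2)\cdot\tfrac{2}{m+2n-2}+(\text{cross terms from Clifford algebra})=0$ to be the numerical miracle that forces precisely the stated coefficients, paralleling the corresponding bosonic cancellation identified in Lemmas \ref{lemma1} and \ref{lemma2}.
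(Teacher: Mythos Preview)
Your proposal is correct and follows essentially the same route as the paper: define the monogenic inversion $\mathcal{J}_{2n-1}$ and the associated special conformal transformation $\mathcal{C}_{2n-1}$, establish the commutator identity $[\mathcal{D}_{1,2n-1},\mathcal{C}_{2n-1}]=(4n-2)x_j\mathcal{D}_{1,2n-1}$ via three separate lemmas for the three summands, assemble the $\mathfrak{so}(1,m+1)$ symmetry algebra, and invoke Slov\'{a}k's classification for uniqueness. Your stage~(i) verification that $\mathcal{D}_{1,2n-1}$ preserves $\mathcal{M}_1$-valued functions is a worthwhile addition the paper leaves implicit; one minor caution is that $\mathcal{J}_{2n-1}^2=-1$ rather than $+1$, so ``involution'' should be read up to sign.
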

When $n=1$, we have the Rarita-Schwinger operator appearing in \cite{B, D} and elsewhere.\\
\par
The same strategy in the even case applies: we only must show the special conformal transformation defined below is a generalized symmetry of $\mathcal{D}_{1,2n-1}$. We have the definition for monogenic inversion as follows.
\begin{definition}
Monogenic inversion is a conformal transformation defined as
\begin{eqnarray*}
&&\mathcal{J}_{2n+1}\ :\ C^{\infty}(\mathbb{R}^m,\mathcal{M}_1)\longrightarrow C^{\infty}(\mathbb{R}^m,\mathcal{M}_1);\\
&&f(x,u)\mapsto \mathcal{J}_{2n+1}[f](x,u):=\frac{x}{||x||^{m-2n}}f(\frac{x}{||x||^2},\frac{xux}{||x||^2}).
\end{eqnarray*}
\end{definition}
Note that this inversion also consists of Kelvin inversion $\mathcal{J}$ on $\mathbb{R}^m$ in the variable $x$ composed with a reflection $u\mapsto \omega u\omega$ acting on the dummy variable $u$ (where $x=||x||\omega$) and a multiplication of a conformal weight term $\displaystyle\frac{x}{||x||^{m-2n}}$; it satisfies $\mathcal{J}_{2n+1}^2=-1$ instead. Similarly, monogenic inversion is an involution mapping solutions for $\Do$ to solutions for $\Do$ (\cite{Ro}). Then we have the following lemma.
\begin{lemma}
The special conformal transformation is defined as
\begin{eqnarray*}
\mathcal{C}_{2n-1}:=\mathcal{J}_{2n-1}\partial_{x_j}\mathcal{J}_{2n-1}=-e_jx-2\langle u,x\rangle\partial_{u_j}+2u_j\langle x,D_u\rangle -||x||^2\partial_{x_j}+x_j(2\mathbb{E}_x+m-2n),
\end{eqnarray*}
$\mathcal{C}_{2n-1}=\mathcal{C}_{2n-3}-2x_j=-\mathcal{C}_{2n-2}-e_jx-2x_j$.
\end{lemma}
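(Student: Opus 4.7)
The proof is a direct chain-rule computation of the conjugation $\mathcal{J}_{2n-1}\partial_{x_j}\mathcal{J}_{2n-1}$, entirely analogous to the bosonic case in Lemma \ref{SCT} (and its model Proposition A.1 of \cite{B1}), with an extra complication coming from the fact that the monogenic inversion carries a Clifford \emph{vector} weight $x||x||^{-(m-2n+2)}$ rather than a scalar one.

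I would begin by rewriting the reflected dummy variable via the Clifford identity $xux=||x||^{2}u-2\langle x,u\rangle x$, so that
\begin{eqnarray*}
\mathcal{J}_{2n-1}[f](x,u)=\frac{x}{||x||^{m-2n+2}}\,f\!\left(\frac{x}{||x||^{2}},\ u-\frac{2\langle x,u\rangle x}{||x||^{2}}\right)
\end{eqnarray*}
becomes a polynomial expression in $x$ and $u$ to which $\partial_{x_j}$ can be applied term by term. Differentiation in $x_j$ produces three groups of contributions: (a) the derivative of the vector weight $x||x||^{-(m-2n+2)}$, giving $e_j||x||^{-(m-2n+2)}-(m-2n+2)x_j x\,||x||^{-(m-2n+4)}$; (b) Kelvin-type derivatives of $x/||x||^{2}$ in the first slot of $f$, giving $||x||^{-2}\partial_{y_j}f$ and a term proportional to $x_j\langle x,D_y\rangle f$; and (c) derivatives of $u-2\langle x,u\rangle x/||x||^{2}$ in the second slot, producing terms involving $\partial_{u_j}f$, $\langle x,D_u\rangle f$, and $u_j$.

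The second step is to compose with $\mathcal{J}_{2n-1}$ on the left. Using the involution $\mathcal{J}_{2n-1}^{2}=-1$ (verified directly from $x^{2}=-||x||^{2}$) and the anticommutation $e_jx=-xe_j-2x_j$, each of the above contributions collapses to an operator acting on $f(x,u)$ directly. The five summands on the right-hand side of the lemma then arise as follows: the term $-e_jx$ comes from (a), after moving the left-over $e_j$ past the outer $x$ by the Clifford anticommutation; the pair $-2\langle u,x\rangle\partial_{u_j}$ and $2u_j\langle x,D_u\rangle$ comes from (c); and the pair $-||x||^{2}\partial_{x_j}$ and $x_j(2\mathbb{E}_x+m-2n)$ comes from combining (b) with the scalar part of (a). The precise coefficient $m-2n$ in the last summand equals the naive Euler shift $-(m-2n+2)$ from the conformal weight, corrected by $+2$ from the scalar $-2x_j$ that the anticommutation $e_jx=-xe_j-2x_j$ spawns alongside the $-e_jx$ term.

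Both stated reformulations then follow by inspection. Substituting $n\mapsto n-1$ in the first displayed formula gives
\begin{eqnarray*}
\mathcal{C}_{2n-3}=-e_jx-2\langle u,x\rangle\partial_{u_j}+2u_j\langle x,D_u\rangle-||x||^{2}\partial_{x_j}+x_j(2\mathbb{E}_x+m-2n+2),
\end{eqnarray*}
which differs from $\mathcal{C}_{2n-1}$ only in the coefficient of $x_j$, with difference $2x_j$, yielding $\mathcal{C}_{2n-1}=\mathcal{C}_{2n-3}-2x_j$. Likewise, substituting $n\mapsto n-1$ in the bosonic formula of Lemma \ref{SCT} gives $\mathcal{C}_{2n-2}=2\langle u,x\rangle\partial_{u_j}-2u_j\langle x,D_u\rangle+||x||^{2}\partial_{x_j}-x_j(2\mathbb{E}_x+m-2n+2)$, so that $-\mathcal{C}_{2n-2}-e_jx-2x_j$ coincides with $\mathcal{C}_{2n-1}$ on the nose. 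The main obstacle is the non-commutativity bookkeeping in group (a): to recover $-e_jx$ (rather than $+xe_j$ or a scalar-only artifact) on the right-hand side, one must apply the Clifford relation $e_jx+xe_j=-2x_j$ at exactly the right moment and absorb the resulting scalar $-2x_j$ into the $x_j(2\mathbb{E}_x+m-2n)$ term, simultaneously shifting its coefficient from $-(m-2n+2)$ to $m-2n$. Once the left-versus-right ordering of Clifford vectors is tracked carefully, the remaining manipulations are entirely mechanical and parallel to those in \cite{B1}.
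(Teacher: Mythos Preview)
Your proposal is correct and follows essentially the same approach as the paper: the paper's proof is just the one-line remark that ``a similar calculation as in Proposition A.1 in \cite{B1} will show the conclusion,'' and you have simply spelled out that calculation, correctly isolating the extra Clifford term $-e_jx$ coming from the vector weight and tracking the anticommutation $e_jx+xe_j=-2x_j$ to obtain the shift in the $x_j(2\mathbb{E}_x+m-2n)$ coefficient. Your verification of the two recursion identities by direct substitution is also exactly what is needed.
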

\begin{proof}
As similar calculation as in \emph{Proposition A.1} in \cite{B1} will show the conclusion.
\end{proof}
Then we arrive at the main proposition, stating that the special conformal transformations are generalized symmetries of operator $\mathcal{D}_{1,2n-1}$.

\begin{proposition}\label{propodd}
The special conformal transformations $\mathcal{C}_{2n-1}$, with $j\in\{1,2,\dots,m\}$ are generalized symmetries of $\mathcal{D}_{1,2n-1}$. More specifically,
\begin{eqnarray*}
[\mathcal{D}_{1,2n-1},\mathcal{C}_{2n-1}]=(4n-2)x_j\mathcal{D}_{1,2n-1}.
\end{eqnarray*}
In particular, this shows that $\mathcal{J}_{2n-1}\mathcal{D}_{1,2n-1}\mathcal{J}_{2n-1}=||x||^{4n-2}\mathcal{D}_{1,2n-1}$, which is the generalization of the case of the classical higher order Dirac operator $D_x^{2n-1}$ \cite{Ax}. This also implies $\mathcal{D}_{1,2n-1}$ is invariant under inversion.
\end{proposition}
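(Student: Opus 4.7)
The plan is to mirror the proof of Proposition~\ref{propeven}: verify the commutator identity $[\mathcal{D}_{1,2n-1}, \mathcal{C}_{2n-1}] = (4n-2) x_j \mathcal{D}_{1,2n-1}$ by direct calculation, and then deduce the inversion identity $\mathcal{J}_{2n-1}\mathcal{D}_{1,2n-1}\mathcal{J}_{2n-1} = ||x||^{4n-2} \mathcal{D}_{1,2n-1}$ from conformal invariance (by an odd-order analog of Theorem~\ref{theoremeven}) together with the uniqueness of intertwining operators guaranteed by Theorems~\ref{VS1}--\ref{VS2}. The sign $\mathcal{J}_{2n-1}^2 = -1$ is irrelevant because $\mathcal{J}_{2n-1}$ appears twice in the conjugation.

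The key algebraic reduction is the relation $\mathcal{C}_{2n-1} = -\mathcal{C}_{2n-2} - e_j x - 2x_j$ supplied by the preceding lemma: for any operator $T$, one may write $[T, \mathcal{C}_{2n-1}] = -[T, \mathcal{C}_{2n-2}] - [T, e_j x] - 2[T, x_j]$. The first piece is handled by the bosonic Lemmas~\ref{lemma1} and~\ref{lemma2} applied at orders $n-1$ and $n-2$, respectively; those formulas remain valid on $\mathcal{M}_1$-valued functions since $\mathcal{M}_1 \subset \mathcal{H}_1$ and the lemmas only invoke degree-one homogeneity in $u$. The remaining two pieces involve only Clifford multiplication and the elementary commutators $[\Delta_x^{n-1}, x_j] = 2(n-1)\partial_{x_j}\Delta_x^{n-2}$ and $[\Delta_x^{n-1}, x] = 2(n-1) D_x \Delta_x^{n-2}$.

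Concretely, using the Leibniz rule $[AB,C] = A[B,C] + [A,C]B$, I would compute the three commutators $[D_x \Delta_x^{n-1}, \mathcal{C}_{2n-1}]$, $[u\langle D_u,D_x\rangle \Delta_x^{n-1}, \mathcal{C}_{2n-1}]$, and $[\langle u,D_x\rangle\langle D_u,D_x\rangle\Delta_x^{n-2} D_x, \mathcal{C}_{2n-1}]$, invoking the reduction above at each inner step. The manipulations repeatedly use the Clifford anticommutation $e_j x + x e_j = -2 x_j$ together with the first-order derivation identity $\partial_{u_i}(u_k\,\cdot) = \delta_{ik} + u_k\partial_{u_i}$ (and its consequence $\langle D_u, D_x\rangle u = D_x + u\langle D_u, D_x\rangle$). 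Taking the weighted sum with the coefficients $1$, $-2/(m+2n-2)$, and $-(4n-4)/(m+2n-2)$ coming from $\mathcal{D}_{1,2n-1}$, the contributions not of the form $x_j \cdot (\text{piece of } \mathcal{D}_{1,2n-1})$ should cancel, leaving exactly $(4n-2)x_j \mathcal{D}_{1,2n-1}$.

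The main obstacle will be the Clifford-algebraic bookkeeping introduced by the $-e_j x$ piece of $\mathcal{C}_{2n-1}$ interacting with the leftmost $D_x$ in the first and third summands of $\mathcal{D}_{1,2n-1}$: the anticommutations generate terms proportional to $e_j D_x \Delta_x^{n-2}$, $\langle u, D_x\rangle\partial_{u_j}\Delta_x^{n-1}$, and $u_j\langle D_u,D_x\rangle\Delta_x^{n-1}$ that must cancel precisely across the three weighted commutators, with the coefficient $2/(m+2n-2)$ playing an essential balancing role just as $4n/(m+2n-2)$ does in the bosonic case. As a useful arithmetic sanity check, the target coefficient $4n-2$ equals twice the order of $\mathcal{D}_{1,2n-1}$, i.e.\ the shift in conformal weight expected for an order $(2n-1)$ conformally invariant operator in line with the computation at the end of Section~3.2.
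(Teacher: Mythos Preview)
Your proposal is correct and follows essentially the same route as the paper: the paper isolates exactly the three commutators you name as Lemmas~\ref{lemma 5}--\ref{lemma 7} (stated without proof), combines them with the coefficients $1$, $-2/(m+2n-2)$, $-(4n-4)/(m+2n-2)$ to obtain $(4n-2)x_j\mathcal{D}_{1,2n-1}$, and then invokes conformal invariance together with Slov\'ak's uniqueness to deduce the inversion identity. Your reduction via $\mathcal{C}_{2n-1}=-\mathcal{C}_{2n-2}-e_jx-2x_j$ and reuse of the bosonic Lemmas~\ref{lemma1}--\ref{lemma2} is precisely the natural way to establish those three lemmas, so you are supplying the details the paper omits.
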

 \begin{theorem}\label{theoremodd}
 The first order generalized symmetries of $\mathcal{D}_{1,2n-1}$ are given by:
 \begin{enumerate}
 \item The infinitesimal rotation $L_{i,j}^x+L_{i,j}^u$, with $1\leq i<j\leq m$.
 \item The shifted Euler operator $(\mathbb{E}_x+\displaystyle\frac{m-2n+1}{2})$.
 \item The infinitesimal translations $\partial_{x_j}$, with $1\leq j\leq m$.
 \item The special conformal transformations $\mathcal{J}_{2n-1}\partial_{x_j}\mathcal{J}_{2n-1}$, with $1\leq j\leq m$.
 \end{enumerate}
 These operators span a Lie algebra which is isomorphic to the conformal Lie algebra $\mathfrak{so}(1,m+1)$, whereby the Lie bracket is the ordinary commutator.
 \end{theorem}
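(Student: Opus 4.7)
The plan is to mirror the strategy used for Theorem \ref{theoremeven}, adapting the transvector algebra approach of \cite{ER} to the fermionic (monogenic) setting. The first task is to verify that each of the four families listed is indeed a first order generalized symmetry of $\mathcal{D}_{1,2n-1}$. The rotations $L_{i,j}^x+L_{i,j}^u$ are proper symmetries because $\mathcal{D}_{1,2n-1}$ is built from the $SO(m)$-invariant ingredients $D_x$, $\Delta_x$, $u$, $D_u$, and $\langle u,D_x\rangle$, $\langle D_u, D_x\rangle$, all of which commute with the total angular momentum. The translations $\partial_{x_j}$ are proper symmetries since $\mathcal{D}_{1,2n-1}$ has constant coefficients in $x$. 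The shifted Euler operator is a generalized symmetry because every summand of $\mathcal{D}_{1,2n-1}$ is homogeneous of degree $2n-1$ in $\partial_x$; a direct computation yields $[\mathcal{D}_{1,2n-1},\mathbb{E}_x]=(2n-1)\mathcal{D}_{1,2n-1}$, so
\[
\mathcal{D}_{1,2n-1}\Bigl(\mathbb{E}_x+\tfrac{m-2n+1}{2}\Bigr)=\Bigl(\mathbb{E}_x+\tfrac{m-2n+1}{2}+(2n-1)\Bigr)\mathcal{D}_{1,2n-1},
\]
which exhibits the conformal weight shift. Finally, the special conformal transformations are generalized symmetries by Proposition \ref{propodd}, which has already been established in the preceding lemmas.

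Having identified the $\tfrac{1}{2}(m+1)(m+2)$ candidate generators (namely $\binom{m}{2}$ rotations, $m$ translations, $m$ special conformal transformations, and one shifted Euler operator), the next step is to check they close under the commutator and reproduce the standard $\mathfrak{so}(1,m+1)$ relations. In the conformal algebra these are
\begin{align*}
[M_{ij},M_{kl}] &= \delta_{ik}M_{jl}-\delta_{jk}M_{il}-\delta_{il}M_{jk}+\delta_{jl}M_{ik},\\
[M_{ij},P_k] &= \delta_{ik}P_j-\delta_{jk}P_i, \qquad [M_{ij},K_k] = \delta_{ik}K_j-\delta_{jk}K_i,\\
[D,P_j] &= P_j, \qquad [D,K_j] = -K_j, \qquad [P_j,K_k] = 2\delta_{jk}D-2M_{jk},
\end{align*}
with all remaining brackets vanishing. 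I would identify $M_{ij}$ with $L_{i,j}^x+L_{i,j}^u$, $P_j$ with $\partial_{x_j}$, $D$ with the shifted Euler operator $\mathbb{E}_x+\tfrac{m-2n+1}{2}$, and $K_j$ with $\mathcal{C}_{2n-1}=\mathcal{J}_{2n-1}\partial_{x_j}\mathcal{J}_{2n-1}$, and then verify these relations by direct calculation using the explicit expression for $\mathcal{C}_{2n-1}$ from Lemma \ref{SCT}'s fermionic counterpart. The rotation sector and the brackets of $M_{ij}$ with $P_k$ and $D$ are immediate; the brackets $[D,P_j]=P_j$ and $[D,K_j]=-K_j$ reduce to homogeneity counts for $\partial_{x_j}$ and $\mathcal{C}_{2n-1}$ respectively.

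The main obstacle is the final bracket $[P_j,K_k]=2\delta_{jk}D-2M_{jk}$, because the monogenic special conformal transformation $\mathcal{C}_{2n-1}$ contains the extra Clifford term $-e_jx$ not present in the bosonic case. The systematic way around this is to recognize, following \cite{ER}, that the spin part $u\mapsto \omega u\omega$ combined with the prefactor $x/\|x\|^{m-2n}$ in $\mathcal{J}_{2n-1}$ furnishes precisely the representation of the conformal weight and spin type appropriate for $\mathcal{M}_1$-valued fields, so the apparent Clifford correction is absorbed into the $M_{ij}$ and $D$ contributions on the right-hand side. Concretely, I would expand $[\partial_{x_j},\mathcal{C}_{2n-1}]$ term by term, collect the pieces proportional to $\delta_{jk}$ (which recombine into $2(\mathbb{E}_x+\tfrac{m-2n+1}{2})$) and the antisymmetric pieces in $j,k$ (which recombine into $-2(L_{j,k}^x+L_{j,k}^u)$, the Clifford contribution $e_j x_k-e_k x_j$ being exactly what promotes the orbital $L^x$ to the total angular momentum). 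Once this single commutator is verified, the isomorphism with $\mathfrak{so}(1,m+1)$ follows by matching dimensions and structure constants, and the proof is complete.
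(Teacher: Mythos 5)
Your proposal follows essentially the same path as the paper, which for the analogous Theorem~\ref{theoremeven} simply states that ``the proof is similar as in~\cite{ER} via transvector algebras'' and identifies the Euler-operator shift with the conformal weight. You correctly note that rotations and translations are proper symmetries, that the shifted Euler operator encodes the conformal weight $\omega=\tfrac{m-2n+1}{2}$, and that the special conformal transformations are generalized symmetries by Proposition~\ref{propodd}. Your plan to verify the $\mathfrak{so}(1,m+1)$ structure constants directly from the explicit form of $\mathcal{C}_{2n-1}$ is the concrete version of what~\cite{ER} packages into transvector-algebra language, and the identification $M_{ij}\leftrightarrow L^x_{i,j}+L^u_{i,j}$, $P_j\leftrightarrow\partial_{x_j}$, $D\leftrightarrow\mathbb{E}_x+\tfrac{m-2n+1}{2}$, $K_j\leftrightarrow\mathcal{C}_{2n-1}$ is correct.

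One small imprecision: in describing the bracket $[P_j,K_k]$ you name the Clifford correction as ``$e_j x_k - e_k x_j$.'' What the $-e_k x$ term of $\mathcal{C}_{2n-1}$ actually contributes to $[\partial_{x_j},\mathcal{C}^{(k)}_{2n-1}]$ is the bivector $[\partial_{x_j},-e_kx]=-e_ke_j$, which is precisely the spin part of the infinitesimal rotation on $\mathcal{M}_1(\Rm,\mathcal{S})$-valued fields (since the $Spin(m)$ action is $f\mapsto sf(sx\tilde s,su\tilde s)$, the generator is $L^x_{jk}+L^u_{jk}$ together with this $\tfrac{1}{2}e_je_k$-type term); when $j=k$ this same source contributes the constant $-e_je_j=1$ that is needed for the $\delta_{jk}$ piece to match $2D=2\mathbb{E}_x+m-2n+1$. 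Replacing $e_jx_k-e_kx_j$ with $e_je_k$ (equivalently $-e_ke_j$) makes your sketch accurate, and the rest of the argument stands.
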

 \subsubsection*{Detailed proof of Proposition \ref{propodd}:}
To prove Proposition \ref{propodd}, as in the even case, we need a few technical lemmas.

\begin{lemma}\label{lemma 5}
For all $1 \leq j \leq m$, we have
\begin{eqnarray*}
&&[D_x\Delta_x^{n-1},\mathcal{C}_{2n-1}] \\
&=&(4n-2)x_jD_x\Delta_x^{n-1}+(4n-4)\big(u_j\langle D_u,D_x\rangle-\langle u, D_x\rangle \partial_{u_j}\big)D_x\Delta_x^{n-2}-2u\partial_{u_j}\Delta_x^{n-1}.
\end{eqnarray*}
\end{lemma}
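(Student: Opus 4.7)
The strategy parallels that of Lemma \ref{lemma1}: apply the Leibniz commutator identity $[AB,C]=A[B,C]+[A,C]B$ and then exploit the algebraic relation $\mathcal{C}_{2n-1}=-\mathcal{C}_{2n-2}-e_jx-2x_j$ from the preceding lemma to reduce the odd-case commutator to quantities controlled by the even-case machinery already in hand. I begin by splitting
$$[D_x\Delta_x^{n-1},\mathcal{C}_{2n-1}]=D_x[\Delta_x^{n-1},\mathcal{C}_{2n-1}]+[D_x,\mathcal{C}_{2n-1}]\Delta_x^{n-1}.$$

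For the first summand, substitute $\mathcal{C}_{2n-1}=-\mathcal{C}_{2n-2}-e_jx-2x_j$. The piece $[\Delta_x^{n-1},\mathcal{C}_{2n-2}]$ is supplied directly by Lemma \ref{lemma1} with $n$ replaced by $n-1$. The remaining commutators are elementary: since $[\Delta_x,x_j]=2\partial_{x_j}$ and $[\Delta_x,e_jx]=2e_jD_x$, and since $\partial_{x_j}$, $D_x$, and $e_j$ all commute with $\Delta_x$, iteration yields $[\Delta_x^{n-1},x_j]=2(n-1)\partial_{x_j}\Delta_x^{n-2}$ and $[\Delta_x^{n-1},e_jx]=2(n-1)e_jD_x\Delta_x^{n-2}$. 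When $D_x$ is subsequently pulled through these expressions, I use $D_xe_jD_x=e_j\Delta_x-2\partial_{x_j}D_x$, which is immediate from $\{D_x,e_j\}=-2\partial_{x_j}$ together with $D_x^2=-\Delta_x$.

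For the second summand I compute $[D_x,\mathcal{C}_{2n-1}]$ term-by-term, using the elementary identities $[D_x,\langle u,x\rangle]=u$, $[D_x,\langle x,D_u\rangle]=D_u$, $[D_x,||x||^2]=2x$, $[D_x,\mathbb{E}_x]=D_x$, and $[D_x,x_j]=e_j$, together with the slightly more delicate $[D_x,e_jx]=(m-2)e_j+2e_j\mathbb{E}_x-2x\partial_{x_j}$, which follows from the Clifford identity $\sum_i e_ie_je_i=(m-2)e_j$ and the relation $e_ix=-xe_i-2x_i$. The contributions involving $||x||^2\partial_{x_j}$ and $e_j\mathbb{E}_x$ cancel, leaving the compact form $[D_x,\mathcal{C}_{2n-1}]=2x_jD_x+(2-2n)e_j-2u\partial_{u_j}+2u_jD_u$.

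Finally, I assemble both summands and collect like terms. The leading $x_jD_x\Delta_x^{n-1}$ pieces add to give coefficient $4(n-1)+2=4n-2$; all $e_j\Delta_x^{n-1}$ contributions (coming from $[D_x,x_j]=e_j$, from $D_xe_jD_x$, and from $(2-2n)e_j$ in $[D_x,\mathcal{C}_{2n-1}]$) cancel exactly; the $\pm 4(n-1)\partial_{x_j}D_x\Delta_x^{n-2}$ pieces cancel pairwise; the middle terms assemble into $(4n-4)\bigl(u_j\langle D_u,D_x\rangle-\langle u,D_x\rangle\partial_{u_j}\bigr)D_x\Delta_x^{n-2}$; and the residual $2u_jD_u\Delta_x^{n-1}$ annihilates $\mathcal{M}_1$-valued functions since $D_uf=0$ on the target space. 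The main obstacle is bookkeeping rather than conceptual, particularly the Clifford anticommutation relations that appear in $[D_x,e_jx]$ and in moving $D_x$ past $e_j$; once those are carefully tracked, the computation is mechanical.
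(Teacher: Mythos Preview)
Your argument is correct. The paper does not supply a proof of Lemma~\ref{lemma 5} (nor of Lemmas~\ref{lemma 6} and~\ref{lemma 7}); it only states them and then combines them to obtain Proposition~\ref{propodd}. Your approach---splitting via $[AB,C]=A[B,C]+[A,C]B$, invoking the relation $\mathcal{C}_{2n-1}=-\mathcal{C}_{2n-2}-e_jx-2x_j$, and reducing to Lemma~\ref{lemma1} with $n\mapsto n-1$ plus a direct computation of $[D_x,\mathcal{C}_{2n-1}]$---is exactly the natural extension of the paper's even-case machinery, and every intermediate identity you record checks out. One minor point of phrasing: when you say ``the contributions involving $||x||^2\partial_{x_j}$ and $e_j\mathbb{E}_x$ cancel,'' you presumably mean the $x\partial_{x_j}$ terms (arising from $[D_x,-e_jx]$ and $[D_x,-||x||^2\partial_{x_j}]$) and the $e_j\mathbb{E}_x$ terms (arising from $[D_x,-e_jx]$ and $[D_x,x_j\cdot 2\mathbb{E}_x]$); the commutator $[D_x,||x||^2]=2x$ does not leave any $||x||^2$ behind. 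Also note that the final $2u_jD_u\Delta_x^{n-1}$ term, which you correctly discard because $D_u$ annihilates $\mathcal{M}_1$-valued functions, means the stated identity holds as operators on $C^\infty(\mathbb{R}^m,\mathcal{M}_1)$ rather than as an abstract operator identity---consistent with how the paper is working throughout.
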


\begin{lemma}\label{lemma 6}
For all $1 \leq j \leq m$, we have
\begin{eqnarray*}
&&[u\langle D_u,D_x\rangle \Delta_x^{n-1}, \mathcal{C}_{2n-1}] \\
&=&(4n-2)x_ju\langle D_u,D_x\rangle \Delta_x^{n-1}-(m+2n-2)u\partial_{u_j}\Delta_x^{n-1}-(2n-2)ue_j\langle D_u,D_x \rangle \Delta_x^{n-2}.
\end{eqnarray*}
\end{lemma}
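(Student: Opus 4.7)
The plan is to imitate the template of Lemma 5, exploiting the recursive identity $\mathcal{C}_{2n-1} = -\mathcal{C}_{2n-2} - e_j x - 2x_j$ to reduce everything to commutators that either appear in the bosonic case (Lemma 1) or are elementary. The decisive preliminary observation is that the Clifford vector $u = \sum_i u_i e_i$ commutes with $\mathcal{C}_{2n-1}$. Only the three terms $-e_j x$, $-2\langle u, x\rangle\partial_{u_j}$, and $2u_j\langle x, D_u\rangle$ contribute to $[u, \mathcal{C}_{2n-1}]$; combining the Clifford identities $ux = -xu - 2\langle u, x\rangle$ and $[u, \partial_{u_k}] = -e_k$, one finds these three contributions ($-2\langle u, x\rangle e_j + 2u_j x$, $2\langle u, x\rangle e_j$, and $-2u_j x$) cancel pairwise. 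Consequently $[u\langle D_u, D_x\rangle\Delta_x^{n-1}, \mathcal{C}_{2n-1}] = u\cdot[\langle D_u, D_x\rangle\Delta_x^{n-1}, \mathcal{C}_{2n-1}]$, and we may carry the $u$ along.

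The remaining commutator is split via Leibniz:
\begin{eqnarray*}
[\langle D_u, D_x\rangle\Delta_x^{n-1}, \mathcal{C}_{2n-1}] = \langle D_u, D_x\rangle[\Delta_x^{n-1}, \mathcal{C}_{2n-1}] + [\langle D_u, D_x\rangle, \mathcal{C}_{2n-1}]\Delta_x^{n-1}.
\end{eqnarray*}
For $[\Delta_x^{n-1}, \mathcal{C}_{2n-1}]$ I substitute Lemma 1 with $n$ replaced by $n-1$ together with the elementary identities $[\Delta_x^{n-1}, x_j] = 2(n-1)\partial_{x_j}\Delta_x^{n-2}$ and $[\Delta_x^{n-1}, e_j x] = 2(n-1) e_j D_x \Delta_x^{n-2}$. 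For $[\langle D_u, D_x\rangle, \mathcal{C}_{2n-1}]$ I compute term by term, using $[\langle D_u, D_x\rangle, \|x\|^2] = 2\langle x, D_u\rangle$, $[\langle D_u, D_x\rangle, \langle u, x\rangle] = m + \mathbb{E}_u + \mathbb{E}_x$, $[\langle D_u, D_x\rangle, x_j] = \partial_{u_j}$, and $[\langle D_u, D_x\rangle, e_j x] = e_j D_u$. Since we act on $f(x, u)$ homogeneous of degree $1$ in $u$, second $u$-derivatives, $\langle D_u, D_x\rangle^2$, and $\partial_{u_j}\langle D_u, D_x\rangle$ all vanish, which collapses $[\langle D_u, D_x\rangle, \mathcal{C}_{2n-1}]$ to $-e_j D_u - (m+2n-2)\partial_{u_j} + 2x_j\langle D_u, D_x\rangle$.

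Pushing $\langle D_u, D_x\rangle$ through the five pieces of $[\Delta_x^{n-1}, \mathcal{C}_{2n-1}]$ using the rules $\langle D_u, D_x\rangle\langle u, D_x\rangle = \langle u, D_x\rangle\langle D_u, D_x\rangle + \Delta_x$, $\langle D_u, D_x\rangle u_j = u_j\langle D_u, D_x\rangle + \partial_{x_j}$, and $\langle D_u, D_x\rangle x_j = x_j\langle D_u, D_x\rangle + \partial_{u_j}$ triggers a cascade of cancellations: the spurious $\partial_{x_j}\langle D_u, D_x\rangle\Delta_x^{n-2}$ terms coming from $\langle D_u, D_x\rangle u_j\langle D_u, D_x\rangle\Delta_x^{n-2}$ and from $\langle D_u, D_x\rangle\partial_{x_j}\Delta_x^{n-2}$ annihilate each other, and two $\partial_{u_j}\Delta_x^{n-1}$ contributions likewise cancel. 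On $\mathcal{M}_1$-valued $f$ the residual $D_u$ piece vanishes because $D_u f = 0$ and $\Delta_x$ preserves $u$-monogenicity. Adding in $[\langle D_u, D_x\rangle, \mathcal{C}_{2n-1}]\Delta_x^{n-1}$, multiplying by $u$ on the left, and collecting coefficients yields the three terms in the lemma.

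The main obstacle is the Clifford bookkeeping of the residual $e_j$-factor stemming from the $-e_j x$ piece of $\mathcal{C}_{2n-1}$. Since $u$ and $e_j$ do not commute ($u e_j = -e_j u - 2u_j$) and likewise $D_x$ and $e_j$ only anticommute up to a first-order correction ($\{D_x, e_j\} = -2\partial_{x_j}$), each reordering produces additional summands that must be accounted for. Verifying that the coefficient of the $u e_j\langle D_u, D_x\rangle\Delta_x^{n-2}$ term emerges as $-(2n-2)$ after all cancellations requires invoking the monogenicity constraint $D_u f = 0$ at exactly the right places; a careless order of operations would either retain a spurious $\partial_{x_j}$ contribution or leave an unwanted $D_u$ residue that the degree-$1$ hypothesis alone does not kill.
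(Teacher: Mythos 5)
Your opening observation that $[u,\mathcal{C}_{2n-1}]=0$ is correct and a genuine simplification: the three contributions $[u,-e_jx]=-2\langle u,x\rangle e_j+2u_jx$, $[u,-2\langle u,x\rangle\partial_{u_j}]=2\langle u,x\rangle e_j$, and $[u,2u_j\langle x,D_u\rangle]=-2u_jx$ do cancel, so the Clifford vector $u$ factors cleanly out of the commutator. The Leibniz split, the recursion through $\mathcal{C}_{2n-1}=-\mathcal{C}_{2n-2}-e_jx-2x_j$ and Lemma \ref{lemma1}, the formulas $[\Delta_x^{n-1},x_j]=2(n-1)\partial_{x_j}\Delta_x^{n-2}$ and $[\Delta_x^{n-1},e_jx]=2(n-1)e_jD_x\Delta_x^{n-2}$, and the reduction of $[\langle D_u,D_x\rangle,\mathcal{C}_{2n-1}]$ to $-e_jD_u-(m+2n-2)\partial_{u_j}+2x_j\langle D_u,D_x\rangle$ on degree-one functions are all correct as well.

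The final bookkeeping, however, does not produce the stated right-hand side, and the phrase about ``invoking the monogenicity constraint $D_uf=0$ at exactly the right places'' is hiding a gap rather than closing one. Carrying your own plan through, the only surviving contribution from the $-e_jx$ piece is $-\langle D_u,D_x\rangle[\Delta_x^{n-1},e_jx]=-2(n-1)e_j\langle D_u,D_x\rangle\Delta_x^{n-2}D_x$ (note $\langle D_u,D_x\rangle$ commutes with both $e_j$ and $D_x$), which upon left-multiplication by $u$ gives $-(2n-2)\,ue_j\langle D_u,D_x\rangle\Delta_x^{n-2}D_x$. Monogenicity does exactly one thing in this computation: it kills the $ue_jD_u\Delta_x^{n-1}$ term arising from $u\cdot(-e_jD_u)\Delta_x^{n-1}$, since $D_u\Delta_x^{n-1}f=\Delta_x^{n-1}D_uf=0$. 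It cannot delete the trailing $D_x$. You can corroborate this by checking that Proposition \ref{propodd} actually follows from Lemmas \ref{lemma 5}--\ref{lemma 7}: with the third term of this lemma equal to $-(2n-2)\,ue_j\langle D_u,D_x\rangle\Delta_x^{n-2}D_x$, the $ue_j\langle D_u,D_x\rangle$ contributions from Lemmas \ref{lemma 6} and \ref{lemma 7} in the combination defining $\mathcal{D}_{1,2n-1}$ become $\tfrac{4n-4}{m+2n-2}ue_j\langle D_u,D_x\rangle\Delta_x^{n-2}D_x$ with opposite signs and cancel; with $\Delta_x^{n-2}$ alone they leave an uncancelled residue proportional to $ue_j\langle D_u,D_x\rangle\Delta_x^{n-2}(1-D_x)$ and the proposition fails. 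The printed lemma evidently omits a trailing $D_x$, and a careful execution of your own method would have surfaced that discrepancy rather than papered over it with an appeal to monogenicity.
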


\begin{lemma}\label{lemma 7}
For all $1 \leq j \leq m$, we have
\begin{eqnarray*}
&&[\langle u, D_x\rangle \langle D_u,D_x\rangle \Delta_x^{n-2}D_x,\mathcal{C}_{2n-1}]
=(4n-2)x_j\langle u, D_x\rangle \langle D_u,D_x\rangle \Delta_x^{n-2}D_x\\
&&-(m+2n-2)\big( \langle u,D_x\rangle \partial_{u_j}-u_j\langle D_u,D_x\rangle \big)\Delta_x^{n-2}D_x+ue_j\langle D_u,D_x\rangle \Delta_x^{n-2}D_x.
\end{eqnarray*}
\end{lemma}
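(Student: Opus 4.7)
The strategy is to expand the commutator by the Leibniz rule $[AB,C]=A[B,C]+[A,C]B$ and reduce the result either to the bosonic-case computation of Lemma \ref{lemma2} or to elementary brackets of $D_x$ with the individual summands of $\mathcal{C}_{2n-1}$. Writing $Q:=\langle u,D_x\rangle\langle D_u,D_x\rangle\Delta_x^{n-2}$ for the scalar differential operator that multiplies $D_x$, I split
\[[QD_x,\mathcal{C}_{2n-1}]=Q[D_x,\mathcal{C}_{2n-1}]+[Q,\mathcal{C}_{2n-1}]D_x.\]
This isolates all Clifford-algebra noncommutativity into the single factor $D_x$ and into the summand $-e_jx$ of $\mathcal{C}_{2n-1}$, since every other factor of $Q$ and every other summand of $\mathcal{C}_{2n-1}$ is scalar.

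For the piece $[Q,\mathcal{C}_{2n-1}]D_x$, I would use the identity $\mathcal{C}_{2n-1}=-\mathcal{C}_{2n-2}-e_jx-2x_j$ from the preceding lemma to write
\[[Q,\mathcal{C}_{2n-1}]=-[Q,\mathcal{C}_{2n-2}]-[Q,e_jx]-2[Q,x_j].\]
The first bracket is exactly Lemma \ref{lemma2} with $n$ replaced by $n-1$, contributing $(4n-4)x_jQ-(m+2n-4)(\langle u,D_x\rangle\partial_{u_j}-u_j\langle D_u,D_x\rangle)\Delta_x^{n-2}$ (after negation). Since $Q$ is scalar it commutes with the Clifford element $e_j$, so $[Q,e_jx]=e_j[Q,x]=e_j\sum_i e_i[Q,x_i]$, and both $[Q,x_i]$ and $[Q,x_j]$ reduce to iterating $[\partial_{x_i},x_j]=\delta_{ij}$ through the three factors $\langle u,D_x\rangle$, $\langle D_u,D_x\rangle$, $\Delta_x^{n-2}$. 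For the piece $Q[D_x,\mathcal{C}_{2n-1}]$, I would compute $[D_x,\mathcal{C}_{2n-1}]$ by direct expansion of the six summands of $\mathcal{C}_{2n-1}$ using $[\partial_{x_i},x_j]=\delta_{ij}$ and $e_ie_j+e_je_i=-2\delta_{ij}$, which produces terms of the form $x_jD_x$, $u\partial_{u_j}$, and a constant multiple of $e_j$.

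Collecting everything and using $Qx_j=x_jQ+[Q,x_j]$ to push $x_j$ leftward past $Q$, the $x_jQD_x$ contributions should add to $(4n-2)$, the $(\langle u,D_x\rangle\partial_{u_j}-u_j\langle D_u,D_x\rangle)\Delta_x^{n-2}D_x$ contributions should combine with coefficient $-(m+2n-2)$, and the residual Clifford-valued pieces should collapse to the single term $ue_j\langle D_u,D_x\rangle\Delta_x^{n-2}D_x$. The main obstacle is this final collection step: one must carefully track the cancellations between the $e_j$-valued terms arising from $[D_x,\mathcal{C}_{2n-1}]$ (via the anticommutator $e_jx+xe_j=-2x_j$) and those arising from $[Q,e_jx]D_x$, together with the lower-order $u_j$ and $\partial_{u_j}$ corrections produced when $x_j$ is commuted past $\langle u,D_x\rangle$ and $\langle D_u,D_x\rangle$. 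Only after these cancellations does the single clean $ue_j$ term remain, which is the Clifford-odd hallmark of the fermionic case and is precisely what is needed so that Lemmas \ref{lemma 5}, \ref{lemma 6}, and the present one combine to yield $[\mathcal{D}_{1,2n-1},\mathcal{C}_{2n-1}]=(4n-2)x_j\mathcal{D}_{1,2n-1}$.
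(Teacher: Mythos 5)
The paper states Lemma~\ref{lemma 7} without proof, so your proposal must be judged on its own merits, against the background of the even-case proofs (Lemmas~\ref{lemma1} and~\ref{lemma2}) which use exactly the kind of Leibniz-split-plus-recursion strategy you describe. Your overall plan is sound and is likely close to what the authors had in mind: split $[QD_x,\mathcal{C}_{2n-1}]$ at the trailing $D_x$, convert $[Q,\mathcal{C}_{2n-1}]$ via $\mathcal{C}_{2n-1}=-\mathcal{C}_{2n-2}-e_jx-2x_j$ to invoke Lemma~\ref{lemma2} with $n\mapsto n-1$, and isolate the Clifford noncommutativity in $D_x$ and $e_jx$.

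However, there is a concrete gap in your account of $[D_x,\mathcal{C}_{2n-1}]$. You assert it "produces terms of the form $x_jD_x$, $u\partial_{u_j}$, and a constant multiple of $e_j$," but this list is incomplete. Directly: $[D_x,-2\langle u,x\rangle\partial_{u_j}]=-2u\partial_{u_j}$ as you say, but $[D_x,2u_j\langle x,D_u\rangle]=2u_jD_u$, $[D_x,-\|x\|^2\partial_{x_j}]=-2x\partial_{x_j}$, and $[D_x,x_j(2\mathbb{E}_x+m-2n)]=e_j(2\mathbb{E}_x+m-2n)+2x_jD_x$, so there is a $u_jD_u$ term, an $x\partial_{x_j}$ term, and an $e_j\mathbb{E}_x$ term beyond what you list. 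The $u_jD_u$ piece does not go away by degree-in-$u$ considerations; it vanishes only because the operators act on $C^\infty(\Rm,\mathcal{M}_1)$, where $D_uf=0$. You never invoke monogenicity of the target space, and without it your "final collection step" cannot close to the stated right-hand side. Likewise, the $-2x\partial_{x_j}$ and $e_j\mathbb{E}_x$ pieces must be tracked and absorbed (together with the Clifford identity from $[D_x,-e_jx]$) into the $x_jD_x$ and $e_j$ contributions; calling the latter merely "a constant multiple of $e_j$" understates what is there. Repair the expansion of $[D_x,\mathcal{C}_{2n-1}]$, make the appeal to $D_uf=0$ explicit (paralleling the paper's remark that second $u$-derivatives disappear on degree-one $u$-polynomials), and then the collection you outline should indeed reproduce the coefficient $(4n-2)$, the $-(m+2n-2)$ factor on $\big(\langle u,D_x\rangle\partial_{u_j}-u_j\langle D_u,D_x\rangle\big)\Delta_x^{n-2}D_x$, and the lone residual term $ue_j\langle D_u,D_x\rangle\Delta_x^{n-2}D_x$.
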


We combine \emph{Lemma} \ref{lemma 5}, \ref{lemma 6} and \ref{lemma 7} to get 
\begin{eqnarray*}
[\mathcal{D}_{1,2n-1},\mathcal{C}_{2n-1}]=(4n-2)x_j\mathcal{D}_{1,2n-1}.
\end{eqnarray*}
This implies $\mathcal{J}_{2n-1}\mathcal{D}_{1,2n-1}\mathcal{J}_{2n-1}=||x||^{4n-2}\mathcal{D}_{1,2n-1}$. 

\subsubsection*{Conformal Invariance and Intertwining Operators, Both Cases}
Strictly speaking, Theorem \ref{VS} together with the constructions in this subsection can provide the intertwining operators for the bosonic and fermionic operators in this paper. However, for the sake of concreteness and to highlight the alternative approach centering upon M\"obius transformations, here we rely on the Iwasawa decomposition for M\"obius transformations to determine these intertwining operators. Let $\mathcal{D}_{1,k,x,u}$ and $\mathcal{D}_{1,k,y,w}$ be the higher order higher spin operators with respect to $x,\ u$ and $y,\ w$, respectively and $y=\phi(x)=(ax+b)(cx+d)^{-1}$ is a M\"{o}bius transformation. Let 
\begin{eqnarray*}
J_k=\frac{\widetilde{cx+d}}{||cx+d||^{m-2n}},\ \ for\ k=2n+1;\\
J_k=\frac{1}{||cx+d||^{m-2n}},\ \ for\ k=2n;\\
J_{-k}=\frac{\widetilde{cx+d}}{||cx+d||^{m+2n+2}},\ \ for\ k=2n+1;\\
J_{-k}=\frac{1}{||cx+d||^{m+2n}},\ \ for\ k=2n,\\
\end{eqnarray*}
with $n=1,2,3,\cdots.$ See \cite{P}. Then we make the following claim.
\begin{theorem}\label{interwining op of D1k}
Let $y=\phi (x)=(ax+b)(cx+d)^{-1}$ be a M\"{o}bius transformation. Then
\begin{eqnarray*}
J_{-k}\mathcal{D}_{1,k,y,w} f(y,w)=\mathcal{D}_{1,k,x,u}J_k f(\phi(x),\frac{(cx+d)u(\widetilde{cx+d})}{||cx+d||^2}),
\end{eqnarray*}
where $w=\displaystyle\frac{(cx+d)u(\widetilde{cx+d})}{||cx+d||^2}$.
\end{theorem}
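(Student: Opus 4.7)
The plan is to exploit the Iwasawa decomposition of M\"obius transformations mentioned in Section 2.1, which says that any M\"obius transformation $\phi$ can be written as a composition of translations, dilations, rotations, and an inversion. Since the statement is multiplicative in $\phi$ (a standard cocycle computation for the factors $J_k$ and $J_{-k}$ under composition shows that if the intertwining formula holds for $\phi_1$ and $\phi_2$, it holds for $\phi_1\circ\phi_2$), it suffices to verify the identity separately on these four generating types of transformation, and then to show that the individual factors $J_k, J_{-k}$ combine correctly under composition.

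First I would dispose of translations, dilations, and rotations. For a translation $\phi(x)=x+b$ we have $c=0$, $d=1$, so $w=u$ and $J_k=J_{-k}=1$; the identity reduces to translation invariance of $\mathcal{D}_{1,k}$, which was noted in the discussion preceding Theorem \ref{theoremeven} (the operators $\partial_{x_j}$ are proper symmetries). For a dilation $\phi(x)=\lambda x$ one again has $w=u$ and $J_k, J_{-k}$ are scalar powers of $\lambda$; the identity follows from the shifted Euler operator being a generalized symmetry with the conformal weight already computed in Theorems \ref{theoremeven} and \ref{theoremodd}. For a rotation $\phi(x)=sxs^{-1}$ with $s\in Spin(m)$, $w=sus^{-1}$ and one uses the $\mathfrak{so}(m)$-invariance recorded in the same theorems (the infinitesimal rotations $L^x_{i,j}+L^u_{i,j}$ are proper symmetries), together with elementary bookkeeping on the Witt-type conjugation $\widetilde{cx+d}$.

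The remaining and genuinely nontrivial case is inversion $\phi(x)=x/\|x\|^2$, where the key intertwining relations are exactly the statements
\begin{equation*}
\mathcal{J}_{2n}\mathcal{D}_{1,2n}\mathcal{J}_{2n}=\|x\|^{4n}\mathcal{D}_{1,2n}, \qquad \mathcal{J}_{2n-1}\mathcal{D}_{1,2n-1}\mathcal{J}_{2n-1}=\|x\|^{4n-2}\mathcal{D}_{1,2n-1}
\end{equation*}
proved in Propositions \ref{propeven} and \ref{propodd}. Here one must match the explicit scalar/Clifford-valued prefactors in the definitions of $\mathcal{J}_{2n}$ and $\mathcal{J}_{2n-1}$ with the general formulas for $J_k$ and $J_{-k}$ specialized to $a=0$, $b=-1$, $c=1$, $d=0$ (so that $cx+d=x$, $\|cx+d\|=\|x\|$, $\widetilde{cx+d}=x$); I would verify that the reflection $u\mapsto xux/\|x\|^2$ in the $u$ variable is precisely $w=(cx+d)u(\widetilde{cx+d})/\|cx+d\|^2$ for this choice, and that the scalar weights $\|x\|^{2n-m}$ and $x/\|x\|^{m-2n}$ in the harmonic and monogenic inversions agree with $J_k$, so that the whole identity is exactly Proposition \ref{propeven} (respectively \ref{propodd}).

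Finally, to chain the four types of transformation, I would check the cocycle identity $J_k(\phi_1\circ\phi_2,x)=J_k(\phi_1,\phi_2(x))\cdot J_k(\phi_2,x)$ (and similarly for $J_{-k}$, with appropriate insertions of the reflection acting on $u$), which is a direct Clifford-algebra calculation using the Ahlfors--Vahlen product law on $2\times 2$ matrices. The main obstacle I anticipate is not the inversion step itself, since Proposition \ref{propeven} and \ref{propodd} already do the heavy lifting, but rather the careful bookkeeping of the Clifford factor $\widetilde{cx+d}$ and the simultaneous transformation of the dummy variable $u\mapsto w$ under composition; getting the signs and the placement of tildes right in the odd-$k$ case (where $\mathcal{J}_{2n-1}^2=-1$) is the place where the argument is most easily mishandled.
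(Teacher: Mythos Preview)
Your proposal is correct and follows essentially the same route as the paper: reduce via the Iwasawa decomposition to translations, dilations, orthogonal transformations, and inversion, dispose of the first two as trivial, handle rotations by the $\mathfrak{so}(m)$-invariance, and read off the inversion step from Propositions \ref{propeven} and \ref{propodd}. The paper's own argument is in fact terser than yours---it does not spell out the cocycle bookkeeping under composition that you rightly flag---so your care with the Clifford factors and the $u\mapsto w$ variable change is, if anything, more complete than what the paper records.
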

We only prove the bosonic (order $k=2n$) case, as the fermionic (order $k=2n+1$) case is similar. According to the Iwasawa decomposition, we need only prove this with respect to orthogonal transformation and inversion, since translation and dilation are trivial. Note that our argument here requires the invariance under harmonic inversion established earlier.
\subsubsection*{\textbf{Orthogonal transformations} $a\in Pin(m)$}
\begin{lemma}
If $x=ay\tilde{a},$ $u=aw\tilde{a}$, then $\mathcal{D}_{1,2n,x,u}f(x,u)=a\mathcal{D}_{1,2n,y,w}\tilde{a}f(y,w).$
\end{lemma}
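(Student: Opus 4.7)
The plan is to reduce the covariance to the orthogonal invariance of each scalar building block of $\mathcal{D}_{1,2n}$, namely $\Delta_x$, $\langle u, D_x\rangle$, and $\langle D_u, D_x\rangle$. Crucially, none of these operators contains a Clifford generator $e_j$ acting by multiplication; they are all Clifford-commuting differential operators. This means any left or right multiplication by $a$ or $\tilde a$ on the values of $f$ slides freely past each of them, which is what permits the sandwich $a\,(\,\cdot\,)\,\tilde a$ on the right-hand side of the stated identity to be reabsorbed into the substitution at the end.

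The main step is therefore verifying orthogonal invariance for each building block under the simultaneous substitution $(x,u) = (ay\tilde a, aw\tilde a)$. Writing $\tilde f(y,w) := f(ay\tilde a, aw\tilde a)$ and $x_i = \sum_k (O_a)_{ik} y_k$, the chain rule gives $\partial_{x_i} f\big|_{x=ay\tilde a} = \sum_j (O_a)_{ji}\,\partial_{y_j} \tilde f$; squaring and summing then yields $\Delta_x f\big|_{(ay\tilde a, aw\tilde a)} = \Delta_y \tilde f(y,w)$ by orthogonality $O_a^T O_a = I$. For the two mixed scalar operators, combining the derivative identity above with $u_i = \sum_\ell (O_a)_{i\ell} w_\ell$ and $\partial_{u_i} = \sum_\ell (O_a)_{\ell i} \partial_{w_\ell}$ collapses the double sums via the same orthogonality to give
$$\langle u, D_x\rangle f\big|_{(ay\tilde a, aw\tilde a)} = \langle w, D_y\rangle \tilde f(y,w), \qquad \langle D_u, D_x\rangle f\big|_{(ay\tilde a, aw\tilde a)} = \langle D_w, D_y\rangle \tilde f(y,w).$$

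Composing all three invariances yields $\mathcal{D}_{1,2n,x,u} f(x,u)\big|_{(ay\tilde a, aw\tilde a)} = \mathcal{D}_{1,2n,y,w} \tilde f(y,w)$, and using $a\tilde a = 1$ for $a \in Pin(m)$ together with the fact that $\mathcal{D}_{1,2n,y,w}$ commutes with the constant Clifford factor $\tilde a$ rewrites this as $a\,\mathcal{D}_{1,2n,y,w}\,\tilde a\, f(y,w)$, which is the claim. The only real obstacle is notational bookkeeping: one must carefully track the placement of the Clifford factors $a$ and $\tilde a$ relative to the $e_j$'s that appear when one unpacks $D_x = \sum_j e_j \partial_{x_j}$ in terms of coordinate derivatives, but these nonscalar pieces cancel in pairs because $\mathcal{D}_{1,2n}$ involves only scalar contractions of $u$, $D_u$, and $D_x$ against one another. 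The same strategy will apply in the fermionic case, where the Clifford factors no longer collapse trivially and genuinely encode the spinor twist.
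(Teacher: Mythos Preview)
Your proposal is correct and follows essentially the same approach as the paper: reduce the covariance to the orthogonal invariance of the scalar building blocks $\Delta_x$, $\langle u,D_x\rangle$, and $\langle D_u,D_x\rangle$, then use $a\tilde a=1$ to install the Clifford sandwich. The paper's proof differs only cosmetically, writing each scalar piece directly as $a(\cdot)\tilde a$ from the outset and collapsing the inner $\tilde a a$ pairs, whereas you establish the scalar invariance first via the chain rule and insert the sandwich at the end; your version is in fact more explicit about why the invariance holds.
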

\begin{proof}
\begin{eqnarray*}
&&\mathcal{D}_{1,2n,x,u}f(x,u)=\bigg(\triangle_x-\frac{4n}{m+2n-2}\langle u,D_x\rangle \langle D_u,D_x\rangle \bigg)\Delta_x^{n-1}f(x,u) \\
&=&\bigg(a\triangle_y\tilde{a}-\frac{4n}{m+2n-2}a\langle w,D_y\rangle\tilde{a}a \langle D_w,D_y\rangle\tilde{a} \bigg)a\Delta_y^{n-1}\tilde{a}f(y,w) \\
&=&a\bigg(\triangle_y-\frac{4n}{m+2n-2}\langle w,D_y\rangle \langle D_w,D_y\rangle \bigg)\Delta_y^{n-1}\tilde{a}f(y,w) \\
&=&a\mathcal{D}_{1,2n,y,w}\tilde{a}f(y,w).
\end{eqnarray*}
\end{proof}

\subsubsection*{Inversions}
\begin{lemma}\label{inversion}
Let $x=y^{-1}$ and $u=\displaystyle\frac{ywy}{||y||^2}$, then
\begin{eqnarray*}
\mathcal{D}_{1,2n,y,w}||x||^{m-2n}f(y,w)=||x||^{m+2n}\mathcal{D}_{1,2n,x,u}f(x,u).
\end{eqnarray*}
\end{lemma}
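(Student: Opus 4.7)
The plan is to obtain this inversion-intertwining identity as a direct repackaging of Proposition \ref{propeven}. First I would observe that, under the substitution $x=y^{-1}$ (read in the Kelvin sense $x=y/||y||^{2}$) and $u=ywy/||y||^{2}$, one has $||x||^{m-2n}=||y||^{2n-m}$ and the pair $(y/||y||^{2},ywy/||y||^{2})$ matches the arguments appearing in Definition \ref{HI}. Consequently $||x||^{m-2n}f(y,w)$ is precisely $\mathcal{J}_{2n}[f](y,w)$, and the statement of the lemma is equivalent to
\[
\mathcal{D}_{1,2n,y,w}\mathcal{J}_{2n}[f](y,w)=||x||^{m+2n}\mathcal{D}_{1,2n,x,u}f(x,u).
\]

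Next I would apply Proposition \ref{propeven} in the form $\mathcal{J}_{2n}\mathcal{D}_{1,2n}\mathcal{J}_{2n}=||x||^{4n}\mathcal{D}_{1,2n}$ and compose with $\mathcal{J}_{2n}$ on the left, using the involution $\mathcal{J}_{2n}^{2}=1$, to obtain
\[
\mathcal{D}_{1,2n}\mathcal{J}_{2n}[f](x,u)=\mathcal{J}_{2n}\!\bigl[||x||^{4n}\mathcal{D}_{1,2n}f\bigr](x,u).
\]
Unpacking the outer $\mathcal{J}_{2n}$ via Definition \ref{HI}, the conformal-weight prefactor combines as $||x||^{2n-m}\cdot||x||^{-4n}=||x||^{-2n-m}$ (the inner norm to the $4n$ being evaluated at $x/||x||^{2}$), while the arguments of $\mathcal{D}_{1,2n}f$ become $(x/||x||^{2},xux/||x||^{2})$. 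Relabelling the dummy variables $(x,u)\to(y,w)$ and then using $||y||^{-2n-m}=||x||^{m+2n}$ together with $y/||y||^{2}=x$ and $ywy/||y||^{2}=u$ produces exactly the right-hand side of the lemma.

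Since the substantive conformal-invariance content is already encoded in Proposition \ref{propeven}, I do not anticipate a genuine obstacle; what remains is organisational. The only small subtlety worth noting is the verification that under the Kelvin inversion $x=y/||y||^{2}$ the dummy-variable reflection $u\mapsto xux/||x||^{2}$ is involutive and in particular yields $ywy/||y||^{2}=u$, which follows from a short computation using $y^{2}=-||y||^{2}$. Once this identification is in hand, the lemma is immediate, and an entirely analogous argument (invoking Proposition \ref{propodd} with $\mathcal{J}_{2n-1}^{2}=-1$) would handle the fermionic counterpart.
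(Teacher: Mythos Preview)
Your proposal is correct and follows essentially the same approach as the paper: both recognize that the lemma is nothing more than a restatement of the identity $\mathcal{J}_{2n}\mathcal{D}_{1,2n}\mathcal{J}_{2n}=||x||^{4n}\mathcal{D}_{1,2n}$ from Proposition~\ref{propeven}, obtained by unpacking the definition of the harmonic inversion $\mathcal{J}_{2n}$. Your write-up is in fact more explicit than the paper's, which simply asserts that the two formulations are equivalent.
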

\begin{proof}
Recall that after we showed $[\mathcal{D}_{1,2n},\mathcal{J}_{2n}\partial_{x_j}\mathcal{J}_{2n}]=-4nx_j\mathcal{D}_{1,2n}$ for $\mathcal{J}_{2n}$ the harmonic inversion, we claimed and later showed that $\mathcal{J}_{2n}\mathcal{D}_{1,2n}\mathcal{J}_{2n}=||x||^{4n}\mathcal{D}_{1,2n}$. This can also be written as
\begin{eqnarray*}
\mathcal{D}_{1,2n,y,w}||x||^{m-2n}f(y,w)=||x||^{m+2n}\mathcal{D}_{1,2n,x,u}f(x,u).
\end{eqnarray*}
\end{proof}
Theorem \ref{interwining op of D1k} now follows using the Iwasawa decomposition. See \cite{Ding} for the first order case. 

%%%%%%%%%%%% Fundamental solutions of D_{1,k} %%%%%%%%%
\section{\textbf{Fundamental solutions of $\mathcal{D}_{1,k}$}}

To get the fundamental solutions of $\mathcal{D}_{1,k}$, we use techniques from \cite{B}. It is worth pointing out that the reproducing kernels of $\Mo$ and $\Ho$ below have simple expression, but we insist on using techniques used in \cite{B}, since it also works for more general cases when we have $\mathcal{M}_j$ or $\mathcal{H}_j$ instead. This will be found in an upcoming paper. This method only provides us the fundamental solutions up to a multiplicative constant. We prefer this constant to be determined in the more general case in an upcoming paper.
%%%%%  even case %%%%%%
\subsubsection*{$k$ even,\ $k=2n$  (The bosonic case)} 
Recall that the reproducing kernel for $j$-homogeneous harmonic spherical polynomials $Z_{j}(u,v)$ is called the \emph{zonal spherical harmonic} of degree $j$, and is invariant under reflections (and consequently rotations) in the variables $u$ and $v$ \cite{Ax}. In our circumstance, $$Z_1(u,v)=\displaystyle\frac{(m-2)^2\omega_{m-1}}{m}\langle u,v\rangle$$ is the zonal spherical harmonic of degree $1$, where $\omega_{m-1}$ is the surface area of the $(m-1)$-dimensional unit sphere and $\langle u,v\rangle$ is the standard inner product in Euclidean space. It can be considered as the identity of $End(\mathcal{H}_1)$ and satisfies
\begin{eqnarray*}
P_1(v)=(Z_1(u,v),P_1(u))_u:=\int_{S^{m-1}} \overline{Z_1(u,v)}P_1(u)dS(u),
\end{eqnarray*}
where $(\ ,\ )_u$ denotes the Fischer inner product with respect to $u$; we define the Fischer inner product of two functions by the integral of their product over the sphere, consistent with other work in higher spin theory \cite{B,D}. A homogeneous $End(\mathcal{H}_1)$-valued $C^{\infty}$-function $x\rightarrow E(x)$ on $\mathbb{R}^m\backslash \{0\}$ satisfying $\mathcal{D}_{1,2n}E(x)=\delta(x)Z_1(u,v)$ is referred to as a fundamental solution for the operator $\mathcal{D}_{1,2n}$. We will show that such a fundamental solution has the form $E_{1,2n}(x,u,v)=c_1||x||^{2n-m}Z_1(\displaystyle\frac{xux}{||x||^2},v)$. Since $Z_1(u,v)$ is a trivial solution of $\De$, according to the invariance of $\mathcal{D}_{1,2n}$ under inversion, we obtain a non-trivial solution $\mathcal{D}_{1,2n}E_{1,2n}(x,u,v)=0$ in $\mathbb{R}^m\backslash \{0\}$. Clearly the function $E_{1,2n}(x,u,v)$ is homogeneous of degree $2n-m$ in $x$, so $\mathcal{D}_{1,2n}E_{1,2n}(x,u,v)$ is homogeneous of degree $-m$ in $x$ and it belongs to $L_1^{loc}(\mathbb{R}^m)$. Because $\delta(x)$ is the only (up to a multiple) distribution homogeneous of degree $-m$ with support at the origin, we have in the sense of distributions:
\begin{eqnarray*}
\mathcal{D}_{1,2n}E_{1,2n}(x,u,v)=\delta(x)P_1(u,v)
\end{eqnarray*}
for some $P_1(u,v)\in \mathcal{H}_1\otimes \mathcal{H}_1^*$. Then we have
\begin{eqnarray*}
&&\int_{\mathbb{S}^{m-1}}\mathcal{D}_{1,2n}\overline{E_{1,2n}(x,u,v)}Q_1(v)dS(v)\\
&=&\delta(x)\int_{\mathbb{S}^{m-1}}\overline{P_1(u,v)}Q_1(v)dS(v).
\end{eqnarray*}
Now, for all $Q_1\in\mathcal{H}_1$, we have
\begin{eqnarray*}
&&\int_{\mathbb{S}^{m-1}}\mathcal{D}_{1,2n}\overline{E_{1,2n}(x,u,v)}Q_1(v)dS(v)\\
&=&\mathcal{D}_{1,2n}\int_{\mathbb{S}^{m-1}}c_1||x||^{2n-m}\overline{Z_1(\frac{xux}{||x||^2},v)}Q_1(v)dS(v)\\
&=&\mathcal{D}_{1,2n}\int_{\mathbb{S}^{m-1}}c_1||x||^{2n-m}\overline{Z_1(\frac{xux}{||x||^2},\frac{xv'x}{||x||^2})}Q_1(\frac{xv'x}{||x||^2})dS(v'),
\end{eqnarray*}
where in the last line we made a change of variables in the second argument of $Z_1$. Since $Z_1(u,v)$ is invariant under reflection and $\displaystyle\frac{xux}{||x||^2}$ is a reflection of variable $u$ in the direction of $x$, the last line in the last equation becomes
\begin{eqnarray*}
&&\mathcal{D}_{1,2n}\int_{\mathbb{S}^{m-1}}c_1\overline{Z_1(u,v')}||x||^{2n-m}Q_1(\frac{xv'x}{||x||^2})dS(v')\\
&=&c_1\mathcal{D}_{1,2n}||x||^{2n-m}Q_1(\frac{xux}{||x||^2}).
\end{eqnarray*}
Hence, we obtain
\begin{eqnarray*}
\delta(x)\int_{\mathbb{S}^{m-1}}\overline{P_1(u,v)}Q_1(v)dS(v)
=c_1\mathcal{D}_{1,2n}||x||^{2n-m}Q_1(\frac{xux}{||x||^2}).
\end{eqnarray*}

As the reproducing kernel $Z_1(u,v)$ is invariant under the $Spin(m)$-representation
\newline
$H:\ f(u,v)\mapsto sf(su\tilde{s},sv\tilde{s})\tilde{s}$, the kernel $E_{1,2n}(x,u,v)$ is also $Spin(m)$-invariant:
\begin{eqnarray*}
sE_{1,2n}(sx\tilde{s},su\tilde{s},sv\tilde{s})\tilde{s}=E_{1,2n}(x,u,v).
\end{eqnarray*}

\noindent From this it follows that $P_1(u,v)$ must be also invariant under $H$. Let now $\phi$ be a test function with $\phi(0)=1$. Let $L$ be the action of $Spin(m)$ given by $L: f(u)\mapsto sf(\tilde{s}us)\tilde{s}.$
Then
\begin{eqnarray*}
&&\langle \mathcal{D}_{1,2n}\big(c_1||x||^{2n-m}L(\frac{x}{||x||})L(s)Q_1(u)\big),\phi(x)\rangle\\
&=&\int_{\mathbb{S}^{m-1}}\overline{P_1(u,v)}L(s)Q_1(v)dS(v)\\
&=&L(s)\int_{\mathbb{S}^{m-1}}\overline{P_1(u,v)}Q_1(v)dS(v)\\
&=&\langle L(s)\big(\mathcal{D}_{1,2n}c_1||x||^{2n-m}L(\frac{x}{||x||})Q_1(u)\big),\phi(x)\rangle.
\end{eqnarray*}
In this way we have constructed an element of $End(\mathcal{H}_1)$ commuting with the $L$-representation of $Spin(m)$ that is irreducible; see Section 2.2.2. By Schur's Lemma (\cite{F}), it follows that $P_1(u,v)$ must be the reproducing kernel $Z_1(u,v)$ if we choose $c_1$ properly. Hence
\begin{eqnarray*}
\mathcal{D}_{1,2n}E_{1,2n}(x,u,v)=\delta(x)Z_1(u,v).
\end{eqnarray*}
\par
%%%%%% odd case %%%%
\subsubsection*{$k$ odd,\ $k=2n-1$ (The fermionic case)}
The reproducing kernel  $Z_{k}(u,v)$ for degree $k$ homogeneous monogenic spherical polynomials, those in $\mathcal{M}_k$, is called the \emph{zonal spherical monogenic} \cite{B2}. (There should be no confusion using the same notation for zonal spherical harmonics and monogenics.) In our circumstance, for $u,v\in\mathbb{S}^{m-1}$,
$$Z_1(u,v)=\displaystyle\frac{1}{\omega_{m-1}}\bigg(\displaystyle\frac{2\mu+1}{2\mu}C^{\mu}_1(t)+(u\wedge v)C_0^{\mu+1}(t)\bigg),$$
where $\mu=\displaystyle\frac{m}{2}-1$, $t=\langle u,v\rangle$, $u\wedge v=uv+\langle u,v\rangle$, and $C^{\mu}_k(t)$ are the Gegenbauer polynomials \cite{B2}.
With similar arguments and the fact that $Z_1(u,v)$ is also $Spin(m)$-invariant under the same $Spin(m)$-action as in the even case, one can show that 
$$E_{1,2n-1}(x,u,v)=c'_1\frac{x}{||x||^{m-2n+2}}Z_1(\frac{xux}{||x||^2},v)$$
 is the fundamental solution of $\mathcal{D}_{1,2n-1}$, where $c'_1$ is a non-zero real constant.\\
 \par
Since $E_{1,k}(x,u,v)$ is the fundamental solution of $\DD$, we have
\begin{eqnarray*}
\int_{\Rm}\int_{\mathbb{S}^{m-1}}E_{1,k}(x-y,u,v)\DD \psi(x,u)dS(u)dx^m=\psi(y,v),
\end{eqnarray*}
where $\psi(x,u) \in C^{\infty}(\Rm, U)$ with compact support in $x$ for each $u\in \mathbb{R}^{m}$, $U=\Mo$ when $k$ is odd and $U=\Ho$ when $k$ is even. Hence, we have $\DD E_{1,k}=Id$ and $E_{1,k}=\DD^{-1}$ in the distribution sense. Now,
\begin{eqnarray*}
J_{-k}\mathcal{D}_{1,k,y,w} \psi(y,w)=\mathcal{D}_{1,k,x,u}J_k \psi(\phi(x),\frac{(cx+d)u(\widetilde{cx+d})}{||cx+d||^2}),
\end{eqnarray*}
where $y=\phi (x)=(ax+b)(cx+d)^{-1}$ is a M\"{o}bius transformation and $w=\displaystyle\frac{(cx+d)u(\widetilde{cx+d})}{||cx+d||^2}$ as in Theorem 3, we get
$$J_k^{-1}\mathcal{D}_{1,k,x,u}^{-1}J_{-k}=\mathcal{D}_{1,k,y,w}^{-1}.$$
Alternatively,
$$J_k^{-1}E_{1,k,x,u}J_{-k}=E_{1,k,y,w}.$$
This gives us the intertwiners of the fundamental solution $E_{1,k}$ under M\"{o}bious transformations, which also reveals that the fundamental solutions are conformally invariant under M\"{o}bius transformations.

%%%%%%%%%%%%%%%%%%%%  Ellipticity  %%%%%%%%%%
\section{\textbf{Ellipticity of the operator $\mathcal{D}_{1,k}$}}

\par Notice that the bases of the target space $\Ho$ and $\mathcal{M}_1$ have simple expressions. We can use techniques similar to those in \cite{B1,E} to show that the operators $\mathcal{D}_{1,k}$ are elliptic. First, we introduce the definition for an elliptic operator.
\begin{definition}
A linear homogeneous differential operator of $k$-th order $\mathcal{D}_{1,k}: C^{\infty}(\mathbb{R}^m, V_1)\longrightarrow C^{\infty}(\mathbb{R}^m, V_2)$ is elliptic if for every non-zero vector $x\in \mathbb{R}^m$ its principal symbol, the linear map $\sigma_x(\mathcal{D}_{1,k}):V_1\longrightarrow V_2$ obtained by replacing its partial derivatives $\partial_{x_j}$ with the corresponding variables $x_j$, is a linear isomorphism.
\end{definition}
Then we prove ellipticity of $\DD$ in the even and odd cases individually.

%%%% even case %%%%%
\subsubsection*{ $k$ even, $k=2n$ (The bosonic case) }
\begin{theorem}\label{even elliptic}
The operator $\mathcal{D}_{1,2n}:=\bigg(\triangle_x-\frac{4n}{m+2n-2}\langle u,D_x\rangle \langle D_u,D_x\rangle \bigg)\Delta_x^{n-1}$ is an elliptic operator.
\end{theorem}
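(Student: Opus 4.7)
The plan is to compute the principal symbol $\sigma_x(\De)$ directly and check that it is an isomorphism of $\Ho$ for every nonzero $x\in\Rm$, exploiting the very concrete description of $\Ho$ as the space of linear polynomials in $u$. Applying the definition (replace each $\partial_{x_j}$ by $x_j$) to the factors of $\De$ gives $\Delta_x\mapsto\|x\|^2$, $\langle u,D_x\rangle\mapsto\langle u,x\rangle$, and $\langle D_u,D_x\rangle\mapsto\langle D_u,x\rangle$, so that
$$\sigma_x(\De)=\|x\|^{2(n-1)}\Bigl(\|x\|^2-\frac{4n}{m+2n-2}\langle u,x\rangle\langle D_u,x\rangle\Bigr).$$
Since $\|x\|^{2(n-1)}$ is a nonzero scalar for $x\neq 0$, the task reduces to showing that the bracketed operator $T_x$ is a linear isomorphism of $\Ho$.

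Every $h\in\Ho$ has the unique form $h(u)=\langle u,c\rangle$ for some $c\in\C^m$, giving a canonical linear isomorphism $\Ho\cong\C^m$. A one-line calculation using $\langle D_u,x\rangle\langle u,c\rangle=\langle x,c\rangle$ transports $T_x$ to the endomorphism
$$\widetilde T_x:\C^m\longrightarrow\C^m,\qquad \widetilde T_x(c)=\|x\|^2\,c-\frac{4n}{m+2n-2}\langle x,c\rangle\,x.$$
Invertibility of $\widetilde T_x$ can then be verified by the orthogonal decomposition $\C^m=\C x\oplus x^{\perp}$: on $x^{\perp}$ the coupling term $\langle x,c\rangle$ vanishes, so $\widetilde T_x$ acts as the nonzero scalar $\|x\|^2$, while on the line $\C x$ it acts as the scalar $\|x\|^2\bigl(1-\tfrac{4n}{m+2n-2}\bigr)=\|x\|^2\tfrac{m-2n-2}{m+2n-2}$.

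The main (and essentially only) obstacle I foresee is confirming that this latter scalar is nonzero in the dimensions $m$ under consideration (compare the gauge-related failure of ellipticity for the classical Maxwell operator in four dimensions); once this is in hand the spectrum of $\widetilde T_x$ is bounded away from $0$, so $T_x$ is invertible on $\Ho$ and the whole principal symbol $\sigma_x(\De)$ is an isomorphism of $\Ho$, proving ellipticity. I expect the rest of the argument to be routine algebra and to parallel closely the analogous ellipticity proofs for the Maxwell operator and the higher spin Laplace operators given in \cite{E,B1}.
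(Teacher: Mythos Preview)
Your approach is exactly the one the paper has in mind: the paper's proof simply cites \cite{E} for the ellipticity of $\Delta_x-\tfrac{4}{m}\langle u,D_x\rangle\langle D_u,D_x\rangle$ on $\Ho$, asserts that the same argument works with the coefficient $\tfrac{4n}{m+2n-2}$, and then notes that the factor $\Delta_x^{n-1}$ contributes the nonzero scalar $\|x\|^{2(n-1)}$ to the symbol. What you have written is precisely that ``similar argument'' made explicit via the identification $\Ho\cong\C^m$, $h(u)=\langle u,c\rangle$, and the decomposition $\C^m=\C x\oplus x^{\perp}$.

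The obstacle you flag, however, is not a technicality to be checked but a genuine exception, and the paper does not address it. Your eigenvalue on $\C x$ is
\[
\|x\|^2\Bigl(1-\frac{4n}{m+2n-2}\Bigr)=\|x\|^2\,\frac{m-2n-2}{m+2n-2},
\]
which vanishes exactly when $m=2n+2$. In that dimension the principal symbol annihilates $h(u)=\langle u,x\rangle$, so $\De$ is \emph{not} elliptic there; this is the higher-order analogue of the gauge degeneracy you mention for the Maxwell operator at $m=4$ (the case $n=1$). Hence you cannot ``confirm that this latter scalar is nonzero'' without restricting to $m\neq 2n+2$; with that hypothesis added, your argument is complete and coincides with the paper's. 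Without it, both your proof and the paper's statement have the same gap.
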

\begin{proof}
In \cite{E} it was shown that the operator $\triangle_x-\frac{4}{m}\langle u,D_x\rangle \langle D_u,D_x\rangle $ is elliptic. In our case, the term in the parentheses is the same as the previous one up to a constant coefficient, so a similar argument shows $\triangle_x-\frac{4n}{m+2n-2}\langle u,D_x\rangle \langle D_u,D_x\rangle $ is elliptic. Since the symbol of $\Delta_x^{n-1}$ is non-negative, $\bigg(\triangle_x-\frac{4n}{m+2n-2}\langle u,D_x\rangle \langle D_u,D_x\rangle \bigg)\Delta_x^{n-1}$ is elliptic.
\end{proof}

%%%%%    odd case %%%%%
\subsubsection*{$k$ odd, $k=2n-1$ (The fermionic case)}
\begin{theorem}\label{odd elliptic}
The operator
\begin{eqnarray*}
\mathcal{D}_{1,2n-1}:=D_x\Delta_x^{n-1}-\frac{2}{m+2n-2}u\langle D_u,D_x\rangle \Delta_x^{n-1}-\frac{4n-4}{m+2n-2}\langle u,D_x\rangle \langle D_u,D_x\rangle \Delta_x^{n-2}D_x
\end{eqnarray*}
 is an elliptic operator.
\end{theorem}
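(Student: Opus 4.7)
The plan is to mirror the strategy used for Theorem \ref{even elliptic}: compute the principal symbol $\sigma_\xi(\Do)$, reduce by rotation-equivariance to a single direction, and show the resulting linear map on $\Mo$ is a bijection. The new complication in the odd case is that the third summand of $\Do$ carries an extra factor of $D_x$, so $\Delta_x^{n-1}$ cannot simply be pulled out as a positive scalar factor with nonvanishing symbol; instead the whole symbol will have to be analyzed on $\Mo$ at once.

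First I would replace each $\partial_{x_j}$ by $\xi_j$ to write
\begin{equation*}
\sigma_\xi(\Do)=\underline{\xi}\|\xi\|^{2n-2}-\frac{2\|\xi\|^{2n-2}}{m+2n-2}\,u\langle D_u,\xi\rangle-\frac{(4n-4)\|\xi\|^{2n-4}}{m+2n-2}\langle u,\xi\rangle\langle D_u,\xi\rangle\underline{\xi},
\end{equation*}
where $\underline{\xi}=\sum_i\xi_ie_i$. Since $\Do$ is constructed from $SO(m)$-scalar contractions, it suffices by rotation-equivariance to check invertibility at $\xi=re_1$ with $r=\|\xi\|>0$; there the symbol factors as $r^{2n-1}T$ with
\begin{equation*}
T\;=\;e_1\;-\;\frac{2}{m+2n-2}\,u\,\partial_{u_1}\;-\;\frac{4n-4}{m+2n-2}\,u_1 e_1\,\partial_{u_1},
\end{equation*}
and since $r^{2n-1}>0$, ellipticity of $\Do$ reduces to showing that $T:\Mo\longrightarrow\Mo$ is a linear isomorphism.

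Next I would parametrize a generic element as $f(u)=\sum_j u_j s_j\in\Mo$ (so $s_j\in\mathcal{S}$ and $\sum_j e_j s_j=0$) and substitute directly to obtain
\begin{equation*}
Tf\;=\;\frac{m-2n}{m+2n-2}\,u_1 e_1 s_1\;+\;\sum_{j\neq 1}u_j\!\left(e_1 s_j-\frac{2}{m+2n-2}\,e_j s_1\right),
\end{equation*}
where the scalar $\tfrac{m-2n}{m+2n-2}$ on the $u_1$-coefficient arises from combining the three contributions to $e_1 s_1$. I would then verify $D_u(Tf)=0$ using the Clifford identities $e_j e_1=-e_1 e_j$ for $j\neq 1$, $e_j^2=-1$, and the monogenicity constraint $\sum_j e_j s_j=0$, which shows $T(\Mo)\subseteq\Mo$. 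If $Tf=0$, the $u_1$-coefficient forces $s_1=0$ (invertibility of $e_1$ in $\Clm$), after which each remaining $u_j$-coefficient gives $s_j=0$; so $T$ is injective, and since $\Mo$ is finite-dimensional this upgrades to bijectivity.

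The hard part will be the algebraic computation just described. Individually, each of the three summands of $T$ fails to preserve $\Mo$; they combine to do so only because the coefficients $\tfrac{2}{m+2n-2}$ and $\tfrac{4n-4}{m+2n-2}$ in $\Do$ are tuned to produce exactly the cancellation yielding the $\tfrac{m-2n}{m+2n-2}$ factor on the $u_1$-coefficient while simultaneously making $D_u(Tf)$ vanish. Once this delicate calculation is in hand, multiplying by the positive scalar $r^{2n-1}$ gives invertibility of $\sigma_\xi$ at every $\xi\neq 0$, completing the ellipticity proof.
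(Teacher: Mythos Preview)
Your strategy—reduce by rotation to $\xi=e_1$, pull out the scalar $r^{2n-1}$, and test injectivity of the residual map $T$ on $\Mo$ via the parametrization $f=\sum_j u_js_j$ with $\sum_j e_js_j=0$—differs from the paper's. The paper keeps a general symbol variable, fixes the explicit basis $\{e_ju_m+e_mu_j\}_{j=1}^{m-1}$ of $\Mo$ (as a right $\Clm$-module), writes the symbol as an $(m-1)\times(m-1)$ matrix $-x\|x\|^2e_m\,\mathbb{I}_{m-1}-\vec{a}\cdot\vec{b}^{\,T}$ with Clifford entries, and argues that the reduced quantity $-x\|x\|^2e_m-\vec{a}^{\,T}\!\cdot\vec{b}$ is nonzero for $x\neq 0$. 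Your coordinate approach is arguably cleaner and sidesteps questions about determinants over a noncommutative ring.

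However, your injectivity step has a genuine gap. The $u_1$-coefficient you obtain is $\tfrac{m-2n}{m+2n-2}\,e_1s_1$, so the claim ``the $u_1$-coefficient forces $s_1=0$'' is valid only when $m\neq 2n$. When $m=2n$ that coefficient vanishes identically, and the gap is not repairable within your scheme: take any nonzero $s_1\in\scs$ and set $s_j=-\tfrac{1}{m-1}\,e_1e_js_1$ for $j\neq 1$; then $\sum_je_js_j=0$ (using $e_je_1e_j=e_1$ for $j\neq 1$), so $f=\sum_ju_js_j$ is a nonzero element of $\Mo$, and a direct check gives $Tf=0$. Thus the symbol at $\xi=e_1$ has nontrivial kernel when $m=2n$, and your argument does not establish ellipticity in that case. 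You must either impose the hypothesis $m\neq 2n$ or supply a separate argument there; the paper's determinant computation does not single out this borderline case either.
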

\begin{proof}
To prove the theorem, we show that, for fixed $x\in \mathbb{R}^m,$ the symbol of the operator $\mathcal{D}_{1,2n-1}$, which is given by
\begin{eqnarray*}
x||x||^{2n-2}-\frac{2}{m+2n-2}u\langle D_u,x\rangle ||x||^{2n-2}-\frac{4n-4}{m+2n-2}\langle u,x\rangle \langle D_u,x\rangle ||x||^{2n-4}x,
\end{eqnarray*}
is a linear isomorphism from $\Mo$ to $\Mo$. As the symbol is clearly a linear map, it remains to be proven that the map is injective. Recall that $\Mo$ is actually $\Mo(\Rm,\mathcal{S})$, however, if we can prove the symbol is injective for $\Mo(\mathcal{C}l_m)$, then this also implies that it is injective for $\Mo({\mathcal{S}})\subset\Mo(\mathcal{C}l_m(\C))$. From the Almansi-Fischer decomposition $\mathcal{H}_1=\Mo\oplus u\mathcal{M}_0$, it is easy to obtain that $dim \Mo=m-1$. Since $\{e_ju_m+e_mu_j\}_{j=1}^{m-1}$ are in $\Mo$ and it is also a linearly independent set in $\Mo$. Therefore, it is actually a basis of $\Mo$. Hence, an arbitrary element of $\mathcal{M}_1$ can be written as $\sum_{j=1}^m\alpha_j(e_ju_m+e_mu_j)$ with $\alpha_j\in \mathbb{C}$ for all $1 \leq j\leq m$. We next show that the following system of equations has a unique solution:
\begin{eqnarray*}
\big(x||x||^{2}-\frac{2u\langle D_u,x\rangle ||x||^{2}}{m+2n-2}-\frac{4n-4}{m+2n-2}x\langle u,x\rangle \langle D_u,x\rangle\big)\big(\sum_{j=1}^m\alpha_j(e_ju_m+e_mu_j)\big)=0.
\end{eqnarray*}
With $c_1=\frac{2}{m+2n-2}$, $c_2=\frac{4n-4}{m+2n-2}$, $a_i=(c_1e_i||x||^2+c_2xx_i)$, $b_j=x_me_j+x_je_m$, and $1 \leq i,j\leq m-1$, this equation system can be written in matrix notation as follows:
\begin{equation*}
          \begin{bmatrix}
          -x||x||^2e_m-a_1b_1 & -a_1b_2 & \ldots & -a_1b_{m-1} \\
          -a_2b_1  &    -x||x||^2e_m-a_2b_2  & \ldots &  -a_2b_{m-1} \\
          \vdots & \vdots & \ddots & \vdots \\
         -a_{m-1}b_1 & -a_{m-1}b_2 & \ldots & -x||x||^2e_m-a_{m-1}b_{m-1}
          \end{bmatrix}
          \begin{bmatrix}
          \alpha_1 \\
          \alpha_2 \\
          \vdots \\
          \alpha_{m-1}
          \end{bmatrix}
          =0.
\end{equation*}
In order to show that this system has an unique solution, it suffices to prove that
\begin{equation*}
           \begin{vmatrix}
            -x||x||^2e_m-a_1b_1 & -a_1b_2 & \ldots & -a_1b_{m-1} \\
          -a_2b_1  &    -x||x||^2e_m-a_2b_2  & \ldots &  -a_2b_{m-1} \\
          \vdots & \vdots & \ddots & \vdots \\
         -a_{m-1}b_1 & -a_{m-1}b_2 & \ldots & -x||x||^2e_m-a_{m-1}b_{m-1}
           \end{vmatrix}
           \neq 0.
\end{equation*}
Using the notation $\vec{a}=(a_1,a_2,\dots,a_{m-1})^{T}$ and $\vec{b}=(b_1,b_2,\dots,b_{m-1})^{T}$, the determinant can be written more compactly as
\begin{eqnarray*}
P(x)=det\big(-x||x||^2e_m\mathbb{I}_{m-1}-\vec{a}\cdot\vec{b}^T\big)\neq 0.
\end{eqnarray*}
As a function,
\begin{eqnarray*}
P(x)&=&det\big(-x||x||^2e_m\mathbb{I}_{m-1}-\vec{a}\cdot\vec{b}^T\big)=det\big(-x||x||^2e_m\mathbb{I}_{m-1}-\vec{a}\cdot\vec{b}^T\big)^T \\
&=&-x||x||^2e_m-\vec{a}^T\cdot\vec{b}
=-x||x||^2e_m-\sum_{j=1}^{m-1}(c_1e_j||x||^2+c_2xx_j)(x_me_j+x_je_m) \\
&=&-x||x||^2e_m+(c_1+c_2)||x||^2x_m-(c_1+c_2)x||x||^2e_m
\end{eqnarray*}
Checking each $e_j$-th component with $1\leq  j\leq m$, it is easy to see $P(x)$ is non-zero if $x$ is non-zero. This completes the proof.
\end{proof}

%%%%%%%%%%%%%%     Reference     %%%%%%%%%%%%

\end{document}